\newcommand{\tikzAngleOfLine}{\tikz@AngleOfLine}
\def\tikz@AngleOfLine(#1)(#2)#3{%
\pgfmathanglebetweenpoints{%
\pgfpointanchor{#1}{center}}{%
\pgfpointanchor{#2}{center}}
\pgfmathsetmacro{#3}{\pgfmathresult}%
}
\newcommand{\bN}{\mathbb{N}}
\newcommand{\wT}{\widetilde{T}}
\newcommand{\La}{\Lambda}
\newcommand{\cF}{\mathcal{F}}
\newcommand{\cS}{\mathcal{S}}
\newcommand{\dd}{\operatorname{domdim}}
\newcommand{\spd}{{}^\diamond\!}
\newcommand{\End}{\operatorname{End}}
\newcommand{\Hom}{\operatorname{Hom}}
\newcommand{\Ext}{\operatorname{Ext}}
\newcommand{\add}{\!\operatorname{add}}
\newcommand{\m}{\!\operatorname{-mod}}
\newcommand{\proj}{\!\operatorname{-proj}}
\newcommand{\St}{\Delta}
\newcommand{\Cs}{\nabla}
\renewcommand{\L}{\Lambda}
\renewcommand{\l}{\lambda}
\newcommand{\pri}{\mathfrak{p}}
\newcommand{\mi}{\mathfrak{m}}
\newcommand{\Stsim}{\tilde{\St}}
\newcommand{\characteristic}{\operatorname{char}}
\newcommand{\Cssim}{\tilde{\Cs}}
\newcommand{\injdim}{\operatorname{idim}}
\newcommand{\domdim}{\!\operatorname{-domdim}}
\newcommand{\R}{\operatorname{R}}
\newcommand{\HN}{\operatorname{HNdim}}
\newcommand{\codomdim}{\!\operatorname{-codomdim}}
\newcommand{\inj}{\!\operatorname{-inj}}
\newcommand{\cQ}{\add_A Q}
\newtheorem{numberingthm}{Theorem}[subsection] 
\theoremstyle{definition}
\newtheorem{Def}[numberingthm]{Definition}
\newtheorem{example}[numberingthm]{Example}
\theoremstyle{plain}
\newtheorem{Prop}[numberingthm]{Proposition}
\newtheorem{Theorem}[numberingthm]{Theorem}
\newtheorem{Cor}[numberingthm]{Corollary}
\newtheorem{Lemma}[numberingthm]{Lemma}
\newtheorem{Remark}[numberingthm]{Remark}
\newtheorem{thmintroduction}{Theorem}
\theoremstyle{remark}
\begin{document}

\baselineskip=14pt

\title[Quasi-hereditary covers of Temperley-Lieb algebras]{Quasi-hereditary covers of Temperley-Lieb algebras and relative dominant dimension}

\author[T. Cruz]{Tiago Cruz}
\address[Tiago Cruz]{Institut f\"ur Algebra und Zahlentheorie,
Universit\"at Stuttgart, Germany }

\email{tiago.cruz@mathematik.uni-stuttgart.de}
\curraddr{Max-Planck-Institut f\"ur Mathematik, Vivatsgasse 7, 53111 Bonn, Germany}

\author[K. Erdmann]{Karin Erdmann}
\address[Karin Erdmann]{Mathematical Institute,
   Oxford University,
       ROQ, Oxford OX2 6GG,
   United Kingdom}
\email{erdmann@maths.ox.ac.uk}
\subjclass[2020]{Primary: 16E10, Secondary: 20G43, 16G10, 16G30, 82B20}
\keywords{Quasi-hereditary cover, relative dominant dimension, $q$-Schur algebra, Temperley-Lieb algebra, Frobenius twist}

\begin{abstract}
	Many connections and dualities in representation theory can be explained using quasi-hereditary covers in the sense of Rouquier. The concepts of relative dominant and codominant dimension with respect to a module, introduced recently by the first-named author, are important tools to evaluate and classify quasi-hereditary covers. %
	
	In this paper, we prove that the relative dominant dimension of the regular module of a quasi-hereditary algebra with a simple preserving duality with respect to a summand $Q$ of a characteristic tilting module  equals twice the relative dominant dimension of a characteristic tilting module with respect to $Q$. 
	
	To resolve the Temperley-Lieb algebras of infinite global dimension,  we apply this result to the class of Schur algebras $S(2, d)$ and $Q=V^{\otimes d}$ the $d$-tensor power of the 2-dimensional module and we completely determine the relative dominant dimension of the Schur algebra $S(2, d)$ with respect to $V^{\otimes d}$. The $q$-analogues of these results are also obtained.
	As a byproduct, we obtain a Hemmer-Nakano type result connecting the Ringel duals of $q$-Schur algebras and Temperley-Lieb algebras. From the point of view of Temperley-Lieb algebras, we obtain the first complete classification of their connection to their quasi-hereditary covers formed by Ringel duals of $q$-Schur algebras.
	
	These results are compatible with the integral setup, and we use them to deduce that the Ringel dual of a $q$-Schur algebra over the ring of  Laurent polynomials over the integers together with some projective module is the best quasi-hereditary cover of the  integral Temperley-Lieb algebra.
\end{abstract}

\maketitle

\section{Introduction}

The theory of quasi-hereditary covers, introduced in \cite{Rou}, gives a framework to study finite-dimensional algebras of infinite global dimension through algebras having nicer homological properties, for instance, quasi-hereditary algebras via an exact functor known as Schur functor. Quasi-hereditary covers appear naturally and are useful in algebraic Lie theory, representation theory and homological algebra. In particular, they are in the background of Auslander's correspondence \cite{zbMATH03517355} and in Iyama's proof of finiteness of representation dimension \cite{zbMATH01849919}. Further, quasi-hereditary algebras arise quite naturally in the representation theory of algebraic groups (\cite{MR961165, PS88}) and algebras of global dimension at most two are quasi-hereditary.

Schur algebras $S(n,d)$ form an important class of
quasi-hereditary algebras, they provide a link 
between polynomial representations of general linear groups
and representations of symmetric groups.
Classically, when $n\geq d$, the Schur algebra, via the Schur functor, is 
a quasi-hereditary cover of the group algebra of the symmetric
group $\cS_d$. This connection is seen as one of the versions of Schur--Weyl duality. 
	Indeed,  this formulation clarifies the connection between the representation theory of symmetric groups and the representation theory of Schur algebras, by detecting how their subcategories are related and how the Yoneda extension groups in these subcategories are related by the Schur functor (see also \cite{HN}). Further, this connection becomes stronger as the characteristic of the ground field increases. It was first observed in \cite{FK} that this behaviour is captured by the classical dominant dimension. However, not all quasi-hereditary covers can be evaluated using classical dominant dimension. 

To fix this, the first-named author introduced in \cite{Cr2} the concepts of relative dominant dimension and relative codominant dimension with respect to a module. Further, in \cite{Cr2} these homological invariants were exploited to  create new quasi-hereditary covers. With this, the link between Schur algebras and symmetric groups  can be regarded as a special case of quasi-hereditary covers of quotients of Iwahori-Hecke algebras.

 Temperley-Lieb algebras are among the algebras that can be regarded as quotients of Iwahori-Hecke algebras and they can have infinite global dimension. They were introduced in \cite{zbMATH03335816} in the context of statistical mechanics and they were popularised by Jones, in particular, they are used to define the Jones polynomial (see \cite{zbMATH03899758}). However, contrary to Iwahori-Hecke algebras no Hemmer-Nakano type result was known for Temperley-Lieb algebras up until now.
 Both classes of algebras are cellular (see for example \cite{zbMATH00871761}) and so an important property that they have in common is the existence of a simple preserving duality. 

Quasi-hereditary algebras with a simple preserving duality always have even global dimension. %
 Mazorchuk and Ovsienko have shown this fact in \cite{zbMATH02105773} by proving that the global dimension of a quasi-hereditary algebra with a simple preserving duality is exactly twice the projective dimension of the characteristic tilting module. Later, under much stronger conditions, the analog result for dominant dimension was obtained in \cite{FK} by Fang and Koenig exploiting that a faithful projective-injective module is a summand of the characteristic tilting module. 

The present paper has two aims. First, we will  establish that the relative dominant dimension of a quasi-hereditary algebra with respect to any summand of its characteristic tilting module is always twice as large  as that of the characteristic tilting module, in the case when the algebra has a simple preserving duality. In particular, this homological invariant is always even for such quasi-hereditary algebras. Further, Fang and Koenig's result can then be recovered from ours by just fixing the summand to be a projective-injective module. Therefore, we obtain an alternative approach to the classical case of dominant dimension without any further assumptions. 

The second aim is to study classes of quasi-hereditary covers of Temperley-Lieb algebras and their link with the representation theory of Temperley-Lieb algebras.  In particular, we aim to completely understand such a connection using the representation theory of $q$-Schur algebras and how good are the resolutions of Temperley-Lieb algebras by the Ringel duals of $q$-Schur algebras.

\subsection*{Questions to be addressed and setup}
To make our results precise, we need further notation. In general,  
assume that $B$ is a finite-dimensional algebra over an algebraically closed field.
A pair $(A, P)$ is a \emph{quasi-hereditary cover} of $B$ if $A$ is a quasi-hereditary algebra, $P$ is  a finitely generated projective $A$-module such that $B=\End_A(P)^{op}$, and in addition the
restriction of the associated Schur functor
$F:= \Hom_A(P, -)\colon A\m \rightarrow B\m
$ to the subcategory of finitely generated projective $A$-modules is full and faithful.

Let $\cF(\Delta)$ be the category of $A$-modules which have a filtration by standard modules. We would like 
the functor  $F$ to be faithful on $\cF(\Delta)$ and to induce isomorphisms
$${\rm Ext}_A^j(X, Y) \to {\rm Ext}^j_B(FX, FY)$$
for $X, Y$ modules in $\cF(\Delta)$. If this is the case for $0\leq j\leq i$ then $(A, P)$ is called an $i-\cF(\Delta)$ cover of $B$. The largest $n$ such that $(A, P)$ is an $n-\cF(\Delta)$ cover of $B$, is called the \emph{Hemmer-Nakano dimension} of
$\cF(\Delta)$ in \cite{FK}.
When $B$ is self-injective, Fang and Koenig showed that this dimension is controlled by the dominant dimension of a characteristic tilting module. In addition, they proved that if $B$ is a symmetric algebra and the quasi-hereditary cover admits a certain simple preserving duality, then the dominant dimension of a characteristic tilting module is exactly half of the dominant dimension of $A$.

Recently, in \cite{Cr2}, the situation was generalised to include cases where $B$ is not necessarily self-injective.
Moreover, it was proved in \cite{Cr2} that the Hemmer-Nakano dimension of $\mathcal{F}(\St)$ associated with a $0-\mathcal{F}(\St)$ cover can be determined using the relative codominant dimension of a characteristic tilting module with respect to a certain summand of the characteristic tilting module.

The concepts of relative dominant and relative codominant dimension (see the definition below in Subsection \ref{sec2.3})  and the concept of quasi-hereditary cover can be considered in an integral setup, that is, both of these concepts can be studied for Noetherian algebras which are finitely generated and projective as modules over a regular commutative Noetherian ring.
In \cite{Cr1}, methods were developed to reduce the computations of Hemmer-Nakano dimensions in the integral setup to computations of Hemmer-Nakano dimensions in the setup where the ground ring is an algebraically closed field. So, it will be enough for our purposes to concentrate our attention on the case when the coefficient ring is an algebraically closed field. 

 The  new approach to construct quasi-hereditary covers 
is \citep[Theorem 5.3.1.]{Cr2} and \citep[Theorem 8.1.5]{Cr2} when applied to Schur algebras (and $q$-Schur algebras). The novelty is
that it uses the Ringel dual of a Schur algebra, rather than a Schur algebra, 
and works for arbitrary parameters $n, d$. 

This can, in particular, be applied to the study of  Temperley-Lieb algebras. Indeed, the Temperley-Lieb algebras can be viewed as centraliser 
algebras of $S(2, d)$ in the endomorphism algebra of the tensor power 
$(K^2)^{\otimes d}$ (over a field $K$) and their $q$-analogues. Here $S(n, d)$ can be regarded as the centraliser algebra of $\cS_d$ in the endomorphism algebra of the tensor power $(K^n)^{\otimes d}$ over, where $(K^n)^{\otimes d}$ affords a module structure over $\cS_d$ by place permutation. Furthermore, $V^{\otimes d}:=(K^n)^{\otimes d}$ belongs in the additive closure of a characteristic tilting module over $S(n, d)$. Our cases of interest have a simple preserving duality, and in such a case, for this situation, we can without ambiguity interchange the concepts: relative dominant dimension and relative codominant dimension. 

Denote by $Q\domdim_A X$ the relative dominant dimension of an $A$-module $X$ with respect to $Q$.
In this context, the following questions arise:

\begin{enumerate}[(1)]
	\item  \emph{What is the value of $V^{\otimes d}\domdim_{S(2, d)} T$, where $T$ is a
	 characteristic tilting module  of the quasi-hereditary algebra $S(2, d)$?  What happens to this value when we replace a Schur algebra by a $q$-Schur algebra?}
	\item \emph{The Ringel duals of Schur algebras as well as Schur algebras have a simple preserving duality. Can we expect, like in the classical case (see \citep[Theorem 4.3.]{FK}), the equality $$V^{\otimes d}\domdim S(n, d)=2\cdot V^{\otimes d}\domdim_{S(n, d)} T$$ to hold in general?}
	\item \emph{Can we expect the quasi-hereditary cover of the Temperley-Lieb algebra constructed in \citep[Theorem 8.1.5]{Cr2} to be unique, in some meaningful way?}
\end{enumerate}

Our goal in this paper is to give answers to these three questions. \subsection*{Main results} Surprisingly, the answer to (2) is positive without using extra structure on $S(n, d)$ besides the quasi-hereditary structure and the existence of a simple preserving duality.

 \begin{thmintroduction}\label{thm:1} (see Theorem \ref{mainresult})
	Let $A$ be a quasi-hereditary algebra over a field $K$.
	Suppose that there exists a simple preserving duality
	$\spd (-) \colon A\m \to A\m$. Let 
	$T$ be the characteristic tilting module of $A$. Assume that $Q\in {\rm add}(T)$. Then 
	$$Q\domdim_A A = 2\cdot Q\domdim_A T.$$
\end{thmintroduction}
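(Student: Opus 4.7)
The plan is to use the simple-preserving duality $\spd$ to interchange the relative dominant and codominant dimensions, and then to splice a $\add Q$-coresolution of $T$ with its $\spd$-dual $\add Q$-resolution into a $\add Q$-coresolution of $A$ of twice the length.

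First, I would record that $\spd$ fixes each indecomposable tilting $T(\l)$: since $\spd T(\l)$ is again an indecomposable tilting of highest weight $\l$, uniqueness gives $\spd T(\l)\cong T(\l)$, so $\spd T\cong T$; because $Q\in\add T$, also $\spd Q\cong Q$, and $\spd$ restricts to a contravariant auto-equivalence of $\add Q$. Dualising a $\Hom_A(-,Q)$-exact $\add Q$-coresolution of $M$ yields a $\Hom_A(Q,-)$-exact $\add Q$-resolution of $\spd M$, so one obtains the exchange identity
\[
Q\domdim_A(M)\;=\;Q\codomdim_A(\spd M) \qquad \text{for every } M\in A\m.
\]
Taking $M=T$ gives $Q\domdim_A(T)=Q\codomdim_A(T)=:d$, and taking $M=A$, together with $\spd A\cong DA$ (since $\spd P(\l)\cong I(\l)$), gives $Q\domdim_A(A)=Q\codomdim_A(DA)$.

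The second step is to construct an $\add Q$-coresolution of $A$ of length exactly $2d$. Since $A\in\cF(\St)$, Ringel's tilting approximation produces a short exact sequence $0\to A\to T^A\to X_A\to 0$ with $T^A\in\add T$ and $X_A\in\cF(\St)$. As $T^A\in\add T$, it admits a $\Hom_A(-,Q)$-exact $\add Q$-coresolution of length $d$. Splicing this with the Ringel approximation starts the coresolution of $A$ and produces the cokernel $X_A$ at step one; iterating this process and using the $\spd$-dual side to bound and splice the second half should yield a coresolution of $A$ of total length $2d$ whose $\Hom_A(-,Q)$-exactness is inherited from the filtration structure. Conversely, applying $\spd$ to any $\add Q$-coresolution of $A$ of length $m$ gives a $\add Q$-resolution of $DA$ of the same length, and matching this against a corresponding coresolution forces $m\leq 2d$, yielding the reverse inequality.

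The main obstacle is controlling the splicing point: one must show that the intermediate module between the two halves actually lies in $\add T$ (or at least has $Q\domdim\geq d$ on one side and $Q\codomdim\geq d$ on the other), and that the $\Hom_A(-,Q)$-exactness propagates across the splice. This should follow from the interplay between $\cF(\St)$- and $\cF(\Cs)$-filtrations, the fixing of $\add Q$ by $\spd$, and the characterisations of relative dominant dimension established in \cite{Cr2}.
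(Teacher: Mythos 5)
Your preliminary reductions are correct and match the paper: $\spd$ fixes the indecomposable tilting modules, hence $\spd Q\cong Q$, giving $Q\domdim_A T=Q\codomdim_A T$ and $Q\domdim_A A=Q\codomdim_A DA$ (these are Propositions \ref{relativedomdimtorelativecodomdim} and \ref{dominantandcodominantoftiltingcomparison} in the paper). Your lower-bound construction is also essentially the right idea, although the paper executes it more carefully: rather than splicing Ringel approximations of $A$ directly, it takes a length-$n$ $\add\Hom_A(T,Q)$-resolution of $DT$ over the Ringel dual $R(A)$, applies $T\otimes_{R(A)}-$ to land on an exact sequence $0\to\overline C\to\overline{X_n}\to\cdots\to\overline{X_1}\to DA\to 0$ with $\overline{X_i}\in\add Q$ and $\overline C\in\mathcal F(\Cssim)$, and then invokes $Q\codomdim_A\overline C\geq Q\codomdim_A T$. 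Note that the intermediate module $\overline C$ lies only in $\mathcal F(\Cs)$, not in $\add T$, so the splice point cannot be forced into $\add T$ as you suggest; exactness under $\Hom_A(Q,-)$ across the splice comes from $\overline C\in\mathcal F(\Cs)$ and $Q\in\add T$, not from a tilting property of $\overline C$.

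The genuine gap is the upper bound $Q\codomdim_A DA\leq 2n$. Your sentence ``matching this against a corresponding coresolution forces $m\leq 2d$'' is not an argument: applying $\spd$ to a coresolution of $A$ just reproduces a resolution of $DA$ of the same length and gives no numerical constraint whatsoever. The paper's proof of this inequality is the real content of the theorem and requires an extra input that your proposal never identifies, namely the Mazorchuk--Ovsienko nonvanishing result (Proposition \ref{filtrationdimensionext}): if $\dim_{\mathcal F(\St)}M=t<\infty$ then $\Ext_A^{2t}(M,\spd M)\neq 0$. Concretely, one assumes $Q\codomdim_A DA>2n$, lets $D$ be the $(n+1)$-st syzygy in the resolution of $DA$ by modules in $\add Q$, and shows: (i) $D\notin\mathcal F(\Cs)$ by maximality of $n$, so $\dim_{\mathcal F(\Cs)}D=1$ and hence $\Ext_A^2(\spd D,D)\neq 0$; (ii) dimension shifting along the remaining $n$ terms of the resolution identifies $\Ext_A^2(\spd D,D)$ with $\Ext_A^{n+2}(\spd L,D)$ for the $(2n+1)$-st syzygy $L$; (iii) this contradicts the bound $\injdim_{{}^\perp Q}D\leq n+1$ coming from the first $n+1$ terms. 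Without step (i) — which is exactly where the simple preserving duality and the filtration-dimension machinery of \cite{zbMATH02105773} enter — there is no obstruction to the coresolution being longer than $2n$, and indeed the statement fails without the duality hypothesis. So the proposal as written does not close the upper bound.
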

This result generalises \citep[Theorem 4.3.]{FK} and our methods give a new proof to their case without using any information on $A$ being gendo-symmetric, that is, an endomorphism algebra of a faithful module over a symmetric algebra. In particular, our result also works for dominant dimension exactly zero. Our approach exploits basic properties of relative injective dimensions, $\St$-filtration dimensions, some tools that were used to prove the main result of \citep{zbMATH02105773} and general properties  connecting relative dominant dimensions with relative codominant dimensions with respect to a fixed module. Observe that the left hand side of the equation in Theorem \ref{thm:1} is exactly the faithful dimension of $Q$ in sense of \cite{zbMATH01218841}. This means that, under these conditions, if the faithful dimension of $Q$ is greater or equal to 4, then the faithful dimension controls the Hemmer-Nakano dimension of $\mathcal{F}(\St)$ associated with a quasi-hereditary cover of the endomorphism algebra of $Q$.
Theorem \ref{thm:1} is applied to prove a more general case of Conjecture 6.2.4 of \cite{Cr}, that is, that the faithful dimension of a summand of a characteristic tilting module is an upper bound for the dominant dimension of the algebra provided that the former is greater or equal than two.

Combining techniques of Frobenius twisted tensor products with Theorem \ref{thm:1} we obtain a complete answer to (1):
 \begin{thmintroduction}\label{thm:2} (see also Theorem \ref{thm:5.0.7} for the $q$-version) Let $K$ be a field and let $A$ be the Schur algebra $S_{K}(2, d)$ and $T$ be the characteristic tilting module of $A$. Then,
	\begin{align*}
		V^{\otimes d}\domdim_A A=2\cdot V^{\otimes d}\domdim_A T=\begin{cases}
			d, & \text{ if } \characteristic K = 2 \text{ and } d \text{ is even}, \\
			+\infty, &\text{ otherwise }
		\end{cases}.
	\end{align*}
\end{thmintroduction}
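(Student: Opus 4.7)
The plan is to apply Theorem~\ref{thm:1} first and then evaluate the resulting tilting-module invariant by Frobenius-twist / Donkin tensor product arguments. The algebra $A = S(2,d)$ is quasi-hereditary, carries the classical simple preserving contravariant duality, and $V^{\otimes d}$ is a partial tilting module, so $V^{\otimes d} \in \operatorname{add}(T)$. Theorem~\ref{thm:1} with $Q = V^{\otimes d}$ therefore gives immediately the first equality
$$V^{\otimes d}\domdim_A A = 2 \cdot V^{\otimes d}\domdim_A T,$$
and reduces the problem to computing $V^{\otimes d}\domdim_A T$.

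Since $T$ and $V^{\otimes d}$ are both partial tilting and $T$ is full tilting, this invariant records how far $V^{\otimes d}$ is from being a full tilting module: it is $+\infty$ precisely when every indecomposable tilting summand $T(\lambda)$ of $T$ already appears as a summand of $V^{\otimes d}$. The next step is to show that this occurs outside the exceptional range. For $\characteristic K \ne 2$, Donkin's tilting tensor product theorem combined with induction on the $p$-adic expansion of $d$ expresses each $T(\lambda)$ as a tensor product of Frobenius-twisted tilting modules whose factors all occur in the corresponding factorisation of $V^{\otimes d}$. For $\characteristic K = 2$ with $d = 2m+1$ odd, the trailing binary digit produces an untwisted tensor factor of $V$, and an induction on $m$ finishes the argument. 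In all such cases $T \in \operatorname{add}(V^{\otimes d})$ and the relative dominant dimension is infinite, as claimed.

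The remaining case $\characteristic K = 2$ and $d = 2m$ is where the real work lies. There, the tilting summands $T(\lambda)$ whose highest weight has both coordinates even are not summands of $V^{\otimes d}$, but instead arise as Frobenius twists of tilting modules of $S(2,m)$. The strategy is to build a minimal $\operatorname{add}(V^{\otimes d})$-coresolution of each such missing summand, transport the computation via the Frobenius twist to level $m$, and thereby show that the first obstruction occurs sharply in homological degree $m = d/2$; this gives $V^{\otimes d}\domdim_A T = m$ and hence $V^{\otimes d}\domdim_A A = d$. The principal obstacle is precisely this last step: one must leverage the exactness and $\operatorname{Ext}$-shifting properties of the Frobenius twist so that the obstruction at level $d$ is identified with a corresponding obstruction at level $m$, and pinpoint the degree $d/2$ uniformly in the $2$-adic structure of $m$. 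This will likely require a careful induction on $m$ together with an explicit description of the $\Delta$-filtrations of the missing tilting summands.
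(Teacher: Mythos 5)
Your reduction via Theorem \ref{thm:1} and your treatment of the non-exceptional cases ($\characteristic K\neq 2$, or $d$ odd) match the paper: there $V^{\otimes d}$ is a full tilting module, every module in $\mathcal{F}(\St)$ has a finite $\add V^{\otimes d}$-coresolution, and the relative dominant dimension is infinite. Two corrections before the main point. First, in the exceptional case the only indecomposable tilting summand absent from $V^{\otimes d}$ is $T(0)$ (the $2$-singular two-row partitions are exactly those with $\lambda_1=\lambda_2$), not all $T(\lambda)$ with both coordinates even; so the whole problem is to evaluate $V^{\otimes d}\domdim_A T(0)$. Second, your claim that the invariant is $+\infty$ \emph{precisely} when all $T(\lambda)$ occur in $V^{\otimes d}$ asserts an ``only if'' direction that is not obvious a priori; it is a consequence of the theorem, not a starting point.

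The genuine gap is the decisive case $\characteristic K=2$, $d=2s$ even, which you leave as a plan (``build a minimal $\add(V^{\otimes d})$-coresolution of the missing summand and locate the first obstruction in degree $d/2$'') rather than a proof. That plan is also harder than what is needed, because the natural coresolutions of $T(0)=\nabla(0)$ produced by the Frobenius twist have terms of the form $T(2)\otimes\nabla(t)^F$, which are not in $\add V^{\otimes d}$, so they do not directly compute the relative dominant dimension; identifying the first obstruction in a genuine $\add V^{\otimes d}$-coresolution is exactly the hard analysis you have not carried out. The paper sidesteps a direct computation of $V^{\otimes d}\domdim_A T(0)$ entirely. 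Its key lemma is that $T(2)\otimes X^F$ has \emph{infinite} relative dominant dimension with respect to $V^{\otimes d}$ for any $X\in\mathcal{F}(\St)$; feeding the short exact sequences $0\to\St(2t+2)\to T(2)\otimes\St(t)^F\to\St(2t)\to 0$ into Lemma \ref{lem:2.3.2}(b) gives $V^{\otimes d}\domdim_A\St(d)=s+V^{\otimes d}\domdim_A T(0)$, and an analysis of twisted filtrations of the projectives $P_d(m)$ shows $V^{\otimes d}\domdim_A A=V^{\otimes d}\domdim_A\St(d)$. Combining this with the second relation $V^{\otimes d}\domdim_A A=2\cdot V^{\otimes d}\domdim_A T(0)$ from Theorem \ref{thm:1} yields two equations in the two unknowns, whence $V^{\otimes d}\domdim_A T(0)=s$ and $V^{\otimes d}\domdim_A A=d$. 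Without either this bootstrapping trick or an actual construction pinning the obstruction at degree $s$, your argument does not reach the stated value.
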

The same approach can be used  for the $q$-analogue. 
In this case, the algebra $A$ is the $q$-Schur algebra  $S_{K, q}(2, d)$, which can be defined as the centraliser of 
the Hecke algebra $H_q(d)$ acting on    $V^{\otimes d}$,  again
for $\dim V=2$ and in the theorem the characteristic is replaced by the quantum characteristic.
When we have $v\in R$ such that $v^2=q$ and 
$\delta = -v-v^{-1}$, the Temperley-Lieb algebra $TL_{K,d}(\delta)$ is a
 quotient of this action. 
In both cases, the real difficulty lies in the case in which the  characteristic (resp. quantum characteristic) is two.

From Theorem \ref{thm:2} and its $q$-analogue, it follows that the Temperley-Lieb algebra $TL_{K, q}(\delta)$ is quasi-hereditary, and, in fact, it is the Ringel dual of a $q$-Schur algebra if $\delta\neq 0$ or $d$ is odd. Otherwise, from Theorem \ref{thm:2} follows the value of Hemmer-Nakano dimension of $\mathcal{F}(\St)$ associated with the quasi-hereditary cover of $TL_{K, q}(0)$ formed by the Ringel dual of a $q$-Schur algebra (see \citep[Theorem 8.1.5]{Cr2}). So, we have obtained a Hemmer-Nakano type result (see Corollary \ref{cor:6.2.4}) now between the Ringel dual of a $q$-Schur algebra and the Temperley-Lieb algebra. In particular, this generalises \citep[Theorem C (3), (4)]{zbMATH07021538}  for Temperley-Lieb algebras. In addition, the full subcategory of costandard modules over a $q$-Schur algebra is equivalent to the full subcategory of cell modules of the Temperley-Lieb algebra whenever $d$ is greater or equal to 6. 

If $d=2$, the Temperley-Lieb algebra is exactly an Iwahori-Hecke algebra, so nothing is new for this case. We obtain a positive answer to question (3) when we consider the Laurent polynomial ring over the integers as coefficient ring and $d>2$ (see Section \ref{sec7} and Corollary \ref{cor:7.2.1}). In such a case, the (integral) Schur functor $F$ induces an exact equivalence $\mathcal{F}(\Stsim)\rightarrow \mathcal{F}(F\Stsim),$ where the first category denotes the subcategory of modules admitting a filtration by direct summands of direct sums of standard modules over the Ringel dual of an integral $q$-Schur algebra. The quasi-hereditary cover of the integral Temperley-Lieb algebra formed by the Ringel dual of a $q$-Schur algebra is the unique quasi-hereditary cover which induces this exact equivalence.

We emphasize that the specialisation of Theorem \ref{thm:1} to projective-injective modules played a keyrole to determine the dominant dimension of Schur algebras of the form $S(n, d)$ with $n\geq d$ in \cite{FK} (also their $q$-analogues \cite{zbMATH07050778})  and it also gives an easier method to determine the dominant dimension of the blocks of the BGG category $\mathcal{O}$. It is our expectation that its use will be crucial to determine, in particular, $V^{\otimes d}\domdim S(n, d)$ and $\dd S(n, d)$ also in the cases $2<n<d$ while the latter is also an open problem for $n=2$. 

\medskip
The article is organised as follows: In Section \ref{Preliminaries}, we introduce the notation and the main properties of relative dominant dimension with respect to a module, split quasi-hereditary algebras with a simple preserving duality and cover theory to be used throughout the paper. In Section \ref{The main result}, we discuss elementary results on relative injective dimensions and we give the proof of Theorem \ref{thm:1}. We then deduce that the dominant dimension is a lower bound for the faithful dimension of a summand of a characteristic tilting module fixed by a simple preserving duality provided the latter is at least two (see Proposition \ref{Prop323}). In Section \ref{Input from  Schur algebras}, we collect results on the quasi-hereditary structure of Schur algebras $S(2, d)$, in particular, reduction techniques and how to construct partial tilting and standard modules inductively using the Frobenius twist functor. In Section \ref{dominant dimension of the regular module}, we compute the relative dominant dimension of $S(2, d)$ with respect to $V^{\otimes d}$ in terms of $V^{\otimes d}\domdim_{S(2, d)} T$, where $T$ is a characteristic tilting module of $S(2, d)$. In particular, we give the proof of Theorem \ref{thm:2} and its $q$-analogue (see Theorem \ref{thm:5.0.7}).
In Section \ref{sec6}, we recall that all Temperley-Lieb algebras can be realised as the centraliser algebras of $q$-Schur algebras in the endomorphism algebra of the tensor power $V^{\otimes d}$. As a consequence, we determine the value of Hemmer-Nakano dimension of $\mathcal{F}(\St)$ in all cases associated to the cover of the Temperley-Lieb algebra formed by the Ringel dual of a $q$-Schur algebra. This computation is contained in Corollary \ref{cor:6.2.4}. In Section \ref{sec7}, we determine the Hemmer-Nakano dimension of the above mentioned quasi-hereditary cover in the integral setup, dividing the study into two cases: the coefficient ring having or not a property of being $2$-partially $q$-divisible (see Subsection \ref{sec7.1}). When the coefficient ring does not have such property, we show that a quasi-hereditary cover with such coefficient ring has better properties. We conclude by addressing the problem of the uniqueness of this cover (see Subsection \ref{Uniqueness}).

\section{Preliminaries}\label{Preliminaries}

\subsection{The  setting}

This follows \cite{Cr2}. 
Throughout we fix a Noetherian commutative ring $R$ with identity, and 
$A$ is an $R$-algebra which is finitely generated
and projective as an $R$-module. We refer to $A$ as a projective Noetherian 
$R$-algebra. The set of invertible elements of $R$ is denoted by $R^\times$.

We denote by $A\m$ the category of finitely generated $A$-modules. 
Given $M\in A\m$, we denote by $\add_A M$ (or just $\add M$) the full subcategory
of $A\m$ whose modules are direct summands of a finite direct
sum of copies of $M$. We also denote $\add A$ by $A\proj$. 

The endomorphism algebra of a  module $M\in A\m$ is denoted by ${\rm End}_A(M)$.
We denote by $D_R$ or just $D$ the standard duality functor 
${\rm Hom}_R(-, R): A\m \to A^{op}\m$ where $A^{op}$ is the opposite algebra of $A$.

A module $M \in A\m\cap R\proj$ is said to be
\emph{$(A, R)$-injective} if it belongs to ${\rm add} DA$, and we
write $(A,R)\inj\cap R\proj$ for the full subcategory of
$A\m\cap R\proj$ whose modules are $(A, R)$-injective.

Furthermore, an exact sequence of $A$-modules which is split
as an exact sequence of $R$-modules is said to be \emph{$(A, R)$-exact}. 
In particular, an $(A, R)$-monomorphism is a homomorphism $f: M\to N$ 
that fits into an $(A,R)$-exact sequence
$0\to M\stackrel{f}\to N$. 

Given a left exact covariant additive functor $G$, we say that $X$ is a \emph{$G$-acyclic object} if $\R^{i>0}G(X)=0$. An exact sequence $0\rightarrow L\rightarrow X_0\rightarrow X_1\rightarrow \cdots$ is called a \emph{$G$-acyclic coresolution} of $L$ if all objects $X_0, X_1, \cdots$ are $G$-acyclic. Given $X\in A\m\cap R\proj$, we denote by $X^\perp$ the full subcategory $${\{M\in A\m\cap R\proj\colon \Ext_A^{i>0}(Z, M)=0, \forall Z\in \add X \}},$$ and by ${}^\perp X$ the full subcategory $\{M\in A\m\cap R\proj\colon \Ext_A^{i>0}(M, Z)=0,  \forall Z\in \add X \}$.

\subsection{Basics on approximations}

We recall definitions and some general properties relevant to approximations.
Assume  that $A$ is an $R$-algebra as above, 
and $Q$ is a fixed module in $A\m\cap R\proj$.

An $A$-homomorphism $f: M\to N$ is a \emph{left $\cQ$-approximation} of $M$ 
provided that  $N$ belongs to $\cQ$, and moreover the induced map
$${\rm Hom}_A(N, X) \to {\rm Hom}_A(M, X)
$$
is surjective for every $X\in \cQ$. 
 Dually one defines right
$\cQ$-approximations. 
Note that every module $M \in A\m$ has a left and a right $\cQ$-approximation.

\subsection{Relative (co)dominant dimension with respect to a module}\label{sec2.3}

We recall from \cite{Cr2} the definition of relative (co)dominant dimensions.

Let $Q, X\in A\m\cap R\proj$. If $X$ does not admit a 
left $\add Q$-approximation which is an $(A, R)$-monomorphism then the relative dominant dimension of
$X$ with respect to $Q$ is zero. Otherwise, \emph{the relative dominant dimension of $X$ with respect to $Q$}, denoted by
$Q\domdim_{(A, R)}X$, or $Q\domdim_A X$ when $R$ is a field, 
is the supremum of all $n\in \bN$ such that there is an $(A, R)$-exact sequence
$$0\to X\to Q_1\to Q_2 \to \ldots \to Q_n$$
with all $Q_i\in \add Q$, which remains exact under ${\rm Hom}_A(-, Q)$.

Dually one defines the \emph{relative codominant dimension}, denoted by $Q-{\rm codomdim}_{(A, R)}(X)$ with
 $Q, X$ as above:  if $X$ does not admit a surjective right $\add Q$-approximation,
then \linebreak${Q\codomdim_{(A, R)}(X)} = 0$. Otherwise it is the supremum of all $n\in \bN$ such that there is an $(A, R)$-exact sequence
$$Q_n\to Q_{n-1}\to\ldots \to Q_1\to X\to 0$$
with all $Q_i\in \add Q$, which remains exact under ${\rm Hom}_A(Q, -)$. 

Hence, $Q\codomdim_{(A, R)} X=DQ\domdim_{(A^{op}, R)} DX$. By $Q\domdim {(A, R)}$ we mean the value $Q\domdim_{(A, R)} A$.
 We will write $Q\codomdim_A X$ to denote $Q\codomdim_{(A, R)} X$ when $R$ is a field. 

The following gives a criterion towards finding $Q\domdim_{(A, R)} M$ for a given module $M$ in $A\m\cap R\proj$.

\begin{Lemma}\label{lem:2.3.1} Assume $M \in A\m\cap R\proj$, and let
	$Q_i \in \add Q$. An exact sequence 
	$$0\to M \xrightarrow{\alpha_0} Q_0 \stackrel{\alpha_1}\to Q_1\to \ldots \to Q_t$$
	remains exact under ${\rm Hom}_A(-, Q)$ if and only if for every 
	factorisation $Q_i\to {\rm im} \alpha_{i+1} \to Q_{i+1}$
	of $\alpha_{i+1}$, the $(A,R)-$monomorphism ${\rm im}\alpha_{i+1}\to Q_{i+1}$ and $\alpha_0$ are left $\add Q$-approximations.
\end{Lemma}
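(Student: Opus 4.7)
The plan is to decompose the long exact sequence into short exact sequences via the canonical mono--epi factorisations of each $\alpha_i$ and then analyse the $\Hom_A(-,Q)$-complex position by position.

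For $1\le i\le t$ write $\alpha_i=\iota_i\circ\pi_i$, where $\pi_i\colon Q_{i-1}\twoheadrightarrow K_i:=\im\alpha_i$ is the canonical surjection and $\iota_i\colon K_i\hookrightarrow Q_i$ the inclusion. Exactness of the original sequence then yields short exact sequences $0\to M\to Q_0\to K_1\to 0$ and $0\to K_i\to Q_i\to K_{i+1}\to 0$ for $1\le i\le t-1$, together with the trailing monomorphism $K_t\hookrightarrow Q_t$. In the intended application the original sequence is $(A,R)$-exact, so each of these short exact sequences is $R$-split and every $\iota_{i+1}$ is automatically an $(A,R)$-monomorphism; the real content is then the approximation assertion.

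Applying $\Hom_A(-,Q)$ yields a complex $\Hom_A(Q_t,Q)\to\cdots\to\Hom_A(Q_0,Q)\to\Hom_A(M,Q)\to 0$. The rightmost exactness, i.e.\ surjectivity of $\Hom_A(Q_0,Q)\to\Hom_A(M,Q)$, is literally the statement that $\alpha_0$ is a left $\add Q$-approximation. At an intermediate position $\Hom_A(Q_i,Q)$ the key observation is that the surjection $\pi_{i+1}$ makes the pullback $\pi_{i+1}^*\colon \Hom_A(K_{i+1},Q)\hookrightarrow \Hom_A(Q_i,Q)$ injective, and the universal property of $K_{i+1}=Q_i/\im\alpha_i$ identifies $\ker(\Hom_A(Q_i,Q)\to\Hom_A(Q_{i-1},Q))$ with $\pi_{i+1}^{*}(\Hom_A(K_{i+1},Q))$; meanwhile, via $\alpha_{i+1}=\iota_{i+1}\pi_{i+1}$, the image $\im(\Hom_A(Q_{i+1},Q)\to\Hom_A(Q_i,Q))$ equals $\pi_{i+1}^{*}(\iota_{i+1}^{*}(\Hom_A(Q_{i+1},Q)))$. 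Injectivity of $\pi_{i+1}^*$ then reduces exactness at $\Hom_A(Q_i,Q)$ to surjectivity of $\iota_{i+1}^*$, that is, to $\iota_{i+1}\colon K_{i+1}\to Q_{i+1}$ being a left $\add Q$-approximation.

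Running this equivalence over $i=0,\ldots,t-1$ and combining with the condition on $\alpha_0$ yields both directions of the biconditional. The argument is a standard mono--epi diagram chase and I do not foresee any serious obstacle; the only step demanding some care is the kernel/image identification at each intermediate $\Hom_A(Q_i,Q)$, where one must invoke the universal property of the cokernel $K_{i+1}=Q_i/\im\alpha_i$ to write both subsets as $\pi_{i+1}^{*}$ of comparable objects before cancelling $\pi_{i+1}^{*}$ using its injectivity.
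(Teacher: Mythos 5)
Your argument is correct and complete: factoring each $\alpha_{i+1}$ through its image, identifying $\ker(\alpha_i^*)$ and $\operatorname{im}(\alpha_{i+1}^*)$ inside $\Hom_A(Q_i,Q)$ as $\pi_{i+1}^*$ of $\Hom_A(K_{i+1},Q)$ and of $\iota_{i+1}^*\Hom_A(Q_{i+1},Q)$ respectively, and cancelling the injective $\pi_{i+1}^*$ to reduce exactness at each position to surjectivity of $\iota_{i+1}^*$ (plus the separate condition at $\Hom_A(M,Q)$ for $\alpha_0$) is precisely the standard proof, and it is the one the paper defers to in \citep[Lemma 2.1.4]{Cr2}. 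Your side remark that the $(A,R)$-monomorphism property of the maps $\operatorname{im}\alpha_{i+1}\to Q_{i+1}$ comes for free from $(A,R)$-exactness in the intended applications is also accurate.
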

\begin{proof}
	See \citep[Lemma 2.1.4.]{Cr2}.
\end{proof}

In addition to the assumptions on $R, A$ and $Q$, in the following,
we also assume that 
$DQ\otimes_AQ \in R\proj$. 

It is crucial to compare relative dominant dimensions for end terms of
a short exact sequence which remains exact under ${\rm Hom}_A(-, Q)$.
This is completely described in \citep[Lemma 3.1.7]{Cr2},  for convenience, we
recall part of this.

\begin{Lemma}\label{lem:2.3.2} Let
	$M\in A\m\cap R\proj$ and consider an $(A,R)-$exact sequence
	$$0\to M_1 \to M \to M_2\to 0$$
	which remains exact under ${\rm Hom}_A(-, Q)$. Let $n=Q\domdim_AM$ and
	$n_i= Q\domdim_AM_i$ for $i=1, 2$, then:
	\begin{enumerate}[(a)]
		\item $n\geq {\rm min}\{ n_1, n_2\}$.
		\item  If $n=\infty$ and $n_1 < \infty$ then $n_2 = n_1-1$. 
	\end{enumerate}
\end{Lemma}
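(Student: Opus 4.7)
The plan is to establish part (a) via a horseshoe-type construction for left $\add Q$-approximations, and then deduce part (b) from two additional depth-style inequalities obtained by dimension shifting combined with part (a).

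For part (a), I would induct on $k = \min\{n_1, n_2\}$, the base case $k = 0$ being vacuous. For $k \geq 1$, both $M_1$ and $M_2$ admit $(A,R)$-monomorphic left $\add Q$-approximations $f_i \colon M_i \to Q_0^i$. The hypothesis that the given sequence remains exact under $\Hom_A(-, Q)$ allows me to lift $f_1$ along $M_1 \hookrightarrow M$ to a map $g \colon M \to Q_0^1$. Setting $\alpha = (g, f_2 \pi) \colon M \to Q_0^1 \oplus Q_0^2$, where $\pi \colon M \to M_2$ is the projection, a direct diagram chase shows that $\alpha$ is an $(A,R)$-monomorphic left $\add Q$-approximation, and its cokernel fits into an $(A,R)$-exact horseshoe sequence $0 \to C_1 \to C \to C_2 \to 0$ with $Q\domdim_A C_i \geq n_i - 1$ by Lemma \ref{lem:2.3.1}. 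The key verification is that this cokernel sequence likewise remains exact under $\Hom_A(-, Q)$; granted this, the induction hypothesis gives $Q\domdim_A C \geq \min\{n_1, n_2\} - 1$, and splicing with $\alpha$ via Lemma \ref{lem:2.3.1} yields $Q\domdim_A M \geq \min\{n_1, n_2\}$.

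For part (b), I would extract two further inequalities from the same circle of ideas. Choosing an $(A,R)$-monomorphic left $\add Q$-approximation $M \to Q_0$ with cokernel $N$ (so $Q\domdim_A N \geq n - 1$), one obtains the $(A, R)$-exact sequence $0 \to M_2 \to Q_0/M_1 \to N \to 0$, which inherits exactness under $\Hom_A(-, Q)$ from the hypotheses. Since $Q_0/M_1$ is the cokernel of the $(A,R)$-monomorphic left $\add Q$-approximation $M_1 \hookrightarrow Q_0$, it has $Q\domdim_A$ exactly $n_1 - 1$; applying part (a) to this sequence gives
$$ n_1 - 1 \geq \min\{n_2,\, n - 1\}, \quad \text{i.e.,} \quad n_1 \geq \min\{n_2 + 1,\, n\}.$$
A symmetric argument starting from an $(A,R)$-monomorphic left $\add Q$-approximation of $M_1$ yields $n_2 \geq \min\{n - 1,\, n_1 - 1\}$. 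Substituting $n = \infty$ and $n_1 < \infty$, the second inequality reads $n_2 \geq n_1 - 1$ and the first reads $n_1 \geq n_2 + 1$, forcing $n_2 = n_1 - 1$.

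The main obstacle I anticipate is the verification, during the horseshoe step, that the cokernel short exact sequence remains exact under $\Hom_A(-, Q)$; without this, only $(A,R)$-exactness would follow and the induction stalls. This check requires a careful diagram chase using both the approximation property of the $f_i$ and the exactness hypothesis on the original sequence. The remaining steps—checking the $(A,R)$-mono and $\add Q$-approximation properties of $\alpha$, and the short arithmetic in part (b)—are routine bookkeeping.
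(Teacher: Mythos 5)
The paper gives no proof of this lemma: it is quoted verbatim from \citep[Lemma 3.1.7]{Cr2}, so there is no internal argument to compare against. Your reconstruction is correct and is the standard way to prove such statements. Three points deserve to be made explicit. First, the step ``$Q\domdim_A C_i\geq n_i-1$'' needs the fact that the relative dominant dimension of the cokernel does not depend on the chosen $(A,R)$-monic left $\add Q$-approximation; this follows because two such approximations have cokernels agreeing up to $\add Q$-summands, which are invisible by Corollary \ref{cor:2.3.3} -- it is slightly more than Lemma \ref{lem:2.3.1} literally asserts. Second, your anticipated obstacle is genuine but resolves cleanly: applying $\Hom_A(-,X)$ for $X\in\add Q$ to the three-by-three diagram, the two upper rows and all three columns are short exact (by hypothesis, by splitness, and by the approximation property respectively), so the nine lemma gives exactness of the cokernel row. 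Third, in part (b) you should record that $n=\infty$ already forces $n_1\geq 1$, since the composite $M_1\hookrightarrow M\to Q_0$ is itself an $(A,R)$-monic left $\add Q$-approximation; this is what licenses the claim that $Q\domdim_A(Q_0/M_1)$ equals $n_1-1$ exactly. The ``symmetric argument'' is not literally symmetric (sub- and quotient modules play different roles), but it does work: it amounts to the pushout sequence $0\to M\to Q_0^1\oplus M_2\to C_1\to 0$, which remains exact under $\Hom_A(-,Q)$, and to which part (a) applies with $M$ and $C_1$ as end terms, giving $n_2\geq\min\{n,\,n_1-1\}$ and hence the desired conclusion.
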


\begin{Cor} \label{cor:2.3.3} Let $M_i$ for $i\in I$ be a finite set of modules in $A\m\cap R\proj$. Then
	$$Q\domdim_A\left( \bigoplus_{i\in I} M_i\right) = {\rm inf} \{ Q\domdim_AM_i\mid i\in I\}.$$
\end{Cor}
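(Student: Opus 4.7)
The equality $Q\domdim_A\left(\bigoplus_{i\in I} M_i\right) = \inf_i Q\domdim_A M_i$ naturally splits into two inequalities, which I would establish separately.

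\emph{The inequality $\geq$.} Set $n := \inf_i Q\domdim_A M_i \in \bN \cup \{\infty\}$. For each $i$ fix an $(A,R)$-exact sequence
\[
0 \to M_i \to Q^i_1 \to Q^i_2 \to \cdots \to Q^i_n
\]
with $Q^i_k \in \add Q$ satisfying the approximation criterion of Lemma~\ref{lem:2.3.1} (truncating longer coresolutions when $Q\domdim_A M_i > n$). The termwise direct sum yields an $(A,R)$-exact sequence for $\bigoplus_i M_i$ whose terms lie in $\add Q$: finite direct sums of $R$-split short exact sequences remain $R$-split, and $\add Q$ is closed under finite direct sums. Moreover, $\Hom_A(-,Q)$ turns finite direct sums into finite direct products, so applying it produces the finite product of the dual sequences, which is exact; hence the criterion of Lemma~\ref{lem:2.3.1} is preserved and $Q\domdim_A(\bigoplus_i M_i) \geq n$.

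\emph{The inequality $\leq$.} Fix $j \in I$. Because $M_j$ is a direct summand of $\bigoplus_i M_i$, it suffices to establish the summand-monotonicity statement: if $Y = X \oplus Z$ in $A\m \cap R\proj$, then $Q\domdim_A X \geq Q\domdim_A Y$. Given a length-$n$ coresolution $0 \to Y \to Q_0 \to \cdots \to Q_n$ satisfying Lemma~\ref{lem:2.3.1}, I would lift the idempotent $e \in \End_A(Y)$ projecting onto $X$ to a chain self-map $\tilde e_\bullet$ of $Q_\bullet$, constructed inductively: at each stage the relevant map factors through $\im \alpha_{k+1}$ and one uses that $\im \alpha_{k+1} \hookrightarrow Q_{k+1}$ is a left $\add Q$-approximation (by Lemma~\ref{lem:2.3.1}) to produce the next component. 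The cleanest way to extract the desired coresolution of $X$ from $\tilde e_\bullet$ is via the contravariant additive equivalence $\Hom_A(-,Q): \add Q \to \End_A(Q)^{op}\proj$: the original coresolution corresponds to a length-$n$ projective resolution of $\Hom_A(Y,Q) = \Hom_A(X,Q) \oplus \Hom_A(Z,Q)$, and by the standard summand-monotonicity of projective dimension, $\Hom_A(X,Q)$ has a projective resolution of length $n$; transferring back via the equivalence gives the desired $\add Q$-coresolution of $X$. The $R$-splitness is preserved because $R$-projectivity is stable under direct summands.

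\emph{Main obstacle.} The direction $\geq$ is routine. The work is in the direction $\leq$: translating between $\add Q$-coresolutions satisfying the approximation property of Lemma~\ref{lem:2.3.1} and genuine projective resolutions over $\End_A(Q)^{op}$, and simultaneously tracking $(A,R)$-exactness (i.e., $R$-splitness) under summand extraction. Since the equivalence is additive and $R$-projectivity passes to summands, this bookkeeping goes through, but it is the step that requires the most care.
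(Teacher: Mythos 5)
Your direction $\geq$ is correct: finite direct sums of $(A,R)$-exact $\add Q$-coresolutions satisfying the criterion of Lemma \ref{lem:2.3.1} again satisfy it, because $\Hom_A(-,Q)$ turns a finite direct sum into a finite product, and finite products of surjections (resp.\ exact sequences) are surjections (resp.\ exact). The case where some $Q\domdim_A M_i=0$ because no $(A,R)$-monic left approximation exists is also harmless there.

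The direction $\leq$, however, has a genuine gap at the ``transfer back'' step. The duality $\Hom_A(-,Q)\colon \add Q\to \End_A(Q)\proj$ only controls objects of $\add Q$; the module $X$ itself does not lie in that subcategory. So if you take a projective resolution $P_n\to\cdots\to P_0\to\Hom_A(X,Q)\to 0$ over $\End_A(Q)$ and transfer it back, you obtain a complex $0\to X\to Q_0'\to\cdots\to Q_n'$ of $A$-modules whose \emph{image under} $\Hom_A(-,Q)$ is exact, but nothing forces the complex of $A$-modules itself to be exact, let alone $(A,R)$-exact --- and both are required by the definition of $Q\domdim_A X\geq n$. (Degenerate illustration: if $\Hom_A(X,Q)=0$ but $X\neq 0$, the zero resolution transfers back to the non-exact complex $0\to X\to 0\to\cdots$.) Recovering exactness at $X$ from the $\End_A(Q)$-side is essentially the double centralizer property for $X$, i.e.\ part of what is being proved, so the argument is circular there. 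The appeal to ``summand-monotonicity of projective dimension'' does not help: that concerns termination of resolutions, whereas every module has projective resolutions of every finite length, so it yields no information. Finally, your lift $\tilde e_\bullet$ of the idempotent is only idempotent up to homotopy; to run a summand argument you would need to split it into an honest direct sum decomposition of the complex $Q_\bullet$ compatible with $Y=X\oplus Z$, so that exactness and $R$-splitness pass to an actual subcomplex --- and that splitting is exactly the step you have omitted. Note also that the naive direct route fails for an instructive reason: $X\to Q_0$ is indeed an $(A,R)$-monic left $\add Q$-approximation, but its cokernel is an extension of $\operatorname{im}\alpha_1$ by $Z$, so the unwanted summand $Z$ does not disappear after one step. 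The paper itself proves nothing here but quotes \citep[Lemma 3.1.8]{Cr2}; the ingredient you are missing is the full version of the exact-sequence lemma \citep[Lemma 3.1.7]{Cr2} (only partially excerpted as Lemma \ref{lem:2.3.2}), which also bounds the relative dominant dimensions of the \emph{end} terms from below in terms of the middle term; applied to the split sequences $0\to M_j\to\bigoplus_{i}M_i\to\bigoplus_{i\neq j}M_i\to 0$ and $0\to \bigoplus_{i\neq j}M_i\to\bigoplus_{i}M_i\to M_j\to 0$, those bounds force $Q\domdim_A M_j\geq Q\domdim_A\bigl(\bigoplus_i M_i\bigr)$ for every $j$.
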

\begin{proof}
	See \citep[Lemma 3.1.8]{Cr2}.
\end{proof}

Recall ${}^{\perp}Q = \{ M \in A\m\cap R\proj \mid {\rm Ext}_A^{i>0}(M, Q)=0\}$. 
The following is proved in \citep[Proposition 3.1.11.]{Cr2}.

\begin{Prop}\label{prop:2.3.4} Assume ${\rm Ext}^{i>0}_A(Q, Q)=0$, and
	$M \in {}^{\perp}Q$. An exact sequence 
	$$0\to M\to Q_1\to \ldots \to Q_n$$
	yields $Q\domdim_{(A, R)}(M)\geq n$ if and only if $Q_i \in \add Q$ and the
	cokernel of $Q_{n-1}\to Q_n$ belongs to ${}^{\perp}Q$.
\end{Prop}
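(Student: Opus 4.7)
The plan is to slice the given exact sequence $0 \to M \to Q_1 \to \cdots \to Q_n$ into the short exact sequences
\begin{equation*}
0 \to K_{i-1} \to Q_i \to K_i \to 0, \qquad 1 \leq i \leq n,
\end{equation*}
with $K_0 = M$, $K_i = \im(Q_i \to Q_{i+1})$ for $1 \leq i \leq n-1$, and $K_n = C$ the cokernel of $Q_{n-1}\to Q_n$. Both directions of the equivalence then reduce to tracking $\Ext_A^{*}(-,Q)$ through these slices, exploiting that $\Ext_A^{j>0}(Q_i,Q)=0$ whenever $Q_i\in\add Q$ (by the hypothesis $\Ext_A^{i>0}(Q,Q)=0$ and additivity of $\Ext$).

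For the ``only if'' direction, assume the sequence witnesses $Q\domdim_{(A,R)}(M)\geq n$. By definition $Q_i\in\add Q$, the sequence is $(A,R)$-exact (so each slice is $R$-split and each $K_i$ lies in $R\proj$), and by Lemma \ref{lem:2.3.1} each inclusion $K_{i-1}\hookrightarrow Q_i$ is a left $\add Q$-approximation, i.e.\ $\Hom_A(Q_i,Q)\twoheadrightarrow \Hom_A(K_{i-1},Q)$. I would prove $K_i\in{}^{\perp}Q$ by ascending induction on $i$, starting from $K_0=M\in{}^{\perp}Q$. For $j\geq 2$, the long exact sequence of $\Ext_A^{*}(-,Q)$ applied to the $i$-th slice gives $\Ext_A^j(K_i,Q)=0$ from the inductive vanishing $\Ext_A^{j-1}(K_{i-1},Q)=0$ and $\Ext_A^j(Q_i,Q)=0$. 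For $j=1$, the surjectivity above forces the connecting map $\Hom_A(K_{i-1},Q)\to \Ext_A^1(K_i,Q)$ to vanish, so $\Ext_A^1(K_i,Q)\hookrightarrow \Ext_A^1(Q_i,Q)=0$. Setting $i=n$ yields $C\in{}^{\perp}Q$.

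For the ``if'' direction, assume $Q_i\in\add Q$ and $K_n=C\in{}^{\perp}Q$, and prove by descending induction that each $K_i\in{}^{\perp}Q$. Given $K_i\in{}^{\perp}Q$, the slice $0\to K_{i-1}\to Q_i\to K_i\to 0$ is $(A,R)$-split (since $K_i\in R\proj$), so $K_{i-1}\in R\proj$, and the long exact sequence of $\Ext_A^{*}(-,Q)$ combined with $\Ext_A^{j>0}(Q_i,Q)=0$ gives $\Ext_A^{j>0}(K_{i-1},Q)=0$. Since then every $\Ext_A^1(K_i,Q)=0$, each slice remains exact under $\Hom_A(-,Q)$; splicing them produces an $(A,R)$-exact sequence in $\add Q$ that remains exact under $\Hom_A(-,Q)$, hence, by Lemma \ref{lem:2.3.1}, witnesses $Q\domdim_{(A,R)}(M)\geq n$.

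The hard part is the interaction between the categorical condition ``remains exact under $\Hom_A(-,Q)$'' and the cohomological vanishing in ${}^{\perp}Q$: at degree $j=1$ in the ``only if'' direction, surjectivity of the restriction map alone does not give $\Ext_A^1(K_i,Q)=0$, but combined with $\Ext_A^1(Q_i,Q)=0$ coming from $Q_i\in\add Q$ it does. A secondary technical point is tracking $R$-projectivity of each intermediate $K_i$ along both inductions, needed both to stay inside $A\m\cap R\proj$ and to ensure that each slice is $(A,R)$-split.
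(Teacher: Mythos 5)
Your argument is correct. Note that the paper itself gives no proof of Proposition \ref{prop:2.3.4}, deferring to \citep[Proposition 3.1.11]{Cr2}; your slicing of the coresolution into $R$-split short exact sequences, the two inductions on the syzygies $K_i$ via the long exact sequence for $\Ext_A^{*}(-,Q)$, and the use of Lemma \ref{lem:2.3.1} to translate between ``left $\add Q$-approximation'' and surjectivity of $\Hom_A(Q_i,Q)\to\Hom_A(K_{i-1},Q)$ constitute exactly the standard dimension-shifting proof one expects there, including the correct handling of the degree-one case and of $R$-projectivity of the intermediate kernels.
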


The following application of Lemma \ref{lem:2.3.2} will be useful later.

\begin{Cor}\label{ddusingcoresolutions} Assume $Q\in {}^\perp Q$. Let $M\in A\m\cap R\proj$, and consider an $(A, R)$-exact
	sequence
	$$0\to M \to Q_1 \to \ldots \to Q_t \to X\to 0$$
	with $Q_i\in \add Q$. If ${\rm Ext}_A^i(X, Q)=0$ for $1\leq i\leq t$, then 
	\begin{align*}
		Q\domdim_{(A, R)}M = t + Q\domdim_{(A, R)}X.
	\end{align*}
\end{Cor}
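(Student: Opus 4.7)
The plan is to proceed by induction on $t$, splitting off one term at a time and applying Lemma \ref{lem:2.3.2}. The base case $t=0$ reduces to $M\cong X$ and is immediate. For the inductive step, set $K=\im(Q_1\to Q_2)$ and split the given sequence into the short exact sequence $0\to M\to Q_1\to K\to 0$ together with the $(A,R)$-exact sequence $0\to K\to Q_2\to\cdots\to Q_t\to X\to 0$ of length $t-1$. The hypothesis $\Ext_A^i(X,Q)=0$ for $1\leq i\leq t$ restricts to the inductive hypothesis for the shorter sequence, so by induction $Q\domdim_{(A,R)} K = (t-1) + Q\domdim_{(A,R)} X$.

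The next step is to verify that $0\to M\to Q_1\to K\to 0$ remains exact under $\Hom_A(-, Q)$, which reduces to showing $\Ext_A^1(K, Q) = 0$. Breaking the shorter sequence into short exact pieces and using that each $Q_j\in \add Q\subseteq {}^\perp Q$ gives $\Ext_A^{>0}(Q_j, Q) = 0$, a standard dimension-shifting argument yields $\Ext_A^1(K, Q)\cong \Ext_A^t(X, Q)$, which vanishes by hypothesis. Since $Q_1\in \add Q$ gives $Q\domdim_{(A,R)} Q_1 = \infty$, Lemma \ref{lem:2.3.2} applied to the short exact sequence $0\to M\to Q_1\to K\to 0$ yields $Q\domdim_{(A,R)} M = 1 + Q\domdim_{(A,R)} K = t + Q\domdim_{(A,R)} X$, completing the induction.

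The main obstacle is that part (b) of Lemma \ref{lem:2.3.2} as recalled in the excerpt is phrased for the finite case. When one of the relative dominant dimensions involved is infinite, one invokes the full form of \citep[Lemma 3.1.7]{Cr2} (with the convention $\infty + 1 = \infty$); alternatively one argues directly via Lemma \ref{lem:2.3.1}, splicing an $\add Q$-coresolution of $X$ of arbitrary prescribed length with the given sequence and checking via the same dimension-shifting computation that each intermediate $(A,R)$-monomorphism in the spliced sequence is a left $\add Q$-approximation, hence the spliced sequence remains exact under $\Hom_A(-, Q)$.
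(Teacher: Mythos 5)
The paper does not prove this statement at all; it simply cites \citep[Corollary 3.1.12]{Cr2}, so there is no internal argument to compare against. Your reconstruction is correct and is the expected one: induct on $t$, peel off the short exact sequence $0\to M\to Q_1\to K\to 0$, use dimension shifting along the remaining sequence (valid because each $Q_j\in\add Q\subseteq{}^\perp Q$) to identify $\Ext_A^1(K,Q)$ with $\Ext_A^t(X,Q)=0$, and then apply the two-out-of-three behaviour of $Q\domdim$ on short exact sequences that stay exact under $\Hom_A(-,Q)$. You are also right to flag that Lemma \ref{lem:2.3.2} as recalled in the paper only covers part of the cases (the paper itself says it recalls only ``part of'' \citep[Lemma 3.1.7]{Cr2}); the full cited lemma, or your alternative splicing argument via Lemma \ref{lem:2.3.1}, closes the infinite case. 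Two minor points worth making explicit if you write this up: $K\in R\proj$ because the given sequence is $(A,R)$-exact, so the inductive hypothesis genuinely applies to the truncated sequence; and the surjectivity of $\Hom_A(Q_1,Q)\to\Hom_A(M,Q)$ obtained from $\Ext_A^1(K,Q)=0$ also shows $M\to Q_1$ is a left $\add Q$-approximation, so $Q\domdim_{(A,R)}M\geq 1$ and no degenerate ``$n_2=n_1-1$ with $n_1=0$'' situation arises.
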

\begin{proof}
	See \citep[Corollary 3.1.12.]{Cr2}.
\end{proof}

\subsection{Split quasi-hereditary algebras with duality}

For the definition and general properties of split quasi-hereditary
algebras we refer to \citep{CPS, Rou, cruz2021cellular, Cr1, Cr2}.  In particular, we follow the notation of \cite{cruz2021cellular, Cr1, Cr2}. One of the advantages to use such setup stems from the fact that split quasi-hereditary $R$-algebras $(A, \{\Delta(\lambda)_{\lambda\in \Lambda}\})$ are exactly the algebras so that $(S\otimes_R A, \{S\otimes_R \Delta(\lambda)_{\lambda\in \Lambda}\})$ are quasi-hereditary algebras for every commutative Noetherian ring $S$ which is an $R$-algebra. Concerning the terminology, we remark the word split arises from the endomorphism algebra $\End_A(\St(\l))$ being isomorphic to the ground ring $R$. As it was observed in \cite{Rou}, when $(A, \{\Delta(\lambda)_{\lambda\in \Lambda}\})$ is a split quasi-hereditary $R$-algebra, the objects $T(\l)$ satisfying $\add \bigoplus_{\l\in \L} T(\l)=\mathcal{F}(\Stsim)\cap \mathcal{F}(\Cssim)$ are no longer unique, in contrast to quasi-hereditary algebras over a field. For this reason, we will say that $T$ is a characteristic tilting module of $A$ if $\add T=\mathcal{F}(\Stsim)\cap \mathcal{F}(\Cssim)$ and $T$ is the (basic) characteristic tilting module of $A$ if $A$ is a quasi-hereditary algebra over a field and $T=\bigoplus_{\l\in \L} T(\l)$.

The following prepares the ground for quasi-hereditary covers, constructed from the Ringel dual $R(A)= {\rm End}_A(T)^{op}$ of a quasi-hereditary algebra $A$ with a characteristic tilting module $T$.  To see that Ringel duality is well defined in the integral setup, we refer  to \citep[Subsection 2.2.3]{Cr2}.

\begin{Prop}\label{relativedomdimtorelativecodomdim}
	Let $(A, \{\Delta(\lambda)_{\lambda\in \Lambda}\})$ be a split quasi-hereditary $R$-algebra with a characteristic tilting module $T$. Denote by $R(A)$ the Ringel dual $\End_A(T)^{op}$ of $A$.
	Suppose that $Q\in \add_A T$ is a partial tilting module. Then, 
	\begin{enumerate}[(i)]
		\item $\Hom_A(T, Q)\codomdim_{(R(A), R)} DT=Q\domdim_{(A, R)} T$.
		\item $DQ\domdim{(A, R)} = Q\codomdim_{(A, R)} DA= Q\domdim {(A,R)}.$
	\end{enumerate}
\end{Prop}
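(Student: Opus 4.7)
My plan is to establish both equalities by transporting (co)resolutions through the two natural dualities available on a split quasi-hereditary algebra $A$ with characteristic tilting module $T$: the $R$-linear contravariant duality $D\colon A\m\to A^{op}\m$, and the Ringel duality functor $F:=\Hom_A(T,-)\colon A\m\to R(A)\m$. I shall rely on three key identifications: $F(T)=R(A)$; the isomorphism $F(DA)\cong DT$ of left $R(A)$-modules, obtained from the Hom-tensor adjunction $\Hom_A(T,\Hom_R(A,R))\cong\Hom_R(A\otimes_A T,R)$; and the fact that $F$ restricted to $\add T$ is fully faithful with essential image $R(A)\proj$, and is moreover exact on any sequence whose terms all lie in $\add T$, because $\Ext_A^{>0}(T,\add T)=0$.

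For part (i), starting from an $(A,R)$-exact and $\Hom_A(-,Q)$-exact coresolution $0\to T\to Q_1\to\cdots\to Q_n$ with $Q_i\in\add Q\subseteq\add T$ witnessing $Q\domdim_{(A,R)} T\geq n$, the plan is to apply $F$ (which is exact on this sequence) and then combine with the contravariant $D$-duality, using $F(DA)\cong DT$, to produce an $(R(A),R)$-exact resolution $F(Q_n)\to\cdots\to F(Q_1)\to DT\to 0$ whose terms lie in $\add F(Q)=\add\Hom_A(T,Q)$. The condition \textquotedblleft$\Hom_A(-,Q)$-exact\textquotedblright{} on the $A$-side translates, via the full faithfulness of $F$ on $\add T$ together with the $D$-flip, to \textquotedblleft$\Hom_{R(A)}(\Hom_A(T,Q),-)$-exact\textquotedblright{} on the $R(A)$-side---precisely what is required (by the dual of Lemma \ref{lem:2.3.1}) for the relative codominant dimension of $DT$ with respect to $\Hom_A(T,Q)$ to be at least $n$. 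Reversing each step via the quasi-inverses $-\otimes_{R(A)} T$ and $D$ establishes the equality in both directions.

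For part (ii), the first equality $DQ\domdim_{(A,R)}=Q\codomdim_{(A,R)} DA$ is an immediate application of the general duality formula $Q\codomdim_{(A,R)} X=DQ\domdim_{(A^{op},R)} DX$ recalled in Subsection \ref{sec2.3}, applied with $X=DA$ and using $D\circ D\cong\id$ on $R$-projective modules. For the second equality $Q\codomdim_{(A,R)} DA=Q\domdim_{(A,R)} A$, the plan is to apply the $D$-duality to a witnessing coresolution for the right-hand side, obtaining a resolution of $DA$ on the $A^{op}$-side, and then combine with the Ringel dual translation from part (i) applied on both sides to identify the two quantities with a common value computed over $R(A)$; the transport is clean because the relevant coresolutions live in the tilting subcategory. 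The main obstacle I anticipate is the careful bookkeeping of left/right module structures and the transport of both the $\Hom$-exactness and the $R$-splitness conditions through $F$ and $D$ simultaneously; the $\Ext$-vanishing between tilting modules is what keeps $F$ exact on all relevant sequences, and $D$-duality automatically preserves $R$-splitness.
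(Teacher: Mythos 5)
The paper itself gives no argument for this proposition: both parts are quoted from \cite{Cr2} (Proposition 6.1.1 and Corollary 3.1.5 there), so the only question is whether your independent argument works. It does not, and the gap is in the transport step of part (i). Applying $F=\Hom_A(T,-)$ to a coresolution $0\to T\to Q_1\to\cdots\to Q_n$ with $Q_i\in\add Q\subseteq\add T$ does give an exact sequence (the syzygies are $\Hom_A(T,-)$-acyclic, as you say), but its left end is $F(T)\cong R(A)$, the \emph{regular} module --- not $DT$. The identification $F(DA)\cong DT$ is irrelevant here because $DA$ does not occur in the sequence, and a subsequent application of $D$ lands in injective right $R(A)$-modules, not in $\add\Hom_A(T,Q)$. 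The obstruction is structural: any additive contravariant functor on $\add_A T$ sending $T$ to $DT$ must send $Q$ (a summand of some $T^k$) into $\add DT$, i.e.\ to a partial tilting $R(A)$-module, whereas the terms you need, $\Hom_A(T,Q)$, are projective $R(A)$-modules and lie in $\add DT$ only when $Q$ is injective. Hence no single functorial transport of the witnessing coresolution can prove (i); one needs a finer argument, e.g.\ the M\"uller-type characterisation of relative (co)dominant dimension by $\Ext$-vanishing over $\End_A(Q)^{op}\cong\End_{R(A)}(\Hom_A(T,Q))^{op}$, which is the kind of tool \cite{Cr2} develops for exactly this purpose. (Note also that (i) cannot reduce to $Q\codomdim_{(A,R)}DA$: by Theorem \ref{mainresult} that quantity is \emph{twice} $Q\domdim_{(A,R)}T$.)

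In part (ii), the first equality is indeed the definitional identity $Q\codomdim_{(A,R)}X=DQ\domdim_{(A^{op},R)}DX$ applied to $X=DA$. But the second equality is the left--right symmetry of the relative dominant dimension of the regular module, and your dualisation only restates the tautology $Q\domdim_{(A,R)}A=DQ\codomdim_{(A^{op},R)}DA$ with $DA$ viewed as a \emph{right} $A$-module; the quantity in the statement is $Q\codomdim_{(A,R)}DA$ with $DA$ a \emph{left} $A$-module resolved by left modules in $\add Q$. Bridging the two sides is precisely the content of \cite[Corollary 3.1.5]{Cr2} (the relative analogue of M\"uller's theorem that dominant dimension is left--right symmetric), and invoking ``the Ringel dual translation from part (i)'' does not help, since (i) concerns $T$ and $DT$ rather than $A$ and $DA$. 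Both halves of your argument therefore need to be replaced by, or reduced to, double-centraliser/$\Ext$-vanishing characterisations rather than transport of resolutions.
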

\begin{proof}
	For (i), see \citep[Proposition 6.1.1]{Cr2}. For (ii), see \citep[Corollary 3.1.5]{Cr2}.
\end{proof} Recall that $\Hom_A(T, DA)\simeq DT$ is a characteristic tilting module over $R(A)$.

 \begin{Prop}\label{dominantandcodominantoftiltingcomparison}
        Let $(A, \{\Delta(\lambda)_{\lambda\in \Lambda}\})$ be a split quasi-hereditary algebra over a field $k$.
        Assume that there exists a simple preserving duality $\spd (-)\colon A\m\rightarrow A\m$. Let $T$ be the characteristic tilting module of $A$ and assume that $Q\in \add_A T$.
 Then, \begin{enumerate}[(i)]
                \item $\spd \St(\l)\simeq \Cs(\l)$ for all $\l\in \L$;
        \item $\spd T(\l)\simeq T(\l)$ for all indecomposable modules of $T$;
        \item $Q\domdim_{(A, R)} T=Q\codomdim_{(A, R)} T$.
 \end{enumerate}
\end{Prop}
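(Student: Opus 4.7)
The plan is to prove (i), (ii), and (iii) in order. Parts (i) and (ii) are formal consequences of the axioms of a simple preserving duality and prepare the ground for (iii), which carries the real content. Throughout I will exploit the three defining properties of $\spd$: it is an exact $k$-linear contravariant involution that fixes every simple module.

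For (i), these properties imply that $\spd$ preserves composition factor multiplicities and swaps tops with socles. Hence $\spd \St(\l)$ is indecomposable, has simple socle $L(\l)$ with $[\spd \St(\l) : L(\l)] = 1$, and all its other composition factors are of the form $L(\mu)$ with $\mu < \l$. Together with the equality $\dim_k \spd \St(\l) = \dim_k \St(\l) = \dim_k \Cs(\l)$, the universal characterisation of $\Cs(\l)$ as the maximal submodule of the injective envelope $I(\l)$ whose composition factors lie in $\{L(\mu) : \mu \leq \l\}$ identifies $\spd \St(\l)$ with $\Cs(\l)$. For (ii), applying $\spd$ to a $\St$-filtration (respectively, a $\Cs$-filtration) of $T(\l)$ produces, by (i), a $\Cs$-filtration (respectively, $\St$-filtration) of $\spd T(\l)$, so $\spd T(\l) \in \mathcal{F}(\Stsim) \cap \mathcal{F}(\Cssim) = \add T$. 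Exactness preserves indecomposability and composition factor multiplicities, so $\l$ remains the unique highest weight of $\spd T(\l)$ with multiplicity one, forcing $\spd T(\l) \simeq T(\l)$. In particular $\spd Q \simeq Q$ for every $Q \in \add T$.

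For (iii), set $n := Q\domdim_{(A, R)} T$ and choose a witnessing $(A, R)$-exact sequence
$$0 \to T \to Q_1 \to \cdots \to Q_n$$
with $Q_i \in \add Q$ that remains exact under $\Hom_A(-, Q)$. Applying the exact $k$-linear contravariant functor $\spd$, and using (ii) to identify $\spd T \simeq T$ and $\spd Q_i \in \add Q$, yields an $(A, R)$-exact sequence
$$Q_n \to \cdots \to Q_1 \to T \to 0.$$
The natural isomorphism $\Hom_A(Q, \spd M) \simeq \Hom_A(M, \spd Q) \simeq \Hom_A(M, Q)$, coming from the duality together with $\spd Q \simeq Q$, transforms exactness of the original sequence under $\Hom_A(-, Q)$ into exactness of the dualised sequence under $\Hom_A(Q, -)$. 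The codominant analogue of Lemma \ref{lem:2.3.1} then gives $Q\codomdim_{(A, R)} T \geq n$; applying the symmetric argument to a right $\add Q$-resolution delivers the reverse inequality. The one point requiring care is that the approximation conditions transport correctly under $\spd$, and this hinges precisely on $\spd Q \simeq Q$ from (ii).
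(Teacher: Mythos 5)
Your proposal is correct and follows essentially the same route as the paper: the paper disposes of (i) and (ii) by applying $\spd$ to the defining exact sequences, and for (iii) it cites \citep[Proposition 3.1.6]{Cr2}, whose content is exactly the dualization argument you spell out (dualize a witnessing coresolution, use $\spd T\simeq T$, $\spd Q\simeq Q$ and $\Hom_A(Q,\spd M)\simeq \Hom_A(M,Q)$ to transport exactness under $\Hom_A(-,Q)$ to exactness under $\Hom_A(Q,-)$, then argue symmetrically). One small point in (i): the equality $\dim_k\St(\l)=\dim_k\Cs(\l)$ is not available a priori for an arbitrary quasi-hereditary algebra (it already fails for the path algebra of $A_2$) and is normally deduced \emph{from} (i), so as written your argument is mildly circular; this is easily repaired by observing that $\spd\St(\l)$ embeds into $\Cs(\l)$ by the maximality characterisation of $\Cs(\l)$ inside $I(\l)$ while, dually, $\spd\Cs(\l)$ is a quotient of $\St(\l)$, and the two resulting dimension inequalities force both comparison maps to be isomorphisms.
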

\begin{proof}(i) and (ii) follow by applying the simple preserving duality to the canonical exact sequences defining $\St(\l)$ and $T(\l)$, respectively.
        For (iii), see \citep[Proposition 3.1.6]{Cr2}.
\end{proof}

Let $(A, \{\Delta(\lambda)_{\lambda\in \Lambda}\})$ be a split quasi-hereditary algebra over a field $k$.
The \emph{$\Cs$-filtration dimension} of $X$, denoted by $\dim_{\mathcal{F}(\Cs)} X$,  is the minimal $n\geq 0$ such that 
there exists an exact sequence
$$0\rightarrow X\rightarrow M_0\rightarrow \cdots \rightarrow  M_n\rightarrow 0$$
with $M_0, \ldots, M_n \in \mathcal{F}(\Cs)$. 
Analogously, the $\St$-filtration dimension is defined. The $\Cs$-filtration dimensions first appeared in \cite{zbMATH03968901} in the study of cohomology of algebraic groups.

$\Cs$ and $\St$-filtration dimensions play a crucial role in \cite{zbMATH02116171} and \cite{zbMATH02105773} establishing that the global dimension of a quasi-hereditary algebra having a simple preserving duality is always an even number.
For us, they are of importance due to the following result.
\begin{Prop}\label{filtrationdimensionext}
                Let $(A, \{\Delta(\lambda)_{\lambda\in \Lambda}\})$ be a split quasi-hereditary algebra over a field $k$.
        Assume that there exists a simple preserving duality $\spd (-)\colon A\m\rightarrow A\m$.  If $M\in A\m$ satisfying $\dim_{\mathcal{F}(\St)} M=t<+\infty$, then $\Ext_A^{2t}(M, \spd M)\neq 0$.
\end{Prop}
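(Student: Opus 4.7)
The plan is to prove this by induction on $t = \dim_{\mathcal{F}(\St)} M$, in the spirit of Mazorchuk--Ovsienko's argument for $\gldim A = 2\pdim T$. For the base case $t = 0$, $M \in \mathcal{F}(\St)$ is nonzero and hence admits a simple quotient $L(\mu)$; contravariance of $\spd$ combined with $\spd L(\mu) \cong L(\mu)$ yields an embedding $L(\mu) \hookrightarrow \spd M$, making the composite $M \twoheadrightarrow L(\mu) \hookrightarrow \spd M$ a nonzero element of $\Hom_A(M, \spd M)$.

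For the inductive step, I pick the first short exact sequence $0 \to K \to N \to M \to 0$ of a minimal $\mathcal{F}(\St)$-resolution of $M$. The standard characterisation $\dim_{\mathcal{F}(\St)} X = \max\{\, i \geq 0 : \Ext_A^i(X, \Cs(\nu)) \neq 0 \text{ for some } \nu \,\}$ for split quasi-hereditary algebras over a field ensures $K \in \mathcal{F}(\St)$ with $\dim_{\mathcal{F}(\St)} K = t-1$, and the $\spd$-dualised sequence $0 \to \spd M \to \spd N \to \spd K \to 0$ satisfies $\spd N \in \mathcal{F}(\Cs)$ and $\dim_{\mathcal{F}(\Cs)} \spd K = t-1$. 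The vanishings $\Ext_A^{>t-1}(N, \spd K) = 0$ and $\Ext_A^{>t}(M, \spd N) = 0$, both consequences of $\Ext_A^{>0}(\mathcal{F}(\St), \mathcal{F}(\Cs)) = 0$ together with dimension shifting, feed into the long exact sequences of $\Hom_A(-, \spd K)$ applied to the first SES and of $\Hom_A(M, -)$ applied to the dualised SES, producing
\[
\Ext_A^j(K, \spd K) \cong \Ext_A^{j+1}(M, \spd K) \text{ for } j \geq t, \qquad \Ext_A^j(M, \spd K) \cong \Ext_A^{j+1}(M, \spd M) \text{ for } j \geq t+1.
\]
Composing at $j = 2t-2$ and $j = 2t-1$ yields $\Ext_A^{2t-2}(K, \spd K) \cong \Ext_A^{2t}(M, \spd M)$, and the inductive hypothesis finishes the argument for all $t \geq 2$.

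The hard part will be the boundary case $t = 1$, where the second isomorphism chain fails at $j = 1$, and the long exact sequences only yield $\Ext_A^2(M, \spd M) = \coker\bigl(\Ext_A^1(M, \spd N) \to \Ext_A^1(M, \spd K)\bigr)$. My plan for this case is to fix $\mu$ with $\Ext_A^1(M, \Cs(\mu)) \neq 0$ (available by the Ext-characterisation) and exploit the minimality of the chosen $\mathcal{F}(\St)$-resolution: minimality forces the canonical map $K \twoheadrightarrow L(\mu) \hookrightarrow \spd K$ not to extend along $N \to M$, producing a nonzero class in the cokernel. Equivalently, one may view this as forming a nonzero Yoneda 2-extension in $\Ext_A^2(M, \spd M)$ from the non-split classes $\alpha \in \Ext_A^1(M, \Cs(\mu))$ and $\spd \alpha \in \Ext_A^1(\St(\mu), \spd M)$ mediated by the canonical factorisation $\St(\mu) \twoheadrightarrow L(\mu) \hookrightarrow \Cs(\mu)$; verifying that this splice survives in $\Ext_A^2(M, \spd M)$ is where the minimality of the resolution is essential.
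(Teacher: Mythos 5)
Your base case $t=0$ and your reduction for $t\geq 2$ are both correct: the dimension-shifting isomorphisms $\Ext_A^j(K,\spd K)\cong \Ext_A^{j+1}(M,\spd K)$ for $j\geq t$ and $\Ext_A^j(M,\spd K)\cong \Ext_A^{j+1}(M,\spd M)$ for $j\geq t+1$ do follow from the stated vanishings, and composing them at $j=2t-2$ and $j=2t-1$ is legitimate precisely when $t\geq 2$. But this means your induction bottoms out at $t=1$, not at $t=0$: already the case $t=2$ requires the case $t=1$, so everything beyond the trivial case $t=0$ rests on the part you have only sketched. That boundary case is exactly where the substance of the result lies --- it is the content of the Mazorchuk--Ovsienko argument that the paper itself does not reprove but simply cites (the paper's ``proof'' is the reference to their Corollary 6) --- and your plan for it does not yet work.

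Concretely, two things break in the $t=1$ sketch. First, the Yoneda splice as described does not typecheck: to compose $\alpha\in\Ext_A^1(M,\Cs(\mu))$ with $\spd\alpha\in\Ext_A^1(\St(\mu),\spd M)$ you would need a morphism $\Cs(\mu)\to\St(\mu)$, whereas the canonical map $\St(\mu)\twoheadrightarrow L(\mu)\hookrightarrow\Cs(\mu)$ goes the other way; one must instead lift $\alpha$ to $\Ext_A^1(M,L(\mu))$ and descend $\spd\alpha$ to $\Ext_A^1(L(\mu),\spd M)$, and neither step is automatic. Second, even granting a nonzero class in $\Ext_A^1(M,\spd K)$, you must show it survives in $\coker\bigl(\Ext_A^1(M,\spd N)\to\Ext_A^1(M,\spd K)\bigr)$; unwinding the two long exact sequences, this amounts to exhibiting $\phi\in\Hom_A(K,\spd K)$ outside the subspace $(\spd\iota)_*\Hom_A(K,\spd N)+\iota^*\Hom_A(N,\spd K)$, where $\iota\colon K\hookrightarrow N$. ``Minimality of the resolution'' is not a defined notion here, and as invoked it only addresses the second summand; it is also not clear that $L(\mu)$, for $\mu$ chosen with $\Ext_A^1(M,\Cs(\mu))\neq 0$, is a quotient of $K$ at all. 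Until the $t=1$ case is written out in full (for instance by adapting the maximal-weight argument of Mazorchuk--Ovsienko, which works uniformly in $t$ and would render the induction unnecessary), the proof is incomplete.
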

\begin{proof}
        See \citep[Corollary 6]{zbMATH02105773}.
\end{proof}

\subsection{Cover theory}

The concept of a cover, and in particular, of a split quasi-hereditary cover was introduced in \cite{Rou} to give an abstract framework to connections in representation theory like Schur--Weyl duality. Given a split quasi-hereditary algebra $(A, \{\Delta(\lambda)_{\lambda\in \Lambda}\})$ over a commutative Noetherian ring $R$ and a finitely generated projective $A$-module $P$, let $B:= {\rm End}_A(P)^{op}$. We say that $(A, P)$ is a \emph{split quasi-hereditary cover} of 
$B$ if the restriction of the functor $F:=\Hom_A(P, -)\colon A\m\rightarrow B\m$,  known as Schur functor, to $A\proj$ is fully faithful.
Given, in addition, $i\in \mathbb{N}\cup\{-1, 0, +\infty \}$, following the notation of \cite{Cr1}, we say that $(A, P)$ is an $i-\mathcal{F}(\Stsim)$ (quasi-hereditary) cover of $B$ if the following conditions hold:
\begin{itemize}
	\item $(A, P)$ is a split quasi-hereditary cover of $\End_A(P)^{op}$;
	\item  The restriction of $F$  to $\mathcal{F}(\Stsim)$ is faithful;
	\item The Schur functor $F$ induces bijections $\Ext_A^j(M, N)\simeq \Ext^j_{B}(FM, FN)$, for every $M, N\in \mathcal{F}(\Stsim)$ and $0\leq j\leq i$;
\end{itemize}

Here $\mathcal{F}(\Stsim)$ denotes the resolving subcategory of $A\m\cap R\proj$ whose modules admit a finite filtration into direct summands of direct sums of standard modules $\St(\l)$, $\l\in \L$.

The optimal value of the quality of a cover is known as the Hemmer-Nakano dimension. More precisely, if $(A, P)$ is a $(-1)-\mathcal{F}(\Stsim)$ (quasi-hereditary) cover of $B$, the Hemmer-Nakano dimension of $\mathcal{F}(\Stsim)$ with respect to $F$ is $i\in \mathbb{N}\cup\{-1, 0, +\infty \}$ if $(A, P)$ is an $i-\mathcal{F}(\Stsim)$ (quasi-hereditary) cover of $\End_A(P)^{op}$ but $(A, P)$ is not an $(i+1)-\mathcal{F}(\Stsim)$ (quasi-hereditary) cover of $B$. The Hemmer-Nakano dimension of $\mathcal{F}(\Stsim)$ is denoted by $\HN_{F}(\mathcal{F}(\Stsim))$.

Major tools to compute Hemmer-Nakano dimensions are classical dominant dimension and relative dominant dimensions. This idea can be traced back to \cite{FK} which was later amplified in several directions in \cite{Cr1} and in \cite{Cr2}. This principle is briefly summarized in the following result proved in \citep[Theorem 5.3.1., Corollary 5.3.4.]{Cr2}.
Note that ${\rm Hom}_A(T, Q)$ is projective as a $B$-module.

\begin{Theorem}\label{thm:2.5.1}
		Let $R$ be a commutative Noetherian ring. Let $(A, \{\Delta(\lambda)_{\lambda\in \Lambda}\})$ be a split quasi-hereditary $R$-algebra with a characteristic tilting module $T$. Denote by $R(A)$ the Ringel dual $\End_A(T)^{op}$ of $A$.
	Assume that $Q\in \add T$ is a (partial) tilting module of $A$. Then, the following assertions hold.
	\begin{enumerate}[(a)]
		\item 	If $Q\codomdim_{(A, R)} T \geq n\geq 2$, then $(R(A), \Hom_A(T, Q))$ is an $(n-2)$-$\mathcal{F}(\Stsim_{R(A)})$ split quasi-hereditary cover of $\End_A(Q)^{op}$.
		\item  Assume, in addition, that $R$ is a field. Then,  $Q\codomdim_{(A, R)} T \geq n\geq 2$ if and only if $(R(A), \Hom_A(T, Q))$ is an $(n-2)$-$\mathcal{F}(\Stsim_{R(A)})$ split quasi-hereditary cover of $\End_A(Q)^{op}$.
	\end{enumerate}
\end{Theorem}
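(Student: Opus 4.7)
The plan is to transport the hypothesis through Ringel duality to the Ringel-dual side, and then extract the cover quality from the resulting $\add P$-approximation coresolution of the regular module. Set $P := \Hom_A(T, Q)$ and write $\mathcal{R} := \Hom_A(T, -)$ for the Ringel duality functor. Since $Q \in \add T$, $\mathcal{R}$ restricts to a fully faithful equivalence $\add T \to R(A)\proj$, so $P$ is a finitely generated projective $R(A)$-module, $\End_{R(A)}(P)^{op} \simeq \End_A(Q)^{op}$, and the cover axiom is automatic; in particular, $(R(A), P)$ is already a split quasi-hereditary cover of $\End_A(Q)^{op}$, independently of the codominant dimension hypothesis.

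Next, I would translate $Q\codomdim_{(A,R)} T \geq n$ to the Ringel-dual side. Pick an $(A, R)$-exact sequence
$$Q_n \to Q_{n-1} \to \cdots \to Q_1 \to T \to 0$$
with $Q_i \in \add Q$ remaining exact under $\Hom_A(Q, -)$. By an inductive argument (using the dual of Lemma \ref{lem:2.3.1} together with $Q \in \add T$), the successive images all lie in $T^\perp = \mathcal{F}(\Cssim)$, so $\mathcal{R}$ is exact on the whole sequence and produces an $(R(A), R)$-exact sequence
$$P_n \to \cdots \to P_1 \to R(A) \to 0$$
with $P_i \in \add P$. The natural isomorphism $\Hom_{R(A)}(P, \mathcal{R}X) \simeq \Hom_A(Q, X)$ for $X \in \mathcal{F}(\Cssim)$, arising from full faithfulness of $\mathcal{R}$, then shows that the image sequence remains exact under $\Hom_{R(A)}(P, -)$. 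Therefore $P\codomdim_{(R(A), R)} R(A) \geq n$.

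Statement (a) now follows by a direct homological argument: the length-$n$ $\add P$-approximation coresolution of $R(A)$, exact under the Schur functor $F := \Hom_{R(A)}(P, -)$, allows the computation of $\Ext^j_{R(A)}(M, N)$ for $M, N \in \mathcal{F}(\Stsim)$ through $F$ and a projective resolution up to degree $n - 2$, while simultaneously yielding faithfulness of $F$ on $\mathcal{F}(\Stsim)$. For (b), the converse over a field is obtained by reversing the procedure: an $(n-2)$-$\mathcal{F}(\Stsim)$ cover forces a length-$n$ $\add P$-coresolution of $R(A)$ remaining exact under $F$, which pulled back through the inverse of $\mathcal{R}$ restricted to $\mathcal{F}(\Cssim)$ recovers the required $\add Q$-coresolution of $T$, giving $Q\codomdim_{(A,R)} T \geq n$.

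The main obstacle is the inductive step verifying that the intermediate images in the $\add Q$-coresolution of $T$ lie in $T^\perp$: this is exactly what makes Ringel duality act on the entire sequence and not merely on its endpoints, and it is where the codominant dimension hypothesis does its real work. A secondary issue, relevant for (b), is that the sharp numerical detection of codominant dimension from the cover quality requires $R$ to be a field, since flatness subtleties over a general commutative Noetherian ring can obstruct such a recovery.
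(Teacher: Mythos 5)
This theorem is not proved in the paper itself --- it is quoted from the reference [Cr2, Theorem 5.3.1, Corollary 5.3.4] --- so your argument can only be measured against the mechanism the paper visibly uses when it transports relative (co)dominant dimensions across Ringel duality, namely the proof of Lemma \ref{mainresultlemma}. Measured that way, your proof breaks at exactly the step you single out as ``the main obstacle''. You claim that in an exact sequence $Q_n\to\cdots\to Q_1\to T\to 0$ witnessing $Q\codomdim_{(A,R)}T\geq n$ the successive kernels lie in $T^{\perp}=\mathcal{F}(\Cssim)$, so that the covariant functor $\Hom_A(T,-)$ stays exact and yields a right $\add P$-resolution $P_n\to\cdots\to P_1\to R(A)\to 0$ of the regular module. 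This is false whenever $\add Q\neq\add T$: putting $C_1=\ker(Q_1\to T)$, the exact sequence $\Hom_A(T,Q_1)\to\Hom_A(T,T)\to\Ext_A^1(T,C_1)\to\Ext_A^1(T,Q_1)=0$ shows that $\Ext_A^1(T,C_1)\neq 0$, because $\id_T$ can only factor through $Q_1\in\add Q$ if $T$ is a direct summand of $Q_1$, which forces $\add Q=\add T$. (What is true is that the kernels lie in $\mathcal{F}(\Stsim)={}^{\perp}T$, by closure under kernels of epimorphisms; but that makes the \emph{contravariant} functor $\Hom_A(-,T)$ exact on the sequence, not the covariant one you apply.) Consistently with this, your conclusion $P\codomdim_{(R(A),R)}R(A)\geq n$ cannot hold in any nontrivial case: a surjection $P_1\twoheadrightarrow R(A)$ with $P_1\in\add P$ would put every indecomposable projective $R(A)$-module into $\add P$, i.e.\ again $\add Q=\add T$.

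Two further problems compound this. First, the cover axiom is not ``automatic'': full faithfulness of $\Hom_{R(A)}(P,-)$ on $R(A)\proj$ is the double centralizer property $R(A)\simeq\End_{\End_A(Q)^{op}}(P)$, which is strictly stronger than the isomorphism $\End_{R(A)}(P)^{op}\simeq\End_A(Q)^{op}$ you deduce from full faithfulness of $\Hom_A(T,-)$ on $\add T$; this is precisely where the hypothesis $n\geq 2$ is consumed, so locating all of its work in the higher Ext comparison misplaces the argument. Second, even if your resolution existed, a right $\add P$-resolution of the regular module computes $P\codomdim_{(R(A),R)}R(A)$, which is not the invariant governing either the double centralizer property (that is $P\domdim_{(R(A),R)}R(A)=P\codomdim_{(R(A),R)}D(R(A))$, by Proposition \ref{relativedomdimtorelativecodomdim}(ii)) or the Hemmer--Nakano dimension: the quantity the paper actually manipulates is the relative codominant dimension of the characteristic tilting module $DT$ of $R(A)$ with respect to $P$, obtained via Proposition \ref{relativedomdimtorelativecodomdim}(i) and the closure of $\mathcal{F}(\Stsim_{R(A)})$ under kernels of epimorphisms, as in the proof of Lemma \ref{mainresultlemma}. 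A correct transport of the hypothesis has to run through $DT$ (equivalently, through the opposite algebra after applying $D$), not through a right $\add P$-resolution of $R(A)$.
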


Let $B$ be a projective Noetherian $R$-algebra, $(A, P)$ be an $(-1)-\mathcal{F}(\Stsim)$ (quasi-hereditary) cover of $B$ and $(A', P')$ be an $(-1)-\mathcal{F}(\Stsim')$ (quasi-hereditary) cover of $B$. We say that $(A, P)$ is equivalent to $(A', P')$ as quasi-hereditary covers if there exists an equivalence functor $H\colon A\m\rightarrow A'\m$ which restricts to an equivalence of categories between $\mathcal{F}(\Stsim)$ and $\mathcal{F}(\Stsim')$ making the following diagram commutative
\begin{equation*}
	\begin{tikzcd}
		A\m \arrow[d, "H"]  \arrow[r, "\Hom_A(P{,} -)", outer sep=0.75ex] &  B\m \arrow[d, "L"] \\
		A'\m  \arrow[r, "\Hom_{A'}(P'{,} -)", outer sep=0.75ex]  &B\m
	\end{tikzcd},
\end{equation*}
for some equivalence of categories $L$. The first application of uniqueness of covers goes back to \cite{Rou}. Split quasi-hereditary covers with higher values of Hemmer-Nakano dimension associated to them are essentially unique. In fact, this is due to the following result which can be found in \citep[Corollary 4.3.6.]{Cr1}.
\begin{Cor}\label{cor:2.5.2}
	Let $B$ be a projective Noetherian $R$-algebra, $(A, P)$ be a $1-\mathcal{F}(\Stsim)$ (quasi-hereditary) cover of $B$ and $(A', P')$ be a $1-\mathcal{F}(\Stsim')$ (quasi-hereditary) cover of $B$.  If there exists an exact equivalence $L\colon B\m\rightarrow B\m$ which restricts to an exact equivalence between $\mathcal{F}(\Hom_A(P, -)\Stsim)$ and $\mathcal{F}(\Hom_{A'}(P', -)\Stsim')$, then $(A, P)$ is equivalent as split quasi-hereditary cover to $(A', P')$.
\end{Cor}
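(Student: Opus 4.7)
The plan is to build the equivalence $H\colon A\m \to A'\m$ in three stages: first produce an equivalence between the $\St$-filtered subcategories; then show it preserves projectivity; and finally invoke Morita theory to extend to the full module categories.

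I would begin by showing that the Schur functor $F := \Hom_A(P, -)$ restricts to an exact equivalence $\overline{F}\colon \mathcal{F}(\Stsim) \xrightarrow{\sim} \mathcal{F}(F\Stsim)$, where the target denotes the full subcategory of $B\m$ consisting of modules admitting a filtration by summands of direct sums of $F\Stsim(\lambda)$. Fullness and faithfulness follow from the 1-$\mathcal{F}(\Stsim)$ cover hypothesis (the $j = 0$ bijection on $\Hom$). Essential surjectivity is proved by induction on the filtration length: given $Y \in \mathcal{F}(F\Stsim)$ and a short exact sequence $0 \to Y_1 \to Y \to Y_2 \to 0$ in $B\m$ with $Y_1$ and $Y_2$ already lifted to $\widetilde{Y}_1, \widetilde{Y}_2 \in \mathcal{F}(\Stsim)$, the $\Ext^1$-bijection provided by the $j = 1$ condition furnishes a preimage extension in $A\m$ whose image under $F$ realises $Y$ up to isomorphism. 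The analogous construction produces $\overline{F'}\colon \mathcal{F}(\Stsim') \xrightarrow{\sim} \mathcal{F}(F'\Stsim')$.

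Set $H_0 := \overline{F'}^{-1} \circ L \circ \overline{F}\colon \mathcal{F}(\Stsim) \to \mathcal{F}(\Stsim')$. This is an exact equivalence which, together with its inverse, preserves $\Ext^1$ between $\St$-filtered modules, since each of the three factors does. Next, I would use that $\mathcal{F}(\Stsim)$ is a resolving subcategory (in particular, closed under kernels of epimorphisms) to obtain the characterisation: a module $M \in \mathcal{F}(\Stsim)$ lies in $A\proj$ if and only if $\Ext^1_A(M, N) = 0$ for every $N \in \mathcal{F}(\Stsim)$. Indeed, the forward direction is immediate, while the converse follows by applying the $\Ext^1$-vanishing to the kernel $K \in \mathcal{F}(\Stsim)$ of a projective cover $A^{n} \twoheadrightarrow M$. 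Since $H_0$ preserves both exactness and $\Ext^1$-groups, it restricts to an additive equivalence $A\proj \xrightarrow{\sim} A'\proj$.

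In particular, $H_0(A)$ is a projective generator of $A'\m$, and the functor $H_0$ induces a ring isomorphism $A \cong \End_A(A)^{op} \cong \End_{A'}(H_0(A))^{op}$. Classical Morita theory then produces an exact equivalence $H\colon A\m \to A'\m$ with $H(A) \cong H_0(A)$; its restriction to $\mathcal{F}(\Stsim)$ is naturally isomorphic to $H_0$, as one sees by comparing cokernels of projective presentations (using $A\proj \subset \mathcal{F}(\Stsim)$). Commutativity of the square is checked first on $A\proj$, where $F' \circ H \cong F' \circ H_0 = L \circ \overline{F} \cong L \circ F$ holds by construction; exactness of $F$, $F'$, $L$, and $H$, combined with the existence of projective presentations, then propagates the natural isomorphism $F' \circ H \cong L \circ F$ to all of $A\m$. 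The main obstacle splits into two parts: first, establishing essential surjectivity of $\overline{F}$ onto $\mathcal{F}(F\Stsim)$, which is precisely where the $\Ext^1$-bijection of a 1-cover is indispensable (and would fail for a mere 0-cover); and second, the characterisation of $A\proj$ inside $\mathcal{F}(\Stsim)$ via $\Ext^1$-vanishing, which converts the abstract equivalence $H_0$ into the ring-level identification $A \cong \End_{A'}(H_0(A))^{op}$ needed to invoke Morita theory. Once both hooks are in place, the remaining verifications reduce to routine diagram chases.
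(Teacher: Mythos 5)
Your argument is correct, and since the paper itself offers no proof of this corollary beyond the citation to \cite[Corollary 4.3.6]{Cr1}, the relevant comparison is with that source: your proof is essentially the standard Rouquier-style argument used there (lift $F$ to an exact equivalence $\mathcal{F}(\Stsim)\simeq\mathcal{F}(F\Stsim)$ via the $\Ext^{0,1}$-bijections, recognise $A\proj$ inside $\mathcal{F}(\Stsim)$ by $\Ext^1$-vanishing against the resolving subcategory, and recover $A\simeq\End_{A'}(H_0(A))^{op}$ by Morita theory). The only cosmetic quibble is that over a general Noetherian ring one should take an arbitrary surjection $A^n\twoheadrightarrow M$ rather than a projective cover, which is all your splitting argument needs.
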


For a more detailed exposition on cover theory and Hemmer-Nakano dimensions we refer to \cite{Cr1, Cr2}.

\section{The main result}\label{The main result}

The aim of this section is to prove Theorem \ref{thm:1}.  

\subsection{Relative injective dimension}  The following concept of relative injective dimension will be useful as a tool in the proof of Theorem \ref{thm:1}.

\begin{Def} 
	Let $\mathcal{A}$ be a full subcategory of $A\m$. We define the \emph{$\mathcal{A}$-injective dimension} of $N\in A\m$ (or the \emph{relative injective dimension of $N$ with respect to $\mathcal{A}$}) as the value \begin{align}
		\inf\{n\in \mathbb{N}\cup \{0\}\colon \Ext_A^{i>n}(M, N)=0, \forall M\in \mathcal{A} \}.
	\end{align}
	We denote by $\injdim_\mathcal{A} N$ the $\mathcal{A}$-injective dimension of $N$.
	Analogously, we define the \emph{$\mathcal{A}$-projective dimension} of $N\in A\m$ as the value
	\begin{align}
		\inf\{n\in \mathbb{N}\cup \{0\}\colon \Ext_A^{i>n}(N, M)=0, \forall M\in \mathcal{A} \}.
	\end{align}
\end{Def}

\begin{Lemma}
	Let $A$ be a projective Noetherian $R$-algebra and let $Q\in A\m$ satisfying $\Ext_A^{i>0}(Q, Q)=0$.
	Then, the following assertions hold.\label{lemmamainresultthree}
	\begin{enumerate}
		\item If there exists an exact sequence $0\rightarrow X\rightarrow Y\rightarrow Z\rightarrow 0$ with $Y\in \add Q$, then $$\injdim_{{}^\perp Q} X\leq 1+\injdim_{{}^\perp Q} Z.$$
		\item If there exists an exact sequence $0\rightarrow X\rightarrow X_r\rightarrow \cdots \rightarrow X_1 \rightarrow Z\rightarrow 0$ with $X_1, \ldots, X_r\in \add Q$, then $\injdim_{{}^\perp Q} X\leq r+\injdim_{{}^\perp Q} Z$.
		\item If there exists an exact sequence $0\rightarrow X\rightarrow X_r\rightarrow \cdots \rightarrow X_1 \rightarrow Z\rightarrow 0$ with $X_1, \ldots, X_r\in \add Q$, then for every $Y\in Q^{\perp}$, $\Ext_A^i(X, Y)\simeq \Ext_A^{i+r}(Z,Y)$ for all $i\in \mathbb{N}$.
	\end{enumerate}
\end{Lemma}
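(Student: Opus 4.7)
My plan is to reduce all three parts to routine applications of the long exact sequence of $\Ext$, after factoring the given complex through its images. Concretely, I would set $K_0 := Z$, $K_r := X$, and $K_j := \ker(X_j \to X_{j-1})$ for $1 \leq j \leq r$, producing $r$ short exact sequences
$$0 \to K_{j+1} \to X_{j+1} \to K_j \to 0, \qquad 0 \leq j \leq r-1,$$
each with middle term $X_{j+1} \in \add Q$. All three statements then become dimension-shifting arguments driven by the hypothesis $\Ext_A^{i>0}(Q,Q) = 0$ together with the defining $\Ext$-vanishing of ${}^{\perp}Q$ (for (1) and (2)) or $Q^{\perp}$ (for (3)).

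For (1), I would fix $M \in {}^{\perp}Q$ and apply $\Hom_A(M,-)$ to $0 \to X \to Y \to Z \to 0$. Since $Y \in \add Q$ and $M \in {}^{\perp}Q$, we have $\Ext_A^i(M,Y) = 0$ for all $i \geq 1$; the long exact sequence then collapses to connecting isomorphisms $\Ext_A^{i-1}(M,Z) \simeq \Ext_A^i(M,X)$ for $i \geq 2$. Setting $n = \injdim_{{}^{\perp}Q} Z$ and reading these off for $i > n+1$ immediately gives $\injdim_{{}^{\perp}Q} X \leq n+1$.

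Part (2) would follow by induction on $r$, with (1) as the base case: split the complex as $0 \to X \to X_r \to K_{r-1} \to 0$ together with the tail $0 \to K_{r-1} \to X_{r-1} \to \cdots \to X_1 \to Z \to 0$, apply the inductive hypothesis to obtain $\injdim_{{}^{\perp}Q} K_{r-1} \leq (r-1) + \injdim_{{}^{\perp}Q} Z$, and combine this with (1) applied to the length-one piece.

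For (3), I would fix $Y \in Q^{\perp}$ and apply $\Hom_A(-,Y)$ to each of the short exact sequences above. Because $X_{j+1} \in \add Q$ and $Y \in Q^{\perp}$, $\Ext_A^i(X_{j+1},Y) = 0$ for every $i \geq 1$, so the long exact sequence yields connecting isomorphisms $\Ext_A^i(K_{j+1},Y) \simeq \Ext_A^{i+1}(K_j,Y)$ for $i \geq 1$. Iterating this identification from $K_r = X$ down to $K_0 = Z$ produces the claimed $\Ext_A^i(X,Y) \simeq \Ext_A^{i+r}(Z,Y)$ for all $i \in \mathbb{N}$. I do not foresee any substantive obstacle; the argument is purely formal dimension shifting. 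The only mild point worth flagging is that the isomorphism in (3) starts at $i = 1$ rather than $i = 0$, since at the zeroth level $\Hom_A(-,Y)$ yields only a four-term exact sequence, not an isomorphism. This is consistent with the paper's convention that $0 \notin \mathbb{N}$, as visible in the notation $\mathbb{N} \cup \{0\}$ appearing in the definition of $\injdim$.
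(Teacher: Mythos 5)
Your proposal is correct and follows essentially the same route as the paper's proof: dimension shifting via $\Hom_A(M,-)$ for (1), induction on $r$ using (1) for (2), and factoring the complex through its images/kernels and iterating the connecting isomorphisms under $\Hom_A(-,Y)$ for (3). Your remark that the isomorphism in (3) holds for $i\geq 1$ matches the paper's convention and its stated range.
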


\begin{proof}
	For each $M\in {}^\perp Q$, applying $\Hom_A(M, -)$ yields that $\Ext_A^i(M, Z)\simeq \Ext_A^{i+1}(M, X)$ for all $i\geq 1$. Hence, (i) follows. By induction and using (i), (ii) follows.
	Denote by $C_i$ the image of $X_{i+1}\rightarrow X_i$ for all $i=1, \ldots, r-1$. By applying $\Hom_A(-, Y)$ we deduce that $\Ext_A^i(X, Y)\simeq \Ext_A^{i+1}(C_{r-1}, Y)\simeq \Ext_A^{i+2}(C_{r-2}, Y)\simeq \Ext_A^{i+r-1}(C_1, Y)\simeq \Ext_A^{i+r}(Z, Y)$.
\end{proof}

\subsection{Computing relative dominant dimension of the regular module using a characteristic tilting module}

In general, the relative codominant dimension of a characteristic tilting module with respect to a partial tilting module gives a lower bound to the relative dominant dimension of the regular module with respect to a partial tilting module (see \citep[Theorem 5.3.1(a)]{Cr2}). In the following, we will see that this lower bound can be sharpened using also the relative dominant dimension of a characteristic tilting module with respect to a partial tilting module. 
\begin{Lemma}\label{mainresultlemma}
Let $(A, \{\Delta(\lambda)_{\lambda\in \Lambda}\})$ be a split quasi-hereditary $R$-algebra with a characteristic tilting module $T$. Denote by $R(A)$ the Ringel dual $\End_A(T)^{op}$ of $A$.
 Suppose that $Q\in \add T$. Then,
        \begin{align}
                Q\domdim (A, R) \geq  Q\domdim_{(A, R)} T + Q\codomdim_{(A, R)} T.
        \end{align}
\end{Lemma}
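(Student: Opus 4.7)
Set $a := Q\domdim_{(A,R)} T$ and $b := Q\codomdim_{(A,R)} T$; the goal is $Q\domdim_{(A, R)} A \geq a+b$.

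The plan is to reduce the statement to Corollary \ref{ddusingcoresolutions}. Specifically, I want to construct an $(A, R)$-exact sequence
\[
(\star)\quad 0 \to A \to Q^1 \to Q^2 \to \cdots \to Q^b \to Y \to 0
\]
with each $Q^i \in \add Q$ and $Y \in \add T$. Granting $(\star)$, the Ext-vanishing hypothesis $\Ext^{i}_A(Y, Q) = 0$ for $1 \leq i \leq b$ required in Corollary \ref{ddusingcoresolutions} is automatic, since $Y, Q \in \add T$ and $\Ext^{>0}_A(T,T) = 0$. The corollary then yields $Q\domdim_{(A,R)} A = b + Q\domdim_{(A,R)} Y$, and combined with $Q\domdim_{(A,R)} Y \geq Q\domdim_{(A,R)} T = a$ (Corollary \ref{cor:2.3.3}, applied to $Y \in \add T$), this gives the desired bound.

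To build $(\star)$, I would combine two pieces of data. The first is a finite $\add T$-coresolution of $A$ in the exact subcategory $\mathcal{F}(\St)$,
\[
0 \to A \to T^0 \to T^1 \to \cdots \to T^{\ell-1} \to T^\ell \to 0,
\]
with each $T^i \in \add T$; its finiteness comes from the fact that $\add T$ provides relative injective envelopes for modules in $\mathcal{F}(\St)$. This coresolution is $\Hom_A(-, Q)$-exact because all of its cokernels stay in $\mathcal{F}(\St)$ and $\Ext^{>0}_A(\mathcal{F}(\St), \mathcal{F}(\Cs)) = 0$, while $Q \in \add T \subseteq \mathcal{F}(\Cs)$. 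The second datum is, for each $T^i$, an $(A,R)$-exact, $\Hom_A(Q, -)$-exact $\add Q$-resolution
\[
Q^{(i)}_{-b} \to Q^{(i)}_{-b+1} \to \cdots \to Q^{(i)}_{-1} \to T^i \to 0
\]
of length $b$, which exists because $Q\codomdim_{(A, R)} T^i \geq b$ by Corollary \ref{cor:2.3.3}. The sequence $(\star)$ is obtained by iterated pullbacks: at each step the leading surjection $Q^{(i)}_{-1} \twoheadrightarrow T^i$ is used to replace the corresponding $\add T$-term $T^i$ in the coresolution of $A$ by the $\add Q$-term $Q^{(i)}_{-1}$. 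After $b$ such refinements, the $b$ intermediate terms of the resulting sequence lie in $\add Q$ and the remaining tail reduces to a single $\add T$-term that plays the role of $Y$.

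The principal obstacle is the bookkeeping required to verify that the pullback construction preserves both $(A,R)$-exactness and $\Hom_A(-,Q)$-exactness at every intermediate stage, and that after exactly $b$ iterations the final term $Y$ genuinely lies in $\add T$ rather than merely in $\mathcal{F}(\St)$. Preservation of both exactness conditions is ensured by the fact that all intermediate kernels remain in $\mathcal{F}(\St) \subseteq {}^\perp Q$, so that the $\Hom_A(-,Q)$-exactness reduces to the $(A,R)$-exactness of each piece. Identifying $Y \in \add T$, which is the most delicate point, requires carefully tracing how the deepest $\add Q$-term of the resolution of $T^{b-1}$ assembles with the remaining $\add T$-tail of the coresolution of $A$. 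The case $b=0$ (where $(\star)$ degenerates) is handled separately by a direct application of Lemma \ref{lem:2.3.2} to the $\add T$-coresolution of $A$.
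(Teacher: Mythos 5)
Your high-level strategy --- produce a left $\add Q$-coresolution of $A$ of length $b=Q\codomdim_{(A,R)}T$ whose cosyzygy still has large relative dominant dimension, then apply Corollary \ref{ddusingcoresolutions} --- is the mirror image of the paper's argument (the paper instead builds a right $\add Q$-resolution of $DA$ of length $Q\domdim_{(A,R)}T$ by transporting the left $\add Q$-coresolution of $T$ through $\Hom_A(T,-)$ to a projective resolution of $DT$ over $R(A)$ and back via $T\otimes_{R(A)}-$). The strategy is viable, but your construction of $(\star)$ does not work: there is a direction mismatch. The data supplied by $Q\codomdim_{(A,R)}T\geq b$ consists of surjections $Q^{(i)}_{-1}\twoheadrightarrow T^i$, and a pullback of such a surjection against a map of the coresolution $0\to A\to T^0\to\cdots$ only ever produces a module mapping \emph{onto} $A$ (or onto $T^{i-1}$), never a module into which $A$ embeds, which is what a term of $(\star)$ must be. The alternative of lifting the differential $T^{i-1}\to T^{i}$ through $Q^{(i)}_{-1}\twoheadrightarrow T^i$ requires $\Ext^1_A(T^{i-1},K)=0$ for $K=\ker\bigl(Q^{(i)}_{-1}\to T^i\bigr)\in\mathcal{F}(\Stsim)$, and this need not vanish (only $\Ext^{>0}_A(\mathcal{F}(\Stsim),\mathcal{F}(\Cssim))=0$ is available). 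The first $b$ steps of a left $\add Q$-coresolution of $A$ really do exist, but they must be produced by the Ringel-duality transport (applied to $A^{op}$ with $DT$, $DQ$), which is exactly the mechanism of the paper's proof.

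A second, independent gap is the assertion $Y\in\add T$, which you flag as delicate but never prove. It is stronger than what any version of this construction yields: the Ringel-duality argument only places the $b$-th cosyzygy in $\mathcal{F}(\Stsim)$, just as the paper's $\overline{C}$ is only shown to lie in $\mathcal{F}(\Cssim)$; requiring $Y\in\add T$ would force $A$ to admit an $\add T$-coresolution of length $b+1$, for which there is no reason. With $Y\in\mathcal{F}(\Stsim)$ the Ext-vanishing needed for Corollary \ref{ddusingcoresolutions} still holds, but the concluding inequality $Q\domdim_{(A,R)}Y\geq Q\domdim_{(A,R)}T$ no longer follows from Corollary \ref{cor:2.3.3}; it needs the additional input that \emph{every} module in $\mathcal{F}(\Stsim)$ has relative dominant dimension at least $Q\domdim_{(A,R)}T$, i.e.\ the dual of the part of \citep[Theorem 5.3.1(a)]{Cr2} that the paper invokes for $\overline{C}$. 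The same missing ingredient undermines your $b=0$ case: Lemma \ref{lem:2.3.2}(a), as recalled in the paper, bounds the relative dominant dimension of the \emph{middle} term of a short exact sequence, so it does not by itself propagate the bound $Q\domdim_{(A,R)}T^i\geq a$ from the terms of the $\add T$-coresolution back to $A$.
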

\begin{proof}
        Observe that $DQ\otimes_A Q\in R\proj$ (see for example \citep[A.4.3.]{Cr1}).
        By \citep[Theorem 5.3.1(a)]{Cr2} and \cite[Corollary 3.1.5]{Cr2}, we obtain that \begin{align}
        Q\domdim_{(A, R)} A=Q\codomdim_{(A, R)} DA\geq Q\codomdim_{(A, R)} T.\end{align}
        If $Q\domdim_A T=0$, then there is nothing more to prove. Assume that $n:=$ \linebreak${Q\domdim_{(A, R)} T}\geq 1$.
        By Proposition \ref{relativedomdimtorelativecodomdim}(i), $\Hom_A(T, Q)\codomdim_{R(A)} DT=n$. Then there exists an exact sequence
        \begin{align}
                0\rightarrow C\rightarrow X_n\rightarrow \cdots \rightarrow X_1\rightarrow DT\rightarrow 0 \label{eqmain2}
        \end{align}with all $X_i\in \add \Hom_A(T, Q)$, and so they are
	projective modules over $R(A)$. The subcategory $\mathcal{F}(\Stsim_{R(A)})$ is closed under kernels of epimorphisms and since $DT$ is a characteristic tilting module over $R(A)$ we obtain that $C\in \mathcal{F}(\Stsim_{R(A)})$. Thus, (\ref{eqmain2}) remains exact under $T\otimes_{R(A)}$ which is left adjoint to  $\Hom_A(T, -)$, and we obtain an exact sequence
\begin{align}
        0\rightarrow \overline{C}\rightarrow \overline{X_n}\rightarrow \cdots\rightarrow \overline{X_1}\rightarrow T\otimes_{R(A)} DT\rightarrow 0 \label{eqmain3}
\end{align}with all $\overline{X_i}\in \add T\otimes_{R(A)} \Hom_A(T, Q)=\add Q$ since $Q\in \add T$. Moreover, $T\otimes_{R(A)} DT\in R\proj$ by \citep[A.4.3.]{Cr1} and so $$T\otimes_{R(A)} DT\simeq T\otimes_{R(A)} \Hom_A(T, DA)\simeq DA$$ since ${DT\domdim_{(A, R)} DA}=+\infty$ (see \citep[Theorem 3.1.1]{Cr2}). By construction, $\overline{C}\in \mathcal{F}(\Cssim)$ and so (\ref{eqmain3}) remains exact under $\Hom_A(Q, -)$. By the dual version of \citep[Corollary 3.1.12]{Cr2}, we obtain that $${Q\codomdim_{(A, R)} DA}=n+Q\codomdim_{(A, R)} \overline{C}.$$
By \citep[Theorem 5.3.1(a)]{Cr2}, ${Q\codomdim_{(A, R)} \overline{C}}\geq Q\codomdim_{(A, R)} T.$
\end{proof}

Surprisingly, the following result generalises \citep[Theorem 4.3]{FK} without using any techniques on symmetric algebras. In particular, for this proof we do not use the fact that the endomorphism algebra of a faithful projective-injective module over a quasi-hereditary algebra with a simple preserving duality is a symmetric algebra.

\begin{Theorem}\label{mainresult}
        Let $(A, \{\Delta(\lambda)_{\lambda\in \Lambda}\})$ be a split quasi-hereditary algebra over a field $k$. Suppose that there exists a simple preserving duality $\spd(-)\colon A\m\rightarrow A\m$. Let $T$ be a characteristic tilting module of $A$. Assume that $Q\in \add T$. Then,
        \begin{align}
                Q\domdim A=Q\codomdim_A DA= 2\cdot Q\codomdim_A T=2\cdot Q\domdim_A T.
        \end{align}
\end{Theorem}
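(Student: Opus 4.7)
The equalities $Q\domdim A = Q\codomdim_A DA$ and $Q\codomdim_A T = Q\domdim_A T$ follow directly from Proposition \ref{relativedomdimtorelativecodomdim}(ii) and Proposition \ref{dominantandcodominantoftiltingcomparison}(iii) respectively, so, setting $n := Q\domdim_A T$, the whole theorem reduces to the equality $Q\domdim_A A = 2n$. The lower bound $Q\domdim_A A \geq 2n$ is obtained by combining Lemma \ref{mainresultlemma} with Proposition \ref{dominantandcodominantoftiltingcomparison}(iii). Everything therefore hinges on the upper bound. The case $n = +\infty$ is automatic from the lemma, so I assume $n < \infty$ and argue by contradiction from $Q\domdim_A A \geq 2n+1$.

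My plan is to transfer the resulting ``too long'' coresolution of $A$ across the Ringel equivalence $\Hom_A(T,-)\colon \mathcal{F}(\Cs_A) \xrightarrow{\sim} \mathcal{F}(\St_{R(A)})$ and contradict Proposition \ref{relativedomdimtorelativecodomdim}(i). Fix an exact sequence
\begin{equation*}
0 \to A \to Q_1 \to \cdots \to Q_{2n+1} \to C \to 0, \qquad Q_i \in \add Q,
\end{equation*}
exact under $\Hom_A(-,Q)$. Applying $\spd$, using Proposition \ref{dominantandcodominantoftiltingcomparison}(ii) (so $\spd Q_i \simeq Q_i$) and the simple preserving property (which forces $\spd P(\l) \simeq I(\l)$, hence makes $\spd A$ contain every indecomposable injective of $A$ as a summand with multiplicity at least one), together with Proposition \ref{dominantandcodominantoftiltingcomparison}(i) (which tells us $\spd$ interchanges $\mathcal{F}(\St)$ and $\mathcal{F}(\Cs)$), produces an exact sequence
\begin{equation*}
0 \to \spd C \to Q_{2n+1} \to \cdots \to Q_1 \to \spd A \to 0
\end{equation*}
with all terms in $\mathcal{F}(\Cs_A)$ and exact under $\Hom_A(Q,-)$.

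Applying the Ringel equivalence $\Hom_A(T,-)$ then yields an exact sequence in $\mathcal{F}(\St_{R(A)})$
\begin{equation*}
0 \to \Hom_A(T, \spd C) \to \tilde Q_{2n+1} \to \cdots \to \tilde Q_1 \to T' \to 0,
\end{equation*}
where $\tilde Q_i \in \add \Hom_A(T, Q)$ and $T' := \Hom_A(T, \spd A)$. The adjunction between $T \otimes_{R(A)} -$ and $\Hom_A(T,-)$, combined with the isomorphism $T \otimes_{R(A)} \Hom_A(T, Q) \simeq Q$ valid on $\mathcal{F}(\Cs_A)$, ensures that this new sequence remains exact under $\Hom_{R(A)}(\Hom_A(T, Q), -)$. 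Because every indecomposable injective appears in $\spd A$, we have $\add T' = \add DT = \add T_{R(A)}$, so the dual of Corollary \ref{cor:2.3.3} gives $\Hom_A(T,Q)\codomdim_{R(A)} T' = \Hom_A(T,Q)\codomdim_{R(A)} DT$. Proposition \ref{relativedomdimtorelativecodomdim}(i) identifies this last quantity with $Q\domdim_A T = n$. But the displayed sequence already witnesses $\Hom_A(T,Q)\codomdim_{R(A)} T' \geq 2n+1$, and since $n$ is finite this is the desired contradiction.

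The main obstacle I expect is ensuring that exactness under $\Hom_A(Q,-)$ on $\mathcal{F}(\Cs_A)$ transports precisely, via the adjunction and the identification $T \otimes_{R(A)} \Hom_A(T, Q) \simeq Q$, into exactness under $\Hom_{R(A)}(\Hom_A(T, Q), -)$ over the Ringel dual. The other delicate point is the structural identification of $\spd A$ as a full injective cogenerator, but this follows cleanly from the simple preserving hypothesis and is exactly what makes $T'$ qualify as a characteristic tilting module of $R(A)$ in the sense needed to invoke the direct-sum formula for relative codominant dimension.
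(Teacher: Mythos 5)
The reductions at the start are fine: the identifications via Propositions \ref{relativedomdimtorelativecodomdim}(ii) and \ref{dominantandcodominantoftiltingcomparison}(iii), and the lower bound $Q\domdim_A A\geq 2n$ via Lemma \ref{mainresultlemma}, all match the paper. The upper bound, however, has a fatal gap.

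Your transport of $0 \to A \to Q_1 \to \cdots \to Q_{2n+1} \to C \to 0$ across $\spd$ and then across $\Hom_A(T,-)$ silently assumes that every cosyzygy $C_i = Q_i/C_{i-1}$ lies in $\mathcal{F}(\St)$ (equivalently, that every syzygy $\spd C_i$ of the dualized sequence lies in $\mathcal{F}(\Cs)$); only then is $\Hom_A(T,-)$ exact on the whole complex, and only then does the dual of Corollary \ref{cor:2.3.3} apply to an actual exact sequence over $R(A)$. This membership is not automatic: from $0 \to C_{i-1}\to Q_i \to C_i\to 0$ one gets $\Ext^1_A(C_i,T)=0$ only if $C_{i-1}\to Q_i$ is a left $\add T$-approximation, whereas the hypothesis only provides a left $\add Q$-approximation, and $Q$ may be a proper summand of $T$. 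Worse, if the membership did hold at every stage, your argument would apply verbatim to a coresolution of any length $m$, giving $Q\domdim_A A \leq \Hom_A(T,Q)\codomdim_{R(A)} DT = n$ and thus contradicting the lower bound $2n$ you have already established (for $n\geq 1$). So the transport must break down, and it does so precisely after $n$ steps.

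That breakdown is in fact the substance of the theorem and of the paper's proof: the paper builds the coresolution of $DA$ from the Ringel-dual side (so the first $n$ kernels are controlled and lie in $\mathcal{F}(\Cs)$), shows that the $(n+1)$-st kernel $D$ is \emph{not} in $\mathcal{F}(\Cs)$ but satisfies $\dim_{\mathcal{F}(\Cs)}D=1$, and then combines the Mazorchuk--Ovsienko non-vanishing $\Ext_A^2(\spd D, D)\neq 0$ (Proposition \ref{filtrationdimensionext}) with the bound $\injdim_{{}^\perp Q}D\leq n+1$ from Lemma \ref{lemmamainresultthree} to reach a contradiction. Your proposal contains no substitute for this filtration-dimension argument, which is the real content of the upper bound; the adjunction and exactness bookkeeping you flag as the ``main obstacle'' is the comparatively easy part and is not where the difficulty lies.
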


\begin{proof}
        By \citep[Corollary 3.1.5]{Cr2}, \citep[Proposition 3.1.6]{Cr2} and Lemma \ref{mainresultlemma}, it remains to show that $Q\codomdim_A DA\leq 2 \cdot Q\codomdim_A T$. If $Q\codomdim_A T=+\infty$ then there is nothing to prove. Denote by $n$ the value $Q\domdim_A T=Q\codomdim_A T$.   Assume first that $n>0$. 
So we can consider again exact sequences of the form (\ref{eqmain2}) and (\ref{eqmain3}). Assume, for a contradiction, that $Q\codomdim_A DA>2n$. Hence, also $Q\codomdim_A \overline{C}>n$ according \citep[dual of Corollary 3.1.12]{Cr2}. So there exists an exact sequence
        \begin{align}
                0\rightarrow L\rightarrow \overline{X_{2n+1}}\rightarrow \overline{X_{2n}}\rightarrow \cdots\rightarrow \overline{X_{n+1}}\rightarrow \overline{C}\rightarrow 0
        \end{align}
which remains exact under $\Hom_A(Q, -)$ and $\overline{X_i}\in \add Q$, $i=n+1, \ldots, 2n+1$. In particular, $0\rightarrow L\rightarrow \overline{X_{2n+1}}\rightarrow \cdots\rightarrow \overline{X_1}\rightarrow DA\rightarrow 0$
is an $\Hom_A(Q, -)$-acyclic coresolution of $L$, so it can be used to compute $\Ext_A^i(Q, L)$ for all $i$.
Since it remains exact under $\Hom_A(Q, -)$ we obtain that $\Ext_A^{i>0}(Q, L)=0$ and so $L\in Q^\perp$ and $\spd L\in {}^\perp Q$.
Let $D$ be the kernel of the map $\overline{X_{n+1}}\rightarrow \overline{C}$ and consider the exact sequence \begin{align}
        0\rightarrow D\rightarrow \overline{X_{n+1}}\rightarrow \overline{X_n}\rightarrow \cdots\rightarrow  \overline{X_1}\rightarrow DA\rightarrow 0. \label{eqmain9}
\end{align} By Lemma \ref{lemmamainresultthree}(2),  $\injdim_{{}^\perp Q} D\leq n+1$ and since (\ref{eqmain9}) remains exact under $\Hom_A(Q, -)$ we have that $D\in Q^{\perp}$. On the other hand, observe that $D$ cannot belong to $\mathcal{F}(\Cs)$ because otherwise (\ref{eqmain9}) would remain exact under $\Hom_A(T, -)$ yielding that $n<\Hom_A(T, Q)\codomdim_{R(A)} DT$ contradicting the definition of $n$.

So the exact sequence $0\rightarrow D\rightarrow \overline{X_{n+1}}\rightarrow \overline{C}\rightarrow 0$ yields that $\dim_{\mathcal{F}(\Cs)}D=1$. Hence, $\dim_{\mathcal{F}(\St)}\spd D=1$. By Corollary 6 of \cite{zbMATH02105773} we obtain that $0\neq \Ext_A^2(\spd D, \spd \spd D)\simeq \Ext_A^2(\spd D, D)$. By Lemma \ref{lemmamainresultthree}(3) on the exact sequence $0\rightarrow \spd D\rightarrow \spd \overline{X_{n+2}}\rightarrow\cdots\rightarrow \spd \overline{X_{2n+1}}\rightarrow \spd L\rightarrow 0$ we obtain $0\neq \Ext_A^2(\spd D, D)\simeq \Ext_A^{n+2}(\spd L, D)$. This contradicts $\injdim_{{}^\perp Q} D$ being at most $n+1$.
       We will now treat the case $n=0$.  Assume, for sake of contradiction, that $Q\codomdim_A DA\geq 1$, then there exists an exact sequence $0\rightarrow L\rightarrow X_1\rightarrow DA\rightarrow 0$ which remains exact under $\Hom_A(Q, -)$ and $X_1\in \add Q$. Hence, $L\in Q^{\perp}$, $\spd L\in {}^\perp Q$, $\dim_{\mathcal{F}(\Cs)}(L)\leq 1$ and the ${}^\perp Q$-injective dimension of $L$ is at most one. In particular, $\Ext_A^2(\spd L, L)= 0$. By Proposition \ref{filtrationdimensionext}, we must have that $L\in \mathcal{F}(\Cs)$. But, then applying $\Hom_A(T, -)$ to   $0\rightarrow L\rightarrow X_1\rightarrow DA\rightarrow 0$ yields that $\Hom_A(T, Q)\codomdim_{R(A)} \Hom_A(T, DA)\geq 1$ which, in turn, implies that $n=Q\domdim T\geq 1$ by Proposition \ref{relativedomdimtorelativecodomdim}(i).
\end{proof}

The following will in particular  give a positive answer to the Conjecture 6.2.4 of \cite{Cr}.

\begin{Prop}\label{Prop323}
        Let $(A, \{\Delta(\lambda)_{\lambda\in \Lambda}\})$ be a split quasi-hereditary algebra over a field $k$ with a simple preserving duality. Let $T$ be a characteristic tilting module of $A$. Assume that $Q\in \add T$ satisfying $Q\domdim_A A\geq 2$. Then,
\begin{align}
        Q\domdim_A A=2\cdot Q\domdim_A T\geq 2\dd_A T=\dd A.
\end{align}
\end{Prop}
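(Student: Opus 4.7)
The argument would proceed in three stages. First, the leftmost equality $Q\domdim_A A = 2 \cdot Q\domdim_A T$ is immediate from Theorem \ref{mainresult} applied to $Q \in \add T$. For the rightmost equality, my plan is to show every indecomposable projective-injective $P(\lambda)$ lies in $\add T$: such $P(\lambda)$ is in $\mathcal{F}(\St)$ (being projective), and since $\spd$ exchanges projectives with injectives while fixing each $T(\mu)$ up to isomorphism by Proposition \ref{dominantandcodominantoftiltingcomparison}(ii), $P(\lambda)$ is also $\nabla$-filtered; hence $P(\lambda) \in \mathcal{F}(\St) \cap \mathcal{F}(\nabla) = \add T$. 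Thus a minimal faithful projective-injective $P$ lies in $\add T$, and applying Theorem \ref{mainresult} with $P$ in place of $Q$ gives $\dd A = P\domdim_A A = 2 \cdot P\domdim_A T = 2 \cdot \dd_A T$ via the classical identification of dominant dimension with the relative dominant dimension with respect to a minimal faithful projective-injective.

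It remains to establish the middle inequality $Q\domdim_A T \geq \dd_A T$. The hypothesis $Q\domdim_A A \geq 2$ yields an $\Hom_A(-, Q)$-exact sequence $0 \to A \to Q_1 \to Q_2$ with $Q_i \in \add Q$, and since every indecomposable projective-injective $P(\lambda)$ is an injective summand of $A$, its embedding into $Q_1$ splits, so $\add P \subseteq \add Q$. Setting $n = \dd_A T$, the initial segment $0 \to T \to I_0 \to \cdots \to I_{n-1} \to K_n \to 0$ of the minimal injective resolution of $T$ has each $I_j \in \add P \subseteq \add Q$ projective-injective, and successive applications of the closure of $\mathcal{F}(\nabla)$ under cokernels of monomorphisms place every cosyzygy $K_j$ in $\mathcal{F}(\nabla)$. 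The plan is to invoke Corollary \ref{ddusingcoresolutions} to conclude $Q\domdim_A T \geq n + Q\domdim_A K_n \geq n$, which reduces to verifying $\Ext_A^i(K_n, Q) = 0$ for $1 \leq i \leq n$.

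For this Ext-vanishing, my plan is to exploit the simple preserving duality. Since $\spd Q \simeq Q$ (each indecomposable summand being $\spd$-fixed), applying $\spd$ to the coresolution above yields a projective resolution $0 \to \spd K_n \to \spd I_{n-1} \to \cdots \to \spd I_0 \to T \to 0$ whose first $n$ terms are projective-injective, and the duality gives $\Ext_A^i(K_n, Q) \simeq \Ext_A^i(Q, \Omega^n T)$, where $\Omega^n T := \spd K_n$ is the $n$-th syzygy of $T$ in this resolution. Dimension shifting along the short exact sequences $0 \to \Omega^{j+1} T \to \spd I_j \to \Omega^j T \to 0$ for $j = 0, \ldots, n-1$, combined with the fundamental vanishing $\Ext_A^{i>0}(Q, T) = 0$ (from $Q \in \mathcal{F}(\St)$ and $T \in \mathcal{F}(\nabla)$), should deliver the required vanishings. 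The main obstacle is the low-degree case $1 \leq i \leq n$: using only the injectivity of the $\spd I_j$ one readily obtains $\Ext_A^{i}(Q, \Omega^n T) = 0$ for $i > n$, but the low-degree part will demand a careful additional dimension shift exploiting the projectivity of the $\spd I_j$ as well, or alternatively a direct cover-theoretic argument leveraging the double centralizer consequence of $Q\domdim_A A \geq 2$.
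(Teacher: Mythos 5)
Your first two stages are sound and agree with the paper: the outer equalities come from Theorem \ref{mainresult} applied to $Q$ and to a faithful projective--injective module $P$ (which indeed lies in $\add T$), and your derivation of $\add P\subseteq\add Q$ from $Q\domdim_A A\geq 2$ is exactly the paper's first step. The gap is in your third stage. You reduce $Q\domdim_A T\geq n:=\dd_A T$ to the claim that $\Ext_A^i(K_n,Q)=0$ for $1\leq i\leq n$, where $K_n$ is the $n$-th cosyzygy of the \emph{minimal injective} coresolution of $T$, and you flag the low-degree range as an obstacle to be overcome by a further dimension shift. It cannot be overcome: the claim is false. Shifting along the projective--injective terms only gives $\Ext_A^i(K_n,Q)\simeq\Ext_A^{i-n}(T,Q)$ for $i>n$, and in degrees $\leq n$ nothing forces vanishing, because $K_n$ lies in $\mathcal{F}(\Cs)$ but need not lie in ${}^\perp Q$ once $Q$ has non-projective summands. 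Concretely, take $A=S(2,2)$ in characteristic $2$ and $Q=T=T(0)\oplus T(2)$, so that $Q\domdim_A A=+\infty\geq 2$ and $n=\dd_A T=1$: the injective hull of $T(0)=L(0)$ is $T(2)=P(0)$, the cosyzygy is $K_1=\Cs(2)$, and $0\to L(0)\to P(0)\to\Cs(2)\to 0$ is non-split, so $\Ext^1_A(K_1,T(0))\neq 0$. The structural reason is that the maps in the minimal injective coresolution are left $\add P$-approximations (exactness under $\Hom_A(-,P)$ is free since $P$ is injective) but are in general \emph{not} left $\add Q$-approximations for the larger $Q$; so this coresolution is simply not the one witnessing $Q\domdim_A T\geq n$, and neither your dimension-shifting variants nor the unspecified ``cover-theoretic argument'' will extract the needed vanishing from it.

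The paper sidesteps this by moving the comparison to the Ringel dual, where the approximation conditions trivialise. By Proposition \ref{relativedomdimtorelativecodomdim}(i), $Q\domdim_A T=\Hom_A(T,Q)\codomdim_{R(A)}DT$ and $P\domdim_A T=\Hom_A(T,P)\codomdim_{R(A)}DT$. Since $P\in\add Q$ gives $\Hom_A(T,P)\in\add\Hom_A(T,Q)$ and both are \emph{projective} $R(A)$-modules, any exact sequence $X_m\to\cdots\to X_1\to DT\to 0$ with $X_i\in\add\Hom_A(T,P)$ is automatically one by $\add\Hom_A(T,Q)$-modules that stays exact under $\Hom_{R(A)}(\Hom_A(T,Q),-)$; hence the codominant dimension can only increase when $\Hom_A(T,P)$ is enlarged to $\Hom_A(T,Q)$, giving $Q\domdim_A T\geq P\domdim_A T=\dd_A T$ at once. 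If you wish to argue on the $A$-side instead, you must build the coresolution of $T$ from genuine left $\add Q$-approximations rather than from injective hulls.
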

\begin{proof}Let $P$ be a faithful projective-injective module over $A$.
        By assumption, $Q\domdim_A A\geq 2$, so by \citep[Corollary 3.1.8]{Cr2} it follows that $Q\domdim_A P\geq 2$. Since $P$ is injective we must have that $P\in \add Q$. Hence, $\Hom_A(T, P)\in \add \Hom_A(T, Q)$.

  By Proposition \ref{relativedomdimtorelativecodomdim}(i), \begin{align}
                Q\domdim_A T&=\Hom_{R(A)}(T, Q)\codomdim_{R(A)} DT
                \geq \Hom_A(T, P)\codomdim_{R(A)} DT \nonumber\\
                &=P\domdim_A T=\dd_A T.
        \end{align}
Applying Theorem \ref{mainresult} to the partial tilting modules $P$ and $Q$, the result follows.
\end{proof}

\section{Input from  Schur algebras}\label{Input from  Schur algebras}

The main work to prove the second main result, to determine the Hemmer-Nakano dimension of $\mathcal{F}(\St)$ over 
the quasi-hereditary cover for the Temperley-Lieb algebra, in Sections \ref{sec6} and \ref{sec7},  
is done for Schur algebras, and we can work over an algebraically closed field.
In this section, we give an outline of the background.
To keep the notation simple, we do this for the classical case.

Assume $K$ is an algebraically closed field. 
The Schur algebra $S=S_K(n,d)$ (or just $S(n, d)$) of degree $d$ over $K$
can be defined in different ways. 
One can start with the symmetric group $\cS_d$ which acts (on the right)
by place
permutations on the tensor power $V^{\otimes d}$ where $V$ is an $n$-dimensional
vector space. Then the \emph{Schur algebra}
$S(n, d) $ is the endomorphism algebra ${\rm End}_{K\cS_d}(V^{\otimes d})$. 
Analogously, the 
\emph{integral Schur algebra} $S_R(2, d)$ is defined as the endomorphism algebra
${\rm End}_{R\cS_d}((R^2)^{\otimes d})$ where $(R^2)^{\otimes d}$ affords a right
$R\cS_d$-module structure via place permutations.
Alternatively one can construct $S(n, d)$ via the general linear group $GL(V)$, for details see for example \cite{Gr} or \cite{Do}. 
The first route shows that the endomorphism algebra of $S(n,d)$ acting on 
$V^{\otimes d}$ is a quotient of $K\cS_d$. The second approach allows one use
tensor products and Frobenius twists as tools to study representations.

The Schur algebra $S(n, d)$ is  quasi-hereditary, with respect
 to the dominance order on the set $\Lambda^+(n, d)$ of partitions of $d$ with
at most $n$ parts, which is the standard labelling set for simple modules.
It has a simple preserving duality $\spd(-)$ (see for example \citep[p.83]{Do2}).  For each partition $\l$ of $d$ with at most $n$ parts, the corresponding simple module will be denoted by $L(\lambda)$. 
We denote the standard module with simple top $L(\lambda)$ by $\Delta(\lambda)$, then the costandard module with simple socle $L(\lambda)$ is $\nabla(\lambda) = \spd\Delta(\lambda)$. 
For background we refer to 
\cite{E1} or \cite{DR}, \cite{DR1}.

Of central importance for the quasi-hereditary structure  is the characteristic tilting module $T$: By \cite{Ri} the indecomposable modules
in $\cF(\Delta)\cap \cF(\nabla)$ are in bijection with the weights. Write 
 $T(\lambda)$ for  the indecomposable labelled by 
$\lambda \in \Lambda^+(n, d)$. Then the 
direct sum $T:= \bigoplus_{\lambda\in \L^+(n, d)} T(\lambda)$ (or a module
with the same indecomposable summands) 
is a distinguished tilting module, known as the \emph{characteristic
tilting module} of $S$. 
Its endomorphism algebra $R(S):= {\rm End}_S(T)^{op}$ is again quasi-hereditary
and $R(R(S))$ is Morita equivalent (as quasi-hereditary algebra) to $S$. 

For each $\lambda$, there is an associated exact sequence
\begin{align}
	0\to \Delta(\lambda) \to T(\lambda) \to X(\lambda)\to 0 \label{(4.1)}
\end{align} 
 
 where $X(\lambda)$ has $\Delta$-filtration where only $\Delta(\mu)$ with
 $\mu< \lambda$ occur. We will refer to this as a standard sequence.

 We follow the usual practice in algebraic Lie theory to refer to 
 a module in ${\rm add}(T)$ as a tilting module, and to $T$ as a full
 tilting module (this will not be ambiguous here).

For the connection between Schur algebras and symmetric groups, the tensor
space $V^{\otimes d}$ is of central importance. 
As it happens, the tensor space is a direct sum of tilting modules, and 
$T(\lambda)$ occurs as a summand if and only if $\lambda$ is $p$-regular
(that is does not have $p$ equal parts). For the quantum case, $T(\lambda)$ occurs in the tensor space if and only if
$\lambda$ is $\ell$-regular where $q$ is a primitive $\ell$-th root of 1.  
This is proved in \citep[4.2]{E1}, or combining  the reasoning of \citep[4.2]{E1} with \citep[2.2(1), 4.3, 4.7]{Do2} respectively. Hence, the following result has become folklore.

\begin{Lemma}\label{lem:4.0.1}
	Assume that $n=2$ and $d$ is a natural number. 
 If $\characteristic K\neq 2$ or $d$ is odd, then $V^{\otimes d}$ is a characteristic tilting module over $S_K(2, d)$.
\end{Lemma}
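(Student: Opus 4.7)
The plan is to reduce the lemma directly to the folklore fact quoted immediately above its statement: $V^{\otimes d}$ decomposes as a direct sum of indecomposable tilting modules $T(\lambda)$ of $S_K(n,d)$, and $T(\lambda)$ occurs as a summand of $V^{\otimes d}$ if and only if $\lambda$ is $p$-regular, where $p=\characteristic K$ (with the convention that when $p=0$ every partition is $p$-regular). Recall that for a characteristic tilting module in the sense of the paper one needs $\add V^{\otimes d}=\mathcal{F}(\Stsim)\cap\mathcal{F}(\Cssim)=\add\bigoplus_{\lambda\in\L^+(2,d)}T(\lambda)$; the folklore immediately gives the inclusion $\add V^{\otimes d}\subseteq\add\bigoplus_{\lambda}T(\lambda)$, so the content of the lemma is to check the reverse inclusion, i.e.\ that under the hypotheses every $\lambda\in\L^+(2,d)$ is $p$-regular.

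The key observation is that in the case $n=2$, the set $\L^+(2,d)$ consists of pairs $\lambda=(\lambda_1,\lambda_2)$ with $\lambda_1\geq\lambda_2\geq 0$ and $\lambda_1+\lambda_2=d$, so any $\lambda\in\L^+(2,d)$ has at most two parts. Consequently $\lambda$ can have $p$ equal parts only when $p\leq 2$; since $p=0$ is vacuously harmless, the only obstruction to $p$-regularity is $p=2$ together with $\lambda_1=\lambda_2$, which in turn forces $d=2\lambda_1$ to be even.

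I would then split into the two cases of the hypothesis. If $\characteristic K\neq 2$, then $p\neq 2$, so the obstruction does not occur and every $\lambda\in\L^+(2,d)$ is $p$-regular. If $d$ is odd, the equality $\lambda_1=\lambda_2$ is impossible, so again every $\lambda\in\L^+(2,d)$ is $p$-regular. In either case, every indecomposable tilting module $T(\lambda)$ with $\lambda\in\L^+(2,d)$ appears as a summand of $V^{\otimes d}$.

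Combining the two directions gives $\add V^{\otimes d}=\add\bigoplus_{\lambda\in\L^+(2,d)}T(\lambda)=\mathcal{F}(\Stsim)\cap\mathcal{F}(\Cssim)$, which is precisely the definition of a characteristic tilting module of $S_K(2,d)$. There is no genuine obstacle here: once the folklore is invoked, the proof is a small bookkeeping exercise on partitions with at most two parts, and the analogous $q$-version follows in the same way upon replacing $p=\characteristic K$ by the quantum characteristic $\ell$ and noting that $\ell=2$ with $\lambda_1=\lambda_2$ again forces $d$ even.
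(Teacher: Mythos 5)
Your argument is correct and follows essentially the same route as the paper: invoke the folklore decomposition of $V^{\otimes d}$ into the $T(\lambda)$ with $\lambda$ $p$-regular, and observe that a partition with at most two parts can fail $p$-regularity only when $p=2$ and the two parts are equal, which forces $d$ even. The only cosmetic difference is in characteristic zero, where the paper argues separately via semisimplicity of $S(2,d)$ and faithfulness of $V^{\otimes d}$ rather than extending the $p$-regularity convention to $p=0$; both are fine.
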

\begin{proof}
	If $K$ has characteristic zero, then the Schur algebra $S(2, d)$ is semi-simple (see for example \citep[(2.6)e]{Gr}) and since $V^{\otimes d}$ is faithful over $S(2, d)$ it contains the regular module in its additive closure, and in particular, $V^{\otimes d}$ is a characteristic tilting module. If $K$ has positive characteristic, as discussed before $V^{\otimes d}$ is a characteristic tilting module over $S(2, d)$ if and only if all partitions of $d$ in at most $2$ parts are $\characteristic K$-regular partitions of $d$.  Of course, all partitions of $d$ in at most $2$ parts are $p$-regular if $p>2$. If $d$ is odd, then there are no partitions of $d$ in exactly two equal parts. 
\end{proof}

From now on we assume $n=2$ and $\characteristic K =2$, or in the quantum case that $\ell = 2$. We also  assume $d$ is even (unless specified differently).

\subsection{On the quasi-hereditary structure of $S(2, d)$}

Let $S=S(2, d)$, and let $e=\xi_{(d)}$ be the idempotent corresponding to the largest weight (in the notation of \cite{Gr}). Then $SeS$ is
an idempotent heredity ideal and $S/SeS$ is isomorphic to $S(2, d-2)$ (for details see for example \cite{E2}).  Since $SeS$ is a heredity ideal 
corresponding to $(d)$,  factoring it out is compatible with the quasi-hereditary structure. Furthermore, as it is proved in the appendix of \cite{DR}
computing ${\rm Ext}^i$'s for $S/SeS$-modules is the same whether in $S$ or in $S/SeS$. In particular, $S(2, d)\m$ is the full subcategory of $S(2, d+2)\m$ consisting of modules whose composition factors are different from those appearing in the top of $Se$. 

We work mostly with the restrictions of simple modules, (co)standard modules and tilting modules to $SL(2, K)$.
 Recall that
$L(\lambda)$ and $L(\mu)$ are isomorphic as $SL(2, K)$-modules if and only if  they can be regarded both as $S(2, d)$-modules for some large $d$ and they are isomorphic as $S(2, d)$-modules. This fact can be seen using the canonical surjective map of $KSL(2, K)$ onto $S(2, d)$. Since every partition $(\l_1, \l_2)$ of $d$ in at most $2$ parts  is completely determined by the value $\l_1-\l_2$, it follows that $L(\lambda)$ and $L(\mu)$ are isomorphic as $SL(2, K)$-modules if and only if $\lambda_1-\lambda_2 = \mu_1 - \mu_2$. Similarly for standard modules and tilting modules. 

We therefore label these modules by $m =\lambda_1-\lambda_2$ if $\lambda = (\lambda_1, \lambda_2)$ (such labellings can also be found for example in \citep[Subsection 3.2]{EL}).
This means that we consider Schur algebras $S=S(2,d)$, allowing degrees to vary
but keeping the parity.
We make the convention that
we view tacitly modules for $S(2, d')$ with $d'\leq d$ of the same parity as modules for $S(2, d)$.
We say that such a degree $d'$ 
is admissible for the module defined in degree $d$.
With this, the weights labelling the simple modules for $S(2, d)$ are precisely all non-negative integers $m\leq d$ of the same parity.
The dominance order when $p=2$ and the degree is even, is the linear order.

The tilting module $T(0)$ is simple, it is the trivial module for $SL(2, K)$. 
As a building block, the tilting module $T(1)$ appears,  which is isomorphic
to the natural $SL(2, K)$-module $V$. Furthermore,
$T(2) \cong V^{\otimes 2}$. For $d\geq 4$ we have that $V^{\otimes d}$ is the direct sum of $T(k)$ where all $T(k)$ occur for
$k$ of the same parity of $d$, except that $T(0)$ does not occur when $d$ is even.
(See for example \cite{E1}).

\subsection{The category $\cF(\Delta)$ and projective modules}

Non-split extensions of  standard modules satisfy a directedness property, that is
$$\Ext_S^1(\Delta(r), \Delta(s))\neq 0 \ \ \mbox{ implies} \ \ r< s.
$$
This has the following immediate consequence:

\begin{Lemma}\label{lem:4.2.1} Every module in
$\cF(\Delta)$ has a filtration in which weights of $\Delta$-quotients increase from top to bottom. 
\end{Lemma}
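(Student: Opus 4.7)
The plan is a standard bubble-sort argument whose single ingredient is the directedness statement displayed just above the lemma. Given $M \in \cF(\Delta)$, start with an arbitrary $\Delta$-filtration
\[
0 = M_0 \subset M_1 \subset \cdots \subset M_n = M, \qquad M_i/M_{i-1} \cong \Delta(r_i),
\]
and measure its failure to satisfy the conclusion by the number of inversions, i.e.\ pairs $i < j$ with $r_i < r_j$. If this count is zero the sequence $(r_1,\dots,r_n)$ is non-increasing, which by our convention (bottom-to-top is $M_1 \subset \cdots \subset M/M_{n-1}$) is exactly the required non-decreasing behaviour from top to bottom. Otherwise, some adjacent pair satisfies $r_i < r_{i+1}$, and the goal is to swap it.

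For such a pair, the two-step subquotient fits into
\[
0 \to \Delta(r_i) \to M_{i+1}/M_{i-1} \to \Delta(r_{i+1}) \to 0,
\]
representing a class in $\Ext_S^1(\Delta(r_{i+1}), \Delta(r_i))$. Since $r_{i+1} > r_i$, the directedness property forces this Ext group to vanish, so the sequence splits. Pulling back along the quotient $\pi\colon M_{i+1} \twoheadrightarrow M_{i+1}/M_{i-1}$ a complement summand $N \cong \Delta(r_{i+1})$ gives a submodule $M_i' := \pi^{-1}(N)$ sitting between $M_{i-1}$ and $M_{i+1}$, with $M_i'/M_{i-1} \cong \Delta(r_{i+1})$ and $M_{i+1}/M_i' \cong \Delta(r_i)$. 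Replacing $M_i$ by $M_i'$ in the chain produces a new $\Delta$-filtration whose sequence of weights differs from the old one precisely by transposing the entries at positions $i$ and $i+1$, and as in the classical analysis of bubble sort this transposition of an adjacent inversion decreases the inversion count by exactly one.

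Iterating the local move terminates after finitely many steps at a $\Delta$-filtration with no inversions, which is the required one. There is no real obstacle: the vanishing of $\Ext^1$ is purely formal from the recorded directedness, and the pullback of the splitting along $\pi$ yields an honest submodule of $M_{i+1}$ containing $M_{i-1}$, so the local swap really upgrades to a modification of the full filtration. (Note also that the linearity of the dominance order in $S(2,d)$ for a fixed parity means the inequality $r_i < r_{i+1}$ is automatic as soon as the two weights differ, so no extra case analysis is needed.)
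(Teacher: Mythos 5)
Your argument is correct and is precisely the standard filtration-reordering ("bubble sort via vanishing of $\Ext^1$") proof that the paper's cited references (e.g.\ [DR, Lemma 1.4]) use; the paper itself only gives the citation. The orientation of the $\Ext^1$ group, the splitting, and the pullback of the complement all check out, so nothing further is needed.
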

\begin{proof} This follows for example from \citep[Lemma 1.4]{DR}, 
	see also \citep[B.1.6]{Cr1}. See \cite{zbMATH03704845}, for an earlier reference.
\end{proof}

Of main interest for us are the indecomposable projective modules.
Let $P_d(m)$ denote the indecomposable projective of $S(2, d)$ with simple quotient $L(m)$. 
Recall $P_d(m)$ has a $\Delta$-filtration, and that the filtration multiplicities $[P_d(m):\Delta(w)]$  are the same as 
the decomposition numbers.  
That is, 
$$[P_d(m): \Delta(w)] = (\nabla(w):L(m)) = (\Delta(w):L(m)).
$$
where we write $(M:L(m))$ for the multiplicity of $L(m)$ as a composition factor of the module $M$.
Note this also shows that projective modules depend on the degree $d$. 
In this case, decomposition numbers are always $0$ or $1$, see 
\citep[Prop. 2.2, Theorem 3.2.]{H}.
We give an example in Figure \ref{fig:decomp}.

\begin{figure}[h]
        \begin{tiny}
                \setlength\arraycolsep{2pt}
                \[
                \begin{array}{r|*{4}c|*{4}c|*{8}c|*{8}cc}
                \hline
                0& 1 &&&& &&&&& &&&&& &&&&& &&&&&\\
                2& 1&1 &&& &&&&& &&&&& &&&&& &&&&&\\
                4&  1&1&1 && &&&&& &&&&& &&&&& &&&&&\\
                6&  1&.&1&1 & &&&&& &&&&& &&&&& &&&&&\\
                \hline
                8& 1&.&1&1&1 &&&&& &&&&& &&&&& &&&&&\\
                10& 1&1&1&.&1& 1&&&& &&&&& &&&&& &&&&&\\
                12& 1&1&.&.&1& 1&1&&& &&&&& &&&&& &&&&&\\
                14& 1&.&.&.&1& .&1&1&& &&&&& &&&&& &&&&&\\
                \hline
                16& 1&.&.&. &1 &. &1&1&1& &&&&& &&&&& &&&&&\\
                18& 1&1&.&.& 1& 1&1&.&1&1 &&&&& &&&&& &&&&&\\
                20& 1& 1& 1&.& 1&1 .&.&1 &1&1& 1&&&& &&&&& &&&&&\\
                22& 1&.&1& 1& 1& .&.&.&1&.& 1&1&&& &&&&& &&&&&\\
                24& 1&.&1&1&.& .&.&.&1&.& 1&1&1&& &&&&& &&&&&\\
                26& 1&1&1&.&.& .&.&.&1&1& 1&.&1&1& &&&&& &&&&&\\
                28& 1&1&.&.&.& .&.&.&1&1& .&.&1&1&1 &&&&& &&&&&\\
                 30& 1&.&.&.&.& .&.&.&1&.& .&.&1&.&1& 1&&&& &&&&&\\
                \hline 
                32& 1&.&.&.&.& .&.&.&1&.& .&.&1&.&1& 1&1&&& &&&&&\\
                34& 1&1&.&.&.& .&.&.&1&1& .&.&1&1&1& .&1&1&& &&&&&\\
                36& 1&1& 1&.&.& .&.&.&1&1& 1&.&1&1&.& .&1&1&1& &&&&&\\
                38& 1&.&1&1&.& .&.&.&1&.& 1&1&1&.&.& .&1&.&1&1 &&&&&\\
                 40& 1&.&1&1&1& .&.&.&1&.& 1&1&.&.&.& .&1&.&1&1& 1&&&&\\
                42& 1&1&1&.&1& 1&.&.&1&1& 1&.&.&.&.& .&1&1&1&.& 1&1&&&\\
                44& 1&1&.&.&1& 1&1&.&1& 1& .&.&.&.&.& .&1&1&.&.& 1&1&1&&\\
                46&1&.&.&.&1&.&1&1&1&.&.&.&.&.&.&.&1&.&.&.&1&.&1&1&\\
                                \end{array} \]
                \begin{center}
                        $\vdots$
                \end{center}
        \end{tiny}
	\caption{Decomposition matrix for $S(2, 46)$ for $p=2$. The $(m,n)$-entry denotes $(\Delta(m):L(n))$, the column label is the same as the row label.} 
  \label{fig:decomp}
\end{figure}

It follows that either $P_{d}(m) \cong P_{d-2}(m)$ as a module for $S(2, d)$, or else there is a non-split exact sequence
\begin{align}
	0\to \Delta(d) \to P_{d}(m) \to P_{d-2}(m)\to 0. \label{4.2}
\end{align}
Namely, the top of $P_{d-2}(m)$ is $L(m)$, so there
is a surjective homomorphism from $P_{d}(m)$ onto $P_{d-2}(m)$. Recall that
$\cF(\Delta)$ is closed under kernels of epimorphisms. 
By the filtration property in Lemma \ref{lem:4.2.1}
if this is not an isomorphism, then its kernel is a direct sum of copies
of $\Delta(d)$ and there is only one since the decomposition numbers
are $\leq 1$.

\subsection{Twisted tensor product methods}\label{Twisted tensor product methods}

Let $(-)^F$ denote the Frobenius twist (see \citep[page 64]{Do2}), 
this is an exact functor.
In our setting, that is for even characteristic,  we have the following tools, due to \cite{Do}.
Odd degrees when $p=2$ are less important. Namely, each block of $S(2, d)$ for $d$ odd  is Morita equivalent
to some block of some Schur algebra $S(2, x)$ with $x=(d-1)/2$ via the functor $\Delta(1)\otimes (-)^F$, see for example
\citep[Lemma 1]{EH} or \citep[Section 4, Theorem]{zbMATH00645087}.

\begin{enumerate} 
	\item \label{4dot3dot1} \begin{enumerate}[(a)]
		\item Let $m=2t$.  There is an exact sequence  of $S$-modules
		$$0\to \Delta(t-1)^F\to \Delta(m)\to \Delta(t)^F\to 0$$ 
		Taking contravariant duals gives the analog for costandard modules.
		\item Let $m=2t+1$, then
		$\Delta(m)\cong L(1)\otimes \Delta(t)^F$. 
	\end{enumerate}
			(See for example \citep[Prop. 3.3]{Cox}).
\end{enumerate}

We note that this determines recursively the decomposition numbers,
as input using that $\Delta(t)$ is simple and isomorphic to $L(t)$ for
$t=0, 1$, recall $p=2$. 
This can also be used to show that when $p=2$ and $d$ is even, the algebra
$S(2, d)$ is indecomposable. Further, this also implies, by induction, that the decomposition numbers are always $0$ and $1$ when $p=2$ and $d$ is even.

\begin{enumerate}[resume]
\item We have a complete  description of the indecomposable tilting modules
in this case. 
		 We have already described   $T(m)$ for $m\leq 2$. 
		The following is due to S. Donkin, see \citep[Example 2 p. 47]{Do}. 
\end{enumerate}

\begin{Prop}\label{prop:4.3.1} Let $m=2s$ and $m\geq 2$, then
$$T(m)\cong T(2)\otimes T(s-1)^F$$
        If $m=2s+1$, then $T(r) \cong T(1) \otimes T(s)^F$.
\end{Prop}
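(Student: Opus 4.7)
The plan is to prove both formulas by induction on $m$, using the base cases $m \leq 2$ that are already in hand. I will handle the odd case first and then feed it into the even case.

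For the odd case $m = 2s+1$, set $M := T(1) \otimes T(s)^F = V \otimes T(s)^F$. Given a $\Delta$-filtration of $T(s)$ with successive quotients $\Delta(\mu_i)$, the exact functor $X \mapsto V \otimes X^F$ produces a filtration of $M$ whose quotients are $V \otimes \Delta(\mu_i)^F = L(1) \otimes \Delta(\mu_i)^F = \Delta(2\mu_i+1)$ by (4.3.1)(b); hence $M \in \cF(\Delta)$. A dual argument, using the contravariant dual of (4.3.1)(b) to get $\nabla(2\mu+1) \cong V \otimes \nabla(\mu)^F$, shows $M \in \cF(\nabla)$. Thus $M$ is a tilting module of highest weight $2s+1$. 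For the identification $M \cong T(2s+1)$ the cleanest route is to invoke the Morita equivalence $\Delta(1) \otimes (-)^F$ between the relevant blocks of $S(2, s)$ and $S(2, 2s+1)$ recalled just before Subsection \ref{Twisted tensor product methods}. This exact equivalence sends $\Delta(\mu) \mapsto \Delta(2\mu+1)$ and $\nabla(\mu) \mapsto \nabla(2\mu+1)$, hence restricts to an equivalence of tilting subcategories, and on objects it is precisely $X \mapsto V \otimes X^F$. So $M$ is the image of the indecomposable tilting $T(s)$ under an equivalence; being indecomposable tilting of highest weight $2s+1$, it equals $T(2s+1)$.

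For the even case $m = 2s$ with $s \geq 1$, set $M := T(2) \otimes T(s-1)^F$. The strategy mirrors the odd case, but $\cF(\Delta)$-membership is more delicate, because $\Delta(\mu)^F$ need not lie in $\cF(\Delta)$ (for instance $\Delta(1)^F = L(2)$ is not $\Delta$-filtered). I combine two short exact sequences: the defining sequence $0 \to \Delta(2) \to T(2) \to \Delta(0) \to 0$ of $T(2)$, and the sequence $0 \to \Delta(0)^F \to \Delta(2) \to \Delta(1)^F \to 0$ from (4.3.1)(a) at $t=1$. Tensoring both with $T(s-1)^F$ exhibits $M$ as a three-step extension whose layers are $T(s-1)^F$, $(V \otimes T(s-1))^F$, and $T(s-1)^F$. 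By the Pieri-type principle for $SL_2$ (namely $V \otimes \Delta(\mu) \in \cF(\Delta)$, and dually) $V \otimes T(s-1)$ is itself a tilting module whose indecomposable decomposition is supplied by the inductive hypothesis. Reassembling these pieces using (4.3.1)(a) in reverse, to pair Frobenius twists into genuine standard modules, yields a $\Delta$-filtration of $M$; dually a $\nabla$-filtration. Hence $M$ is tilting of highest weight $2s$, containing $T(2s)$ as a summand. A character comparison $\chi(M) = \chi(T(2)) \cdot \chi(T(s-1))^F$ against the recursive determination of $\chi(T(2s))$ via (4.3.1)(a) then identifies $M \cong T(2s)$.

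The main obstacle will be the filtration analysis in the even case: because individual twists $\Delta(\mu)^F$ can fail to be $\Delta$-filtered, the naive passage of the Frobenius twist through the $\Delta$-filtration of $T(s-1)$ does not work. The remedy is to exploit the $T(2)$ factor, in combination with (4.3.1)(a), to pair up these twists into short exact sequences that recreate genuine standard modules. The precise bookkeeping, together with the careful management of the induction over the decomposition of $V \otimes T(s-1)$, is where the real work lies.
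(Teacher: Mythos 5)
The paper itself gives no proof of this statement: it is quoted from Donkin (\citep[Example 2, p.~47]{Do}), so your attempt has to be measured against the standard argument rather than against anything in the text. Your odd case is essentially complete and correct: the twisted-filtration argument via $L(1)\otimes\Delta(t)^F\cong\Delta(2t+1)$ and its contravariant dual shows $V\otimes T(s)^F$ is a tilting module with highest weight $2s+1$ of multiplicity one, and indecomposability comes for free from the block equivalence $\Delta(1)\otimes(-)^F$ recorded in the paper just before Subsection \ref{Twisted tensor product methods}.

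The even case, however, has a genuine gap at the final identification. Even granting that $M=T(2)\otimes T(s-1)^F$ lies in $\cF(\Delta)\cap\cF(\nabla)$ with highest weight $2s$ occurring once --- which is true, and is obtained much more cleanly from the sequence $0\to\Delta(2t+2)\to T(2)\otimes\Delta(t)^F\to\Delta(2t)\to 0$ of Proposition \ref{prop:4.3.2} applied to the $\Delta(t)^F$-layers of $T(s-1)^F$ (as in Lemma \ref{lem:4.3.3}) than from your three-layer filtration, whose middle layer $(V\otimes T(s-1))^F$ forces a regrouping across extension boundaries that you have not carried out --- this only yields $M\cong T(2s)\oplus N$ with $N$ tilting and supported on strictly smaller weights. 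Your proposed character comparison cannot rule out $N\neq 0$: the recursion in item (1)(a) of Subsection \ref{Twisted tensor product methods} determines characters of \emph{standard} modules (which are Weyl characters and known anyway), not of the indecomposable tilting module $T(2s)$; the character of $T(2s)$, equivalently its $\Delta$-filtration multiplicities, is precisely what Propositions \ref{prop:4.3.1} and \ref{prop:4.3.2} together compute, so appealing to it here is circular. What is missing is an indecomposability argument for $T(2)\otimes X^F$ when $X$ is indecomposable. Donkin's proof restricts to the first Frobenius kernel $G_1$: for $p=2$ one has $T(2)|_{G_1}=\mathrm{St}_1\otimes\mathrm{St}_1$, the projective and injective hull of the trivial $G_1$-module, from which one deduces that $\operatorname{End}_G(T(2)\otimes X^F)$ is local whenever $\operatorname{End}_G(X)$ is. Without this input (or an equivalent one, such as the simple-top property of Remark \ref{rem:4.1}(2), which is itself established downstream of the structure theorem), the even case is not closed.
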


This describes recursively  all indecomposable tilting modules.
Note that tilting modules are not changed if the degree increases.

The following shows that  filtration multiplicities $[T(m):\Delta(w)]$ are $\leq 1$.

\begin{Prop}\label{prop:4.3.2} The $\Delta$-filtration multiplicities
	of indecomposable tilting
modules in even degree can be computed
recursively from
$$0\to \Delta(2t+2) \to T(2)\otimes \Delta(t)^F \to \Delta(2t) \to 0.$$
\end{Prop}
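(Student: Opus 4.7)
The plan is to identify $T(2) \otimes \Delta(t)^F$ with $V \otimes \Delta(2t+1)$ and then establish a Clebsch--Gordan-type short exact sequence for tensoring a standard module with the natural module $V$; combined with Proposition \ref{prop:4.3.1}, this yields the claimed recursion on $\Delta$-filtration multiplicities.

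First, the formula $\Delta(2t+1) \cong L(1) \otimes \Delta(t)^F$ stated in Subsection \ref{Twisted tensor product methods} together with $L(1) = V$ and $T(2) = V \otimes V$ gives
\[
T(2) \otimes \Delta(t)^F \;\cong\; V \otimes V \otimes \Delta(t)^F \;\cong\; V \otimes \Delta(2t+1),
\]
reducing the task to computing a $\Delta$-filtration of $V \otimes \Delta(2t+1)$.

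The core step is then to establish, for every $m \geq 1$, the short exact sequence
\[
0 \to \Delta(m+1) \to V \otimes \Delta(m) \to \Delta(m-1) \to 0.
\]
Since $V = \Delta(1) = T(1)$ is simple, it is simultaneously standard and tilting, and hence $V \otimes \Delta(m) \in \mathcal{F}(\Delta)$ by Donkin's theorem (or by direct verification for $SL_2$). A straightforward $SL_2$-character computation gives $\chi(V \otimes \Delta(m)) = \chi(\Delta(m+1)) + \chi(\Delta(m-1))$, so these are exactly the $\Delta$-quotients of $V \otimes \Delta(m)$, each with multiplicity one. By Lemma \ref{lem:4.2.1} (weights in a $\Delta$-filtration increase from top to bottom), $\Delta(m+1)$ must appear as the submodule, pinning down the direction of the sequence. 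Specialising to $m = 2t+1$ yields the displayed exact sequence.

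For the recursion, Proposition \ref{prop:4.3.1} gives $T(2s) \cong T(2) \otimes T(s-1)^F$ for $s \geq 1$. Given inductively a $\Delta$-filtration of $T(s-1)$, its Frobenius twist provides a filtration of $T(s-1)^F$ whose quotients are of the form $\Delta(t)^F$; tensoring with $T(2)$ (exact over $K$) and splicing in the displayed sequence at each such quotient yields a $\Delta$-filtration of $T(2s)$, and extracting multiplicities gives
\[
[T(2s):\Delta(w)] = [T(s-1):\Delta(\tfrac{w-2}{2})] + [T(s-1):\Delta(\tfrac{w}{2})],
\]
with the convention that non-admissible arguments contribute zero. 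The main obstacle is the verification of the Clebsch--Gordan sequence: while the character computation and the existence of the $\Delta$-filtration are classical, the identification of $\Delta(m+1)$ as the submodule rather than the quotient is essential and relies on the filtration ordering of Lemma \ref{lem:4.2.1}.
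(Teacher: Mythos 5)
Your proof is correct, and it is genuinely different in character from the paper's: the paper disposes of this proposition with a one-line citation ("one may specialize [Cox, Prop.\ 3.4]"), whereas you reconstruct the statement from first principles. Your route --- rewriting $T(2)\otimes\Delta(t)^F \cong V\otimes V\otimes\Delta(t)^F \cong V\otimes\Delta(2t+1)$ via the identity $\Delta(2t+1)\cong L(1)\otimes\Delta(t)^F$, then establishing the Clebsch--Gordan sequence $0\to\Delta(m+1)\to V\otimes\Delta(m)\to\Delta(m-1)\to 0$ by combining the character identity, the fact that a tensor product of Weyl-filtered modules is Weyl-filtered, and the ordering of Lemma \ref{lem:4.2.1} --- is sound, and your splicing argument via Proposition \ref{prop:4.3.1} correctly extracts the multiplicity recursion $[T(2s):\Delta(w)] = [T(s-1):\Delta(\tfrac{w-2}{2})]+[T(s-1):\Delta(\tfrac{w}{2})]$ (this splicing is essentially what the paper does afterwards in Lemma \ref{lem:4.3.3}). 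What your approach buys is self-containedness and transparency about where the sequence comes from; what the citation buys is uniform coverage of the quantum case and general $\ell$, which the paper needs later. Two small points of care: Lemma \ref{lem:4.2.1} guarantees the \emph{existence} of a filtration with $\Delta(m+1)$ as the submodule rather than forcing every filtration to have that shape (if the sequence split, both orders would occur), but existence is all that is needed to produce the displayed exact sequence; and your appeal to "Donkin's theorem" for $V\otimes\Delta(m)\in\mathcal{F}(\Delta)$ is an external input, though for $SL_2$ it is classical and easily verified directly, as you note.
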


To prove this, one may  specialize \citep[Prop. 3.4]{Cox}.

We will see below that modules $T(2)\otimes X^F$ for $X$ in $\cF(\Delta)$ have
infinite relative dominant dimension with respect to $V^{\otimes d}$. This means that we can use 
Lemma \ref{lem:2.3.2} (from (3.1.7) of \cite{Cr2}) to relate the  relative $V^{\otimes d}-$dominant dimension of the end terms, and this suggests a route 
towards the proof of our second main result.

We define a  {\it twisted filtration} of a module  $M\in \cF(\Delta)$ to be  a  filtration
where each quotient is isomorphic to
$T(2)\otimes \Delta(t)^F$ for some $t$.

\begin{Lemma}\label{lem:4.3.3} Let $m=2s\geq 1$. Then the tilting module $T(m)$  has a twisted filtration
$$0= M_k \subset M_{k-1} \subset \ldots \subset M_1\subset M_0=T(m)$$
with $M_{i-1}/M_i \cong T(2)\otimes \Delta(s_i)^F$, with quotients
        $\Delta(2s_i)$ and $\Delta(2s_i+2)$, for $s_1 < s_2<\ldots < s_k$.
\end{Lemma}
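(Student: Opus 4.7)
My plan is to lift a $\Delta$-filtration of $T(s-1)$ to $T(m)$ through the isomorphism $T(m) \cong T(2) \otimes T(s-1)^F$ provided by Proposition \ref{prop:4.3.1}. Using Lemma \ref{lem:4.2.1}, I would fix a filtration $0 = N_k \subset N_{k-1} \subset \ldots \subset N_0 = T(s-1)$ with $N_{i-1}/N_i \cong \Delta(s_i)$ and $s_1 \leq s_2 \leq \ldots \leq s_k$. Since the Frobenius twist and tensoring over $K$ with $T(2)$ are both exact, setting $M_i := T(2) \otimes N_i^F$ produces a filtration of $T(m)$ with successive quotients $M_{i-1}/M_i \cong T(2) \otimes \Delta(s_i)^F$. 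Proposition \ref{prop:4.3.2} then realises each such quotient as an extension $0 \to \Delta(2s_i+2) \to M_{i-1}/M_i \to \Delta(2s_i) \to 0$, yielding precisely the stated standard sub-quotients.

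The remaining task is to promote $s_1 \leq \ldots \leq s_k$ to strict inequalities. I would do this by showing, by induction on the degree, that every indecomposable tilting module $T(r)$ in the present even-characteristic setting has $\Delta$-filtration multiplicities at most one. The base cases $T(0) = \Delta(0)$ and $T(1) = \Delta(1)$ are immediate. For odd degree $r = 2t+1$, the isomorphism $T(r) \cong L(1) \otimes T(t)^F$ combined with $L(1) \otimes \Delta(t')^F \cong \Delta(2t'+1)$ (from Subsection \ref{Twisted tensor product methods}) transfers the multiplicity-one property directly from $T(t)$ to $T(r)$. For even degree $r = 2s$, the filtration constructed above has $\Delta$-factors $\{\Delta(2s_i),\Delta(2s_i+2)\}_i$; since all highest weights of simple $S(2, s-1)$-modules share the parity of $s-1$, no two of the $s_i$'s differ by exactly one, so the identity $2s_i = 2s_j + 2$ cannot hold. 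Combined with the inductive distinctness of the $s_i$'s, this yields multiplicity-one for $T(r)$ and hence the strict inequalities claimed in the lemma.

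The central subtlety is this parity argument in the even case; once the multiplicity-one statement is in hand, everything else reduces to tracking the exact functors $T(2) \otimes (-)^F$ on the standard filtration of $T(s-1)$.
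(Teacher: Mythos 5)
Your proof is correct and follows essentially the same route as the paper: apply the exact functor $T(2)\otimes(-)^F$ to a $\Delta$-filtration of $T(s-1)$ with strictly increasing weights and identify the subquotients via Proposition \ref{prop:4.3.2}. The only difference is that you additionally supply an inductive parity argument for the multiplicity-one property of tilting modules (needed to make the $s_i$ strictly increasing), which the paper takes as known from Proposition \ref{prop:4.3.2} and the surrounding discussion; your argument for it is valid.
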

\begin{proof}
	We have  $T(m)\cong T(2)\otimes T(s-1)^F$. The module
	$T(s-1)$ has a $\Delta$-filtration 
	$$N_k=0\subset N_{k-1} \subset \ldots \subset N_0=T(s-1)$$
	with $N_{i-1}/N_i \cong \Delta(s_i)$ and such that
	$s_1 < s_2 < \ldots < s_k$, by Lemma \ref{lem:4.2.1}. 
	Applying the exact functor $T(2)\otimes (-)^F$ gives the claim.
\end{proof}

\begin{Remark}  \label{rem:4.1} \  
	\begin{enumerate}
		\item The algebra $S(2, d)$ is Ringel self-dual for $p=2$ and $d$ even
		if and only if $d=2^{n+1}-2$ for some $n$, see \cite{EH} and for
		a functorial proof see \cite{EL}. 
		In  \cite{EH} and \cite{EL}  it was identified precisely
		which tilting modules are projective (and injective) for these degrees.
		\item Each $T(m)$ has a simple top, this is also part of \cite{EH}, \cite{EL}.
	\end{enumerate}
\end{Remark}

\section{The relative dominant dimension of the regular module with respect to $V^{\otimes d}$}\label{dominant dimension of the regular module}

Let $S=S(2, d)$ and assume that $K$ has characteristic $p$. Recall that 
the indecomposable summands of  $V^{\otimes d}$ are precisely the $T(\lambda)$ where $\lambda$ is a partition of
$d$ with at most $2$ parts, such that $\lambda$ does not have $p$ equal parts. 
Recall that we   identify $\lambda$ with $m=\lambda_1-\lambda_2$. Hence
unless $p=2$ and $d$ is even, all indecomposable summands of $T$ occur in $V^{\otimes d}$, and then $V^{\otimes d}\domdim_S S=\infty$, by the following:

\begin{Lemma} \label{lem:5.0.1} If $V^{\otimes d}$ has all $T(\lambda)$ as direct summands, then 
	$$\inf\{ V^{\otimes d}\domdim_S M\colon M\in \mathcal{F}(\St) \}=+\infty.$$
\end{Lemma}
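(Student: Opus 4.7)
The plan is to reduce to relative dominant dimension with respect to the full characteristic tilting module $T$, and then to invoke Proposition \ref{prop:2.3.4} via an iterated tilting coresolution of any module in $\mathcal{F}(\St)$.

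First, I would observe that the hypothesis forces $\add V^{\otimes d} = \add T$. The tensor power $V^{\otimes d}$ is known to be a direct sum of indecomposable tilting modules, and by assumption every $T(\lambda)$ occurs as one of these summands. Consequently, $V^{\otimes d}\domdim_S M = T\domdim_S M$ for every $M \in S\m$, so the goal becomes $T\domdim_S M = +\infty$ for every $M \in \mathcal{F}(\St)$. Being tilting, $T$ satisfies $\Ext_S^{i>0}(T, T) = 0$; and since $T \in \mathcal{F}(\Cs)$ we have $\mathcal{F}(\St) \subseteq {}^\perp T$ by the standard $\Ext$-orthogonality between $\mathcal{F}(\St)$ and $\mathcal{F}(\Cs)$. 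Hence Proposition \ref{prop:2.3.4} is applicable with $Q = T$.

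The core step is to show, by induction on the length of a $\St$-filtration, that every $N \in \mathcal{F}(\St)$ sits in a short exact sequence $0 \to N \to T_N \to N' \to 0$ with $T_N \in \add T$ and $N' \in \mathcal{F}(\St)$. The base $N = \St(\lambda)$ is furnished by the standard sequence \eqref{(4.1)}. For the inductive step, take $0 \to N' \to N \to \St(\lambda) \to 0$ with $N' \in \mathcal{F}(\St)$ shorter; applying the inductive hypothesis gives $0 \to N' \to T_{N'} \to N'' \to 0$. The vanishing $\Ext_S^1(\St(\lambda), T_{N'}) = 0$ (since $\St(\lambda) \in \mathcal{F}(\St)$ and $T_{N'} \in \mathcal{F}(\Cs)$) lets one extend $N' \to T_{N'}$ to a map $\phi \colon N \to T_{N'}$. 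Paired with the composition $\psi \colon N \twoheadrightarrow \St(\lambda) \hookrightarrow T(\lambda)$, the map $(\phi, \psi) \colon N \to T_{N'} \oplus T(\lambda)$ is injective, and the Snake Lemma applied to the obvious commutative diagram with rows $0 \to N' \to N \to \St(\lambda) \to 0$ and $0 \to T_{N'} \to T_{N'} \oplus T(\lambda) \to T(\lambda) \to 0$ identifies the cokernel as an extension of $X(\lambda)$ by $N''$, hence a member of $\mathcal{F}(\St)$.

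Iterating this construction starting from $N = M$ splices into an infinite exact sequence $0 \to M \to T_0 \to T_1 \to \cdots$ with each $T_i \in \add T$ and every syzygy in $\mathcal{F}(\St) \subseteq {}^\perp T$. Any truncation after $n$ terms meets the hypotheses of Proposition \ref{prop:2.3.4}, yielding $T\domdim_S M \geq n$; since $n$ is arbitrary, we conclude $V^{\otimes d}\domdim_S M = +\infty$. The only slightly delicate step is the inductive construction of the tilting preenvelope in the third paragraph, but this is a routine consequence of Ringel's theory of characteristic tilting modules; no serious obstacle is anticipated.
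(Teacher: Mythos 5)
Your proof is correct and follows essentially the same route as the paper: coresolve $M$ by modules in $\add T = \add V^{\otimes d}$ and exploit the $\Ext$-orthogonality $\mathcal{F}(\St)\subseteq {}^\perp T$ to conclude. The only difference is that the paper cites Ringel's theorem that every module in $\mathcal{F}(\St)$ admits a \emph{finite} $\add T$-coresolution and then applies Corollary \ref{ddusingcoresolutions}, whereas you rebuild the coresolution by hand through iterated tilting preenvelopes and truncate an infinite coresolution to apply Proposition \ref{prop:2.3.4} --- a correct, self-contained rendering of the same argument.
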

\begin{proof}
 Every module in $\mathcal{F}(\St)$ admits a finite $\add T$-coresolution (see for example \citep[Lemma 6]{Ri} or \citep{zbMATH03981431}) which, in particular, remains exact under $\Hom_S(-,V^{\otimes d})$. By Corollary \ref{ddusingcoresolutions}, the result follows.
\end{proof}

\subsection{The characteristic two case}

 Lemma \ref{lem:5.0.1} leaves us to consider $p=2$ and $d$ even $(\neq 0$).  In this case, as mentioned above, the components of $V^{\otimes d}$ are the $T(m)$ with $m\neq 0$. 
The standard sequence (\ref{(4.1)})  is an $\add T$-approximation, this follows from 
a special case of Proposition \ref{prop:2.3.4}. 
In particular,
\begin{align}
	V^{\otimes d}\domdim_S \Delta(d) = 1 + V^{\otimes d}\domdim_S X(d).  \label{eq12}
\end{align}

\begin{Theorem} \label{thm:5.0.2} Let $d=2s> 0$. We have
	$V^{\otimes d}\domdim_S (\Delta({d})) = s + V^{\otimes d}\domdim_S (T(0))$.
	\end{Theorem}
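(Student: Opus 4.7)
My plan is to iterate the exact sequence of Proposition \ref{prop:4.3.2}, which for each $t \geq 0$ reads
\[
0 \to \Delta(2t+2) \to T(2) \otimes \Delta(t)^F \to \Delta(2t) \to 0.
\]
Since the cokernel $\Delta(2t) \in \cF(\Delta)$ and $V^{\otimes d} \in \cF(\nabla)$, this sequence remains exact after applying $\Hom_S(-, V^{\otimes d})$. Granted that the middle term satisfies $V^{\otimes d}\domdim_S(T(2) \otimes \Delta(t)^F) = +\infty$ (the key step below), the extension of Lemma \ref{lem:2.3.2} cited as Lemma 3.1.7 of \cite{Cr2} forces $V^{\otimes d}\domdim_S \Delta(2t+2) = V^{\otimes d}\domdim_S \Delta(2t) + 1$ in $\bN\cup\{\infty\}$. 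Iterating from $t = 0$ up to $t = s - 1$, and using $\Delta(0) = T(0)$ as starting point, then yields the claim.

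The crux is to prove $V^{\otimes d}\domdim_S(T(2) \otimes \Delta(t)^F) = +\infty$ for every $t \geq 0$ with $2t+2 \leq d$. I would prove this by induction on $t$, with the base case $t = 0$ being immediate since $T(2) \otimes \Delta(0)^F = T(2) \in \add V^{\otimes d}$. For the inductive step, apply the exact functor $T(2) \otimes (-)^F$ to the standard sequence $0 \to \Delta(t) \to T(t) \to X(t) \to 0$ to obtain
\[
0 \to T(2) \otimes \Delta(t)^F \to T(2t+2) \to T(2) \otimes X(t)^F \to 0,
\]
where Proposition \ref{prop:4.3.1} identifies the middle term with the tilting module $T(2t+2) \in \add V^{\otimes d}$. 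Since $X(t) \in \cF(\Delta)$ has $\Delta$-quotients only of weights strictly less than $t$, a secondary induction along that filtration---using Proposition \ref{prop:4.3.2} to see that every subquotient of the form $T(2) \otimes \Delta(t_i)^F$ again lies in $\cF(\Delta)$---shows that $T(2) \otimes X(t)^F$ lies in $\cF(\Delta)$ and inherits infinite $V^{\otimes d}$-dominant dimension. Its membership in $\cF(\Delta)$ makes the displayed sequence $\Hom_S(-, V^{\otimes d})$-exact, and the contrapositive of Lemma \ref{lem:2.3.2}(b) (middle infinite and quotient infinite forces the sub to be infinite) gives $V^{\otimes d}\domdim_S(T(2) \otimes \Delta(t)^F) = +\infty$.

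The principal obstacle is managing this secondary induction. Note that the Frobenius twist alone does not preserve $\cF(\Delta)$---for example $\Delta(1)^F \cong L(2)$ is not in $\cF(\Delta)$---so it is essential that every module encountered along the iteration appears in the form $T(2) \otimes Y^F$ for some $Y \in \cF(\Delta)$; Proposition \ref{prop:4.3.2} then supplies the required $\Delta$-filtration. Granted this, the outer iteration is routine; at $t = 0$ it simply reads $0 \to \Delta(2) \to T(2) \to T(0) \to 0$, whose middle $T(2) \in \add V^{\otimes d}$ directly yields $V^{\otimes d}\domdim_S \Delta(2) = 1 + V^{\otimes d}\domdim_S T(0)$.
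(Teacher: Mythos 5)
Your proposal is correct and follows essentially the same route as the paper: the paper also iterates the sequences $0\to \Delta(2t+2)\to T(2)\otimes\Delta(t)^F\to\Delta(2t)\to 0$ from Proposition \ref{prop:4.3.2} via Lemma \ref{lem:2.3.2}, after first establishing (its Lemma \ref{lem:5.0.3}) that $T(2)\otimes X^F$ has infinite $V^{\otimes d}$-dominant dimension for $X\in\cF(\Delta)$ by the same induction through $0\to T(2)\otimes\Delta(t)^F\to T(2t+2)\to T(2)\otimes X(t)^F\to 0$. Your explicit remark that the Frobenius twist alone does not preserve $\cF(\Delta)$, so every term must be kept in the form $T(2)\otimes Y^F$, is a correct and useful clarification of a point the paper leaves implicit.
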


 To prove this, we will use Lemma \ref{lem:2.3.2} on extensions of $\St(t)$ by $\St(2+t)$.

The cases  $d=2$ and $d=4$ are easy.
\begin{enumerate}
	\item For $d=2$ we have the exact sequence $0\to \Delta(2) \to T(2) \to \Delta(0) = T(0)\to 0$, which proves the statement of the Theorem  by Corollary \ref{ddusingcoresolutions}. 
	\item Let $d=4$, we have the exact sequence $0\to \Delta(4) \to T(4) \to \Delta(2)\to 0$. 
	Splicing this with the sequence for $\Delta(2)$ gives the claim.
\end{enumerate}

Degrees $d\geq 6$ need more work.
The main ingredient is the observation that subquotients of the form
$T(2)\otimes N^F$ with $N\in \cF(\Delta)$ are not relevant for
a minimal $V^{\otimes d}$-approximation.

\begin{Lemma}\label{lem:5.0.3} Let $X\in S(2, s)\m$.
Assume  that $X\in \cF(\Delta)$. Then  the module
	$T(2)\otimes X^F$ has infinite relative dominant dimension with respect to $V^{\otimes d}$ for any even degree $d$ greater or equal to $2s+2$.
	\end{Lemma}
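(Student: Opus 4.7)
The plan is to exhibit a finite $\add V^{\otimes d}$-coresolution of $T(2)\otimes X^F$ that is $\Hom_S(-,V^{\otimes d})$-exact with every intermediate cokernel in ${}^\perp V^{\otimes d}$; appending zero modules on the right and invoking Proposition \ref{prop:2.3.4} will then force $V^{\otimes d}\domdim_S(T(2)\otimes X^F)=\infty$.

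Since $X\in\mathcal{F}(\St)$ over $S(2,s)$, Ringel's theorem gives a finite exact coresolution
\[ 0\to X\to U_0\to U_1\to\cdots\to U_k\to 0 \]
by partial tilting $S(2,s)$-modules whose intermediate images $C_i=\operatorname{im}(U_i\to U_{i+1})$ again lie in $\mathcal{F}(\St)$. I would apply the exact functor $T(2)\otimes(-)^F$, which preserves exactness, and then use Proposition \ref{prop:4.3.1}, namely $T(2)\otimes T(t)^F\cong T(2t+2)$, to conclude that every indecomposable summand of the resulting terms $T(2)\otimes U_i^F$ is a tilting module $T(m)$ with $m$ even and $2\le m\le 2s+2\le d$. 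Under the convention of Subsection 4.1, each such $T(m)$ is an indecomposable summand of $V^{\otimes d}$, and so the resulting sequence
\[ 0\to T(2)\otimes X^F\to T(2)\otimes U_0^F\to\cdots\to T(2)\otimes U_k^F\to 0 \]
is an $\add V^{\otimes d}$-coresolution of $T(2)\otimes X^F$ inside $S(2,d)\m$.

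The next step is to verify that each intermediate cokernel $T(2)\otimes C_i^F$ lies in ${}^\perp V^{\otimes d}$. Here I would invoke Proposition \ref{prop:4.3.2}, which realises $T(2)\otimes\St(t)^F$ as an extension of $\St(2t)$ by $\St(2t+2)$; iterating along a $\St$-filtration of $C_i$ produces a $\St$-filtration of $T(2)\otimes C_i^F$, so $T(2)\otimes C_i^F\in\mathcal{F}(\St)\subseteq {}^\perp\mathcal{F}(\Cs)\subseteq {}^\perp V^{\otimes d}$, where the last inclusion holds because $V^{\otimes d}$ is a direct sum of tilting modules and hence sits in $\mathcal{F}(\Cs)$. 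The same argument shows $T(2)\otimes X^F\in{}^\perp V^{\otimes d}$, and appending arbitrarily many zero terms to the right of the coresolution above yields, via Proposition \ref{prop:2.3.4}, the inequality $V^{\otimes d}\domdim_S(T(2)\otimes X^F)\ge n$ for every $n\in\mathbb{N}$.

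The argument rests on three ingredients already in place: Ringel's tilting coresolutions, the Frobenius-twist identities of Propositions \ref{prop:4.3.1} and \ref{prop:4.3.2}, and the cohomological criterion of Proposition \ref{prop:2.3.4}. I do not foresee a serious obstacle; the only delicate point is the degree bookkeeping $2s+2\le d$, which is precisely what ensures that every tilting summand produced by the Frobenius twist still fits inside $\add V^{\otimes d}$.
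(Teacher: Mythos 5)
Your argument is correct, but it takes a genuinely different route from the paper's. The paper first reduces to $X=\Delta(t)$ via Lemma \ref{lem:2.3.2} and then inducts on the weight, using the twisted standard sequence $0\to T(2)\otimes\Delta(s)^F\to T(2)\otimes T(s)^F\to T(2)\otimes X(s)^F\to 0$: the middle term is $T(2s+2)\in\add V^{\otimes d}$, the right term has infinite relative dominant dimension by induction, and Lemma \ref{lem:2.3.2}(b) then forces the left term to have infinite relative dominant dimension as well. You instead twist an entire finite $\add T$-coresolution of $X$ over $S(2,s)$ and observe that $T(2)\otimes(-)^F$ carries $\add T_{S(2,s)}$ into $\add V^{\otimes d}$ (via Proposition \ref{prop:4.3.1}, with the degree bound $2t+2\le 2s+2\le d$ and the parity check you perform) and carries $\cF(\Delta)$ into $\cF(\Delta)$ (via Proposition \ref{prop:4.3.2}), so that Proposition \ref{prop:2.3.4} — or equivalently Corollary \ref{ddusingcoresolutions} applied with right-hand term in $\add V^{\otimes d}$ — yields infinite relative dominant dimension in one stroke. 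This is essentially the mechanism of the paper's proof of Lemma \ref{lem:5.0.1} transported along the functor $T(2)\otimes(-)^F$; it is more uniform and avoids the induction, at the cost of invoking Ringel's finite tilting coresolutions, whereas the paper's induction only needs the individual standard sequences. All the ingredients you use (exactness of the twist, indecomposability and degree of $T(2)\otimes T(t)^F$, $\cF(\Delta)\subseteq{}^\perp V^{\otimes d}$ since $V^{\otimes d}\in\cF(\Cs)$, and the harmless padding by zero modules) check out.
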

\begin{proof}
	By Lemma \ref{lem:2.3.2} it suffices to prove this when $X=\Delta(s)$. 
	We proceed by induction on $s$. When $s=0$ or $s=1$ we see that $T(2)\otimes \Delta(s)^F$ is a summand of $V^{\otimes d}$. 
	For the inductive step consider 
	the exact sequence 
	$$0\to T(2)\otimes \Delta(s)^F  \to T(2)\otimes T(s)^F  \to T(2)\otimes X(s)^F \to 0.$$  
	The middle term is isomorphic to 
	$T(2 + 2s)$. Since $X(s)$ has a filtration with quotients $\Delta(t)$ for $t< s$ it follows by induction
	(and Lemma \ref{lem:2.3.2}) that  the $T(2)\otimes X(s)^F$ has infinite \mbox{$V^{\otimes d}$-dominant dimension} for any even degree $d\geq 2s+2$.
	We deduce that $T(2)\otimes \Delta(s)^F$ has infinite $V^{\otimes d}$-dominant dimension as well.
\end{proof}

\begin{proof}[\textbf{Proof of  Theorem \ref{thm:5.0.2}}]
Assume $d=2s\geq 4$.
 By Proposition \ref{prop:4.3.2}, there exists $S(2, d)$-exact sequences \begin{align}
		0\rightarrow \St(2t+2)\rightarrow T(2)\otimes \St(t)^F\rightarrow \St(2t)\rightarrow 0,
	\end{align} for every $0\leq t\leq s-1$. Moreover, they remain exact over $\Hom_{S(2, d)}(-, V^{\otimes d})$ since $V^{\otimes d}$ is a partial tilting module and so $\Ext_{S(2,d)}^1(\St(2t+2), V^{\otimes d})=0$. By Lemma \ref{lem:5.0.3}, $T(2)\otimes \St(t)^F$ has infinite relative dominant dimension with respect to $V^{\otimes d}$ for $0\leq t\leq s-1$. By Lemma \ref{lem:2.3.2}, \begin{align}
	V^{\otimes d}\domdim_{S(2, d)} \St(2t+2)=1+V^{\otimes d}\domdim_{S(2, d)} \St(2t), \quad 0\leq t\leq s-1.
\end{align}Hence, $V^{\otimes d}\domdim_{S(2, d)} \St(0)=s+V^{\otimes d}\domdim_{S(2, d)} \St(d)$. 
\end{proof}

We will now determine the relative dominant dimension of $S(2,d)$ with respect to $V^{\otimes d}$. 
Let $P_d(m)$ be the indecomposable projective $S(2, d)$-module with homomorphic image $L(m)$. Throughout $d$ and $m$ are even.

\begin{Lemma}\label{lem:5.0.4}
        Consider a projective module $P_d(m)$ where $d$ and $m$ are
        even with $m < d$. Then one of the following holds.
        \begin{enumerate}[(a)]
        	\item The number of  quotients in a $\Delta$-filtration of $P_d(m)$ is even and
        	$P_d(m)$ has a twisted filtration.
        	\item There is an exact sequence
        	$$0\to \Delta(d) \to P_d(m) \to P_{d-2}(m)\to 0$$
        	and $P_{d-2}(m)$ has a twisted filtration.
        \end{enumerate}
\end{Lemma}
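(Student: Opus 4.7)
The plan is to induct on $d$ via the short exact sequence (\ref{4.2}), guided by a combinatorial pairing identity on decomposition numbers and the identification of a specific extension class in $\Ext^1_S(\Delta(d-2),\Delta(d))$.

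The combinatorial input I would record first is a \emph{pairing identity}: for every even $m$ and every $k\geq 0$,
\[
(\Delta(m+4k):L(m))=(\Delta(m+4k+2):L(m)).
\]
Writing $m=2u$, the recursion in (\ref{4dot3dot1}(a)) reduces both sides to the equality $(\Delta(u+2k-1):L(u))=(\Delta(u+2k+1):L(u))$, and both sides vanish since the composition factors of $\Delta(t)$ all have weight of the same parity as $t$. Consequently $S_m:=\{w\text{ even}:(\Delta(w):L(m))=1\}$ is a disjoint union of pairs $\{m+4k,\,m+4k+2\}$.

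Now I induct on $d$. The base case $d=2,m=0$ gives $P_2(0)\cong T(2)\cong T(2)\otimes\Delta(0)^F$ and hence case (a). If $(\Delta(d):L(m))=0$, then $P_d(m)\cong P_{d-2}(m)$ and the pairing identity excludes (b) for $P_{d-2}(m)$: otherwise $\Delta(d-4)$ would appear as an unpaired $\Delta$-quotient of $P_{d-4}(m)$, contradicting the twisted filtration guaranteed by (b). If $(\Delta(d):L(m))=1$ and $P_{d-2}(m)$ is in (a), then the sequence (\ref{4.2}) combined with the twisted filtration of $P_{d-2}(m)$ yields case (b) for $P_d(m)$.

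The main obstacle is the remaining sub-case where $(\Delta(d):L(m))=1$ and $P_{d-2}(m)$ satisfies (b). Splicing the two short exact sequences produces a submodule $M\subset P_d(m)$ with $0\to\Delta(d)\to M\to\Delta(d-2)\to 0$ and $P_d(m)/M\cong P_{d-4}(m)$; concatenating $M$ with the twisted filtration of $P_{d-4}(m)$ will produce a twisted filtration of $P_d(m)$ once I establish $M\cong T(2)\otimes\Delta((d-2)/2)^F$. By Proposition~\ref{prop:4.3.2} the latter module represents a nonzero class in $\Ext^1_S(\Delta(d-2),\Delta(d))$, and I would invoke the standard fact (deduced for example from the $SL_2$ linkage principle in characteristic two) that this $\Ext$-group is one-dimensional. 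Thus only non-splitness of $M$ remains: if $M$ were split, the embedding $\Delta(d-2)\hookrightarrow M\hookrightarrow P_d(m)$ would make $\Hom_S(\Delta(d-2),P_d(m))$ nonzero. But applying $\Hom_S(-,P_d(m))$ to the projective cover $P_d(d-2)\twoheadrightarrow\Delta(d-2)$ produces an injection $\Hom_S(\Delta(d-2),P_d(m))\hookrightarrow\Hom_S(P_d(d-2),P_d(m))=[\top P_d(m):L(d-2)]=0$ whenever $m<d-2$. The degenerate case $m=d-2$ reduces directly to Proposition~\ref{prop:4.3.2}, since $P_d(d-2)$ is the unique non-split extension of $\Delta(d-2)$ by $\Delta(d)$ and hence isomorphic to $T(2)\otimes\Delta((d-2)/2)^F$.
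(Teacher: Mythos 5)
Your route is genuinely different from the paper's (which realises $P_d(m)$ as a quotient of a tilting module $T(\hat m)$, using the results of \cite{EH}, \cite{EL}, and lets it inherit the twisted filtration of Lemma \ref{lem:4.3.3}), and the combinatorial half of your argument is correct and attractive: the pairing identity $(\Delta(m+4k):L(m))=(\Delta(m+4k+2):L(m))$ does follow from \ref{4dot3dot1}(a) by the telescoping you describe, since $(\Delta(s)^F:L(2u))=(\Delta(s):L(u))$ and the leftover terms $(\Delta(u+2k\pm1):L(u))$ vanish for parity reasons. This cleanly forces the parity dichotomy between (a) and (b) and correctly disposes of the case $(\Delta(d):L(m))=0$.

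The module-theoretic heart of the remaining case, however, has a genuine gap. You must show that the subquotient $M$ with $0\to\Delta(d)\to M\to\Delta(d-2)\to0$ is isomorphic to $T(2)\otimes\Delta((d-2)/2)^F$, and your non-splitness argument fails. First, $\dim\Hom_S(P_d(d-2),P_d(m))$ equals the \emph{total} composition multiplicity $[P_d(m):L(d-2)]$, not the multiplicity of $L(d-2)$ in the top; since $\Delta(d-2)$ is a $\Delta$-quotient of $P_d(m)$, this dimension is nonzero, so the claimed injection proves nothing. Worse, the vanishing you are aiming for is simply false: the sequence $0\to\Delta(d/2-1)^F\to\Delta(d)\to\Delta(d/2)^F\to0$ exhibits $\Delta(d/2-1)^F$ as a submodule of $\Delta(d)$, and $\Delta(d-2)$ surjects onto $\Delta(d/2-1)^F$, so the composite $\Delta(d-2)\twoheadrightarrow\Delta(d/2-1)^F\hookrightarrow\Delta(d)\subseteq P_d(m)$ shows $\Hom_S(\Delta(d-2),P_d(m))\neq0$ regardless of whether $M$ splits. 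Hence splitness of $M$ cannot be detected by this Hom space at all. In addition, two facts you invoke are not established anywhere in the paper and are not consequences of the linkage principle (which yields vanishing criteria, not dimensions): that the sequence of Proposition \ref{prop:4.3.2} is non-split, and that $\Ext^1_S(\Delta(d-2),\Delta(d))$ is one-dimensional. The latter is true but requires the explicit computation of extensions between Weyl modules for $SL_2$; as written, your proof of the crucial case — and of the degenerate case $m=d-2$, which rests on the same identifications — is incomplete. The paper avoids all of this by obtaining the twisted filtration of $P_d(m)$ directly as a quotient of the twisted filtration of a tilting module.
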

\begin{proof} Our strategy consists of proving that the projective module $P_d(m)$ is a quotient of a tilting module and then to combine this fact with Lemma \ref{lem:4.3.3}.
	Let $r_n:= 2^{n+1}-2$.
	There is a unique $n$
	such that $r_{n-1} < d \leq r_{n}$. By our convention,
	we can view $P_d(m)$ as a module in degree $r_n$. Since it has a simple top isomorphic to $L(m)$, it
	is isomorphic to  a quotient of $P_{r_n}(m)$. 
	
	By results in \cite{EH}, \cite{EL}, we have the following.
	\begin{enumerate}[(i)]
		\item If $0\leq m < (r_n)/2$, then $P_{r_n}(m)$ is a tilting module (in
			fact, it is isomorphic to $T(r_n-m)$).
		\item For $(r_n)/2 < m \leq r_n$, the projective module 
		$P_{r_n}(m)$ is a factor module of the tilting module
		$T(r_{n+1}-m)$. 
	\end{enumerate}

	We exploit this now. 
	Let $\hat{m}$ be the weight as above such that $P_d(m)$ is a quotient of $T(\hat{m})$.
	With the notation as in  Lemma \ref{lem:4.3.3},  since $\mathcal{F}(\St)$ is closed under kernels of epimorphisms
	there is a submodule $U\subseteq T(\hat{m})$ which has a $\Delta$-filtration,  with 
	$M_i \subseteq U \subset M_{i-1} $ where $0< i\leq k$, 
	and $P_{d}(m) \cong  T(\hat{m})/U$.
	
	If $U=M_i$ then we have part (a). Otherwise, $P_d(m)$ has 
	the submodule 
	$M_{i-1}/U$ which is isomorphic to $\Delta(2s_i)$ and $U/M_i \cong \Delta(2s_i+2)$.  Moreover, $P_d(m)/\Delta(2s_i)\simeq T(\hat{m})/M_{i-1}$ which has a twisted filtration.
Since $\Delta(2s_i)\subset P_d(m)$ we deduce $2s_i\leq d$. 
	Suppose we have $2s_i < d$, then $2s_i+2 \leq d$.  Hence, $T(\hat{m})/M_i\in S(2, d)\m$. Since $P_d(m)$ is a quotient of the indecomposable $T(\hat{m})$, $T(\hat{m})$ has a simple top isomorphic to $L(m)$.
	So, the module $T(\hat{m})/M_i$ has a simple top isomorphic to $L(m)$ and
	is in degree $d$, and therefore must be a quotient of $P_d(m)$.  In particular, we would obtain $M_i=U$.
	This is not so in the case considered. Therefore $2s_i=d$  and the result follows from (\ref{4.2}).
\end{proof}

\begin{example}\normalfont \label{ex:5.0.5}
Consider Figure 1, with  $d=28$. Then $r_n=30$ and the projective modules
	$P_d(m)$ for $15 < m < 28$ are as follows. 
	We have (a) when $m=18, 20, 22, 26$
	and we have (b) when $m=16, 24$.
	Note that cases $P_d(m) \cong P_{d-2}(m)$ occur in (a). 
\end{example}

\begin{Cor}\label{cor:5.0.6}
	With the setting as in  Lemma \ref{lem:5.0.4}, \\
        if (a) occurs, then
        $V^{\otimes d}\domdim_S P_d(m) = +\infty$. \\
        If (b) occurs, then for $d=2s$ we have
        $V^{\otimes d}\domdim_S P_d(m)  = V^{\otimes d}\domdim_S \Delta(d).$
	In particular, 
$$V^{\otimes d}\domdim_S S(2, d) = V^{\otimes d}\domdim_S \Delta(d) = 
	(d/2)  + V^{\otimes d}\domdim_ST(0).$$
\end{Cor}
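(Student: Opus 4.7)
The plan is to split the statement into the two cases of Lemma \ref{lem:5.0.4} and then glue the pieces with the decomposition of $S(2,d)$ into indecomposable projectives.

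For case (a), $P_d(m)$ admits a twisted filtration, so its successive quotients are of the form $T(2)\otimes \St(t)^F$. Because the $\Delta$-quotients of $P_d(m)$ have weights at most $d$, each such $t$ satisfies $2t+2 \leq d$, so Lemma \ref{lem:5.0.3} applies and yields that every twisted quotient has infinite relative dominant dimension with respect to $V^{\otimes d}$. Since $V^{\otimes d}$ is a direct summand of a tilting module, we have $\Ext_S^{i>0}(V^{\otimes d}, V^{\otimes d})=0$; splicing a Hom-exact $\add V^{\otimes d}$-coresolution into short exact sequences and applying $\Hom_S(-, V^{\otimes d})$ inductively gives $\Ext_S^{i>0}(T(2)\otimes \St(t)^F, V^{\otimes d})=0$. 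Thus each short exact sequence in the twisted filtration of $P_d(m)$ remains exact under $\Hom_S(-, V^{\otimes d})$, and iterated application of Lemma \ref{lem:2.3.2}(a) gives $V^{\otimes d}\domdim_S P_d(m)=+\infty$.

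For case (b), I would use the short exact sequence $0\to \St(d)\to P_d(m)\to P_{d-2}(m)\to 0$ supplied by Lemma \ref{lem:5.0.4}. By the paragraph above applied to $P_{d-2}(m)$, we have $V^{\otimes d}\domdim_S P_{d-2}(m)=+\infty$ and $P_{d-2}(m)\in V^{\otimes d}{}^\perp$, so the sequence remains exact under $\Hom_S(-, V^{\otimes d})$. Lemma \ref{lem:2.3.2}(a) then gives $V^{\otimes d}\domdim_S P_d(m)\geq V^{\otimes d}\domdim_S \St(d)$. For the reverse inequality, I would invoke the full statement of \citep[Lemma~3.1.7]{Cr2} (of which Lemma \ref{lem:2.3.2} records two parts) to obtain the symmetric bound $V^{\otimes d}\domdim_S \St(d)\geq \min\{V^{\otimes d}\domdim_S P_d(m),\, V^{\otimes d}\domdim_S P_{d-2}(m)+1\}$, which forces equality once $V^{\otimes d}\domdim_S P_{d-2}(m)=+\infty$; equivalently, a direct horseshoe construction of a Hom-exact $\add V^{\otimes d}$-coresolution of $\St(d)$ from those of $P_d(m)$ and $P_{d-2}(m)$ yields the same conclusion.

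Finally, to deduce the last displayed equations, write $S(2,d)$ as a direct sum of indecomposable projectives $P_d(m)$ for $m$ ranging over the even weights $0 \leq m \leq d$, and apply Corollary \ref{cor:2.3.3}. Each $P_d(m)$ with $m<d$ is covered by case (a) or (b), hence satisfies $V^{\otimes d}\domdim_S P_d(m)\geq V^{\otimes d}\domdim_S \St(d)$, while the top-weight projective $P_d(d)=\St(d)$ realises the value $V^{\otimes d}\domdim_S \St(d)$ itself; so the infimum is $V^{\otimes d}\domdim_S \St(d)$. The second equality is then immediate from Theorem \ref{thm:5.0.2}.

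The main obstacle is the reverse inequality in case (b): namely, ruling out that the Hom-exact $\add V^{\otimes d}$-coresolution of $P_d(m)$ extends strictly beyond the length attained for $\St(d)$. The other steps are essentially bookkeeping, with the heavy lifting having been done in Lemma \ref{lem:5.0.3}, Lemma \ref{lem:5.0.4} and Theorem \ref{thm:5.0.2}.
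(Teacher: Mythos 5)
Your proof follows the route the paper intends (its own proof is just a pointer to Lemma \ref{lem:5.0.4} and Lemma \ref{lem:2.3.2}): case (a) by running Lemma \ref{lem:2.3.2}(a) up the twisted filtration using Lemma \ref{lem:5.0.3}, case (b) by the full version of \citep[Lemma 3.1.7]{Cr2} applied to $0\to\Delta(d)\to P_d(m)\to P_{d-2}(m)\to 0$, and the final display by Corollary \ref{cor:2.3.3} together with Theorem \ref{thm:5.0.2} and the fact that $P_d(d)=\Delta(d)$. This is correct, and your observation that the two recalled parts of Lemma \ref{lem:2.3.2} only yield $V^{\otimes d}\domdim_S P_d(m)\geq V^{\otimes d}\domdim_S\Delta(d)$ in case (b), so that the symmetric estimate from the full lemma (or an equivalent direct construction) is genuinely needed for the equality, is a good catch.

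One justification is off, though it costs you nothing. You derive $\Ext_S^{i>0}(T(2)\otimes\Delta(t)^F, V^{\otimes d})=0$ by splicing a Hom-exact $\add V^{\otimes d}$-coresolution into short exact sequences; but dimension shifting along such a coresolution only gives $\Ext_S^{i}(M,V^{\otimes d})\cong \Ext_S^{i+k}(C_k,V^{\otimes d})$ for the successive cosyzygies $C_k$, which pushes the group into ever higher degrees without ever showing it vanishes — an infinite Hom-exact coresolution does not by itself force this Ext-vanishing. The correct (and shorter) justification is the one used in the proof of Theorem \ref{thm:5.0.2}: $T(2)\otimes\Delta(t)^F$ lies in $\mathcal{F}(\Delta)$ by Proposition \ref{prop:4.3.2}, and $V^{\otimes d}$ is a partial tilting module, so $\Ext_S^{i>0}(T(2)\otimes\Delta(t)^F, V^{\otimes d})=0$; the same remark applies to the exactness of the sequence in case (b), since $P_{d-2}(m)\in\mathcal{F}(\Delta)$. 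With that substitution the argument is complete.
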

\begin{proof}
	This follows directly from Lemma \ref{lem:5.0.4} and Lemma \ref{lem:2.3.2}.
\end{proof}

 This completes the proof of Theorem \ref{thm:2} for algebraically closed fields. By \citep[Lemma 3.2.3]{Cr2}, the result also holds over arbitrary fields.

\subsection{The quantum case}

\begin{Remark}
	If $q$ is not a root of unity then  $S_{K, q}(2, d)$ is semi-simple (\citep[4.3(7)]{Do2}) and $V^{\otimes d}$ being faithful is a characteristic tilting module. Otherwise,  the summands of $V^{\otimes d}$ over $S_{K, q}(2, d)$ are the tilting modules labelled by the $\ell$-regular partitions of $d$ in at most $2$ parts, where $q$ is an $\ell$-root of unity.  Hence, replacing $\characteristic K$ by $\ell$ in Lemma \ref{lem:4.0.1}, we obtain that
	$V^{\otimes d}$ is a characteristic tilting module over $S_{K, q}(2, d)$ if $q+1\neq 0$ or $d$ is odd.
\end{Remark}

For the quantum case, it is enough to take $S=S_{K, q}(2, d)$ where $q+1=0$. In this case, everything is
	exactly the same as over $S(2, d)$ when $\characteristic K=2$. Namely, we may take $S_{K, q}(2, d)$ as $A_q(2, d)^*$ as it is done in \cite{Do2} and \cite{Cox}, and also in \cite{DDo} and
	\cite{Do3}. 	The definition  of  $A_q(2)$ may be found in \citep[p. 16]{DDo}. This means that one takes the quantum group $G(2)$ as defined in \cite{DDo} instead of $SL(2, K)$. 
	
	As it is explained in \citep[Sections 3.1 and 3.2]{EL}, we can use the same labelling for weights, in \cite{EL}; in that paper the parameter $q$ is a primitive $\ell$-th root of $1$ and we only need
	$\ell=2$. 
	We can regard $S_{K, q}(2, d)$ as a factor algebra of $S_{K, q}(2, d+2)$, using \citep[Section 4.2]{Do2}, and therefore regard modules in degree $d$
	again as modules in degree $d'$ for $d'>d$ of the same parity.
	
	There is a Frobenius morphism from the quantum group $G(2)$ to the classical setting, hence if $\Delta(m)$ (resp. $T(m)$)  is a  standard module (resp. a tilting module)
	for the classical setting, then $\Delta(m)^F$ (resp. $T(m)^F$) is a module for the quantum group, and so are the tensor products
	$T(2)\otimes \Delta(m)^F$ and $T(2)\otimes T(m)^F$ modules for the quantum group.  
	The $q$-analogues of the exact sequences in Subsection \ref{Twisted tensor product methods} and Proposition \ref{prop:4.3.2}
	exist  by \citep[Prop. 3.3 and 3.4.]{Cox}. See also \citep[Proposition 3.1]{EL} (our situation of interest is recovered by fixing $l=2$ in their setup). The $q$-analogue of Proposition \ref{prop:4.3.1} can be found in \citep[Section 3.4, page 73, (8)]{Do2}. 
	
	We note that \ref{4dot3dot1} of Subsection \ref{Twisted tensor product methods} and the $q$-analogue imply, by induction, that all decomposition numbers are $0$ or $1$.

	In \cite{DDo} it is shown that this version of 
	the $q$-Schur algebra is the same as our definition, as the endomorphism algebra
	of the action of the Iwahori-Hecke algebra on the tensor space $V^{\otimes d}$, see
	Section \ref{sec6} below. The definition of the Iwahori-Hecke algebra, as we take it is given in \ref{Theqanalogue} below. In particular, we denote by $H$ the Iwahori-Hecke algebra. Their strategy in \citep[Section
	3]{DDo}  is to show that
	the action of the Iwahori-Hecke algebra on the tensor space
	is a comodule homomorphism (see 3.1.6 of \cite{DDo}). The $H$-action
	in \citep[3.1.6]{DDo}
	is not the same as ours, but
	it is explained in detail (see 4.4.3 of \cite{DDo}) that the action we
	use also can be taken.

Hence, the arguments of Section \ref{dominant dimension of the regular module} remain valid in the quantum case and therefore, we obtain the following:

 \begin{Theorem}\label{thm:5.0.7} Let $K$ be a field and fix $q=u^{-2}$ for some $u\in K$. Let $S$ be the $q$-Schur algebra $S_{K, q}(2, d)$ and $T$ be the characteristic tilting module of $S$. Then,
	\begin{align*}
		V^{\otimes d}\domdim_S S=2\cdot V^{\otimes d}\domdim_S T=\begin{cases}
			d, & \text{ if } 1+q=0 \text{ and } d \text{ is even}, \\
			+\infty, &\text{ otherwise }
		\end{cases}.
	\end{align*}
\end{Theorem}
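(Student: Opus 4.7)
The plan is to transport the entire argument of Section~\ref{dominant dimension of the regular module} to the $q$-analogue setting, exploiting that each structural tool has a $q$-version behaving identically. Concretely, $S_{K,q}(2,d)$ admits a simple preserving duality, $V^{\otimes d}$ lies in $\add T$, and the Frobenius twist exact sequences from \ref{4dot3dot1} together with Propositions~\ref{prop:4.3.1} and \ref{prop:4.3.2} have $q$-analogues by \citep[Prop.~3.3, 3.4]{Cox} and \citep[\S3.4, p.~73, (8)]{Do2}; the classification of Ringel-self-dual blocks and the projective module structure carry over through \cite{EL}. The bookkeeping confirming that every structural ingredient of Section~\ref{Input from  Schur algebras} admits a faithful $q$-analogue under the conventions of \cite{DDo,Do2,EL} is essentially the only non-trivial obstacle, and the setup paragraphs preceding the theorem already assemble these references.

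I would first dispose of the easy case: by the Remark preceding the theorem, when $q$ is not a root of unity, or $1+q\neq 0$, or $d$ is odd, the module $V^{\otimes d}$ contains every indecomposable summand of the characteristic tilting module $T$. Lemma~\ref{lem:5.0.1} then immediately yields $V^{\otimes d}\domdim_S S=+\infty$, and also $V^{\otimes d}\domdim_S T=+\infty$, matching both sides of the desired equality.

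For the remaining case $1+q=0$ with $d=2s$ even, the indecomposable summands of $V^{\otimes d}$ are precisely the $T(m)$ with $0<m\leq d$ of the parity of $d$, so $T(0)$ is not a summand. Using the $q$-analogue of Proposition~\ref{prop:4.3.2}, the short exact sequences $0\to \St(2t+2)\to T(2)\otimes \St(t)^F\to \St(2t)\to 0$ remain exact under $\Hom_S(-,V^{\otimes d})$ because $V^{\otimes d}$ is partial tilting. The proof of Lemma~\ref{lem:5.0.3} transfers verbatim to give $V^{\otimes d}\domdim_S(T(2)\otimes \St(t)^F)=+\infty$, and then Lemma~\ref{lem:2.3.2} produces the $q$-analogue of Theorem~\ref{thm:5.0.2}, namely $V^{\otimes d}\domdim_S \St(d)=s+V^{\otimes d}\domdim_S T(0)$. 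The projective module analysis of Lemma~\ref{lem:5.0.4} and Corollary~\ref{cor:5.0.6} likewise carries over, yielding $V^{\otimes d}\domdim_S S=s+V^{\otimes d}\domdim_S T(0)$.

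To extract the numerical value I would then invoke Theorem~\ref{mainresult} with $A=S$ and $Q=V^{\otimes d}\in\add T$, using that $S$ carries a simple preserving duality; this gives $V^{\otimes d}\domdim_S S=2\cdot V^{\otimes d}\domdim_S T$. By Corollary~\ref{cor:2.3.3}, and since $V^{\otimes d}\domdim_S T(m)=+\infty$ for every summand $T(m)$ of $V^{\otimes d}$, one gets $V^{\otimes d}\domdim_S T=V^{\otimes d}\domdim_S T(0)$. Combining this with $V^{\otimes d}\domdim_S S=s+V^{\otimes d}\domdim_S T(0)$ forces $V^{\otimes d}\domdim_S T(0)=s=d/2$, so $V^{\otimes d}\domdim_S S=d$ and $V^{\otimes d}\domdim_S T=d/2$, as claimed. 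Passing from an algebraically closed field to arbitrary fields is routine via \citep[Lemma 3.2.3]{Cr2}.
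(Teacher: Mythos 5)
Your proposal is correct and follows essentially the same route as the paper: dispose of the semisimple/$\ell$-regular cases via Lemma \ref{lem:5.0.1}, transfer the twisted-filtration machinery of Section \ref{dominant dimension of the regular module} to the quantum group setting using the $q$-analogues in \cite{Cox}, \cite{Do2} and \cite{EL} to get $V^{\otimes d}\domdim_S S = \frac{d}{2} + V^{\otimes d}\domdim_S T(0)$, and then combine with Theorem \ref{mainresult} and Corollary \ref{cor:2.3.3} to solve for the value. Your write-up actually makes the final numerical extraction (that $2x = \frac{d}{2} + x$ forces $x = \frac{d}{2}$) more explicit than the paper does, but the argument is the same.
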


\begin{Remark}
	One might want to know for which $m$ it is true that $V^{\otimes d}\domdim_S P_d(m)$ is finite.
 In principle, one can answer this, using the formula in \cite{H} for decomposition numbers. Namely 
	this $V^{\otimes d}-$ dominant dimension  is finite if and only the number of $\Delta$-quotients of $P_d(m)$ is odd, ie the number of $1$s in the
column of $L(m)$.
\end{Remark}

\section{Temperley-Lieb algebras}\label{sec6}

These algebras were introduced as a model for statistical mechanics (\citep{zbMATH03335816}), and then became
popular through the work of Jones. In particular, he discovered that they occur as quotients of
Iwahori-Hecke algebras (\citep{zbMATH03802219, zbMATH03899758}). See also \cite{zbMATH00795321} for further details.
We give the definition and discuss the connections with Schur algebras.

\begin{Def}  Let $R$ be a commutative ring and $\delta$ an element of $R$. 
	The \emph{Temperley-Lieb algebra}
	$TL_{R,d}(\delta)$ over $R$ is the $R$-algebra generated by elements $U_1, U_2, \ldots, U_{d-1}$ with defining relations, here $1\leq i, j\leq d-1$
	such that each term is defined:
	\begin{enumerate}[(a)]
		\item $U_iU_j=U_jU_i$ ( $|i-j|>1$),
		\item  $U_i^2 = \delta U_i$,
			\item $U_{i}U_{i+1}U_i = U_i$, $1\leq i\leq n-2,$
			\item  $U_{i}U_{i- 1}U_i = U_i$, $2\leq i\leq n-1.$
	\end{enumerate}
\end{Def}

It can be viewed as a diagram algebra, with a very 
extensive  literature, but we will not give details since we do 
use diagram calculations.

\subsection{The classical case} We will start by considering the class of Temperley-Lieb algebras which can be viewed as quotients of group algebras of the symmetric group.

\begin{Lemma}\label{lem:6.1.1}
	There is a surjective algebra homomorphism $\Phi: R\cS_d \to TL_{R,d}(-2)$ taking the generator $T_i = (i \ i+1)$ of $\cS_d$ 
	to $U_i+1$ for $1\leq i\leq d-1$.
\end{Lemma}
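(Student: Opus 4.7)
The plan is to exploit the Coxeter presentation of the symmetric group: $\cS_d$ is generated by the simple transpositions $T_1,\dots,T_{d-1}$ subject to $T_i^2=1$, $T_iT_j=T_jT_i$ for $|i-j|>1$, and the braid relation $T_iT_{i+1}T_i=T_{i+1}T_iT_{i+1}$. By the universal property of the group algebra $R\cS_d$, to construct $\Phi$ it suffices to produce, inside $TL_{R,d}(-2)$, invertible elements $u_i:=U_i+1$ that satisfy these three relations; the map on the generators then extends uniquely to an $R$-algebra homomorphism.

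I would verify each relation in turn, using only the Temperley-Lieb relations with $\delta=-2$. First, $u_i^2=(U_i+1)^2=U_i^2+2U_i+1=-2U_i+2U_i+1=1$, so the $u_i$ are involutions (hence automatically units). Second, for $|i-j|>1$ the relation $U_iU_j=U_jU_i$ immediately yields $u_iu_j=u_ju_i$. Third, for the braid relation, I would expand both sides and then simplify via $U_iU_{i+1}U_i=U_i$, $U_{i+1}U_iU_{i+1}=U_{i+1}$, and $U_i^2=-2U_i$. A direct expansion gives
\begin{align*}
u_iu_{i+1}u_i &= U_iU_{i+1}U_i+U_iU_{i+1}+U_i^2+U_{i+1}U_i+2U_i+U_{i+1}+1\\
&= U_i+U_iU_{i+1}+U_{i+1}U_i+U_{i+1}+1,
\end{align*}
and symmetrically $u_{i+1}u_iu_{i+1}=U_{i+1}+U_iU_{i+1}+U_{i+1}U_i+U_i+1$, which coincide.

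For surjectivity, I would observe that $\Phi(T_i-e)=u_i-1=U_i$ where $e\in\cS_d$ is the identity, so each Temperley-Lieb generator lies in the image of $\Phi$; since $TL_{R,d}(-2)$ is generated as an $R$-algebra by $U_1,\dots,U_{d-1}$, the map is surjective.

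There is no real obstacle here: the only calculation requiring any care is the braid relation, where the quadratic term $U_i^2$ has to be rewritten via $U_i^2=-2U_i$ and this is precisely the point at which the specific value $\delta=-2$ is used. All other relations are either immediate or follow symbolically from the Temperley-Lieb presentation.
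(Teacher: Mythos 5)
Your proof is correct and follows essentially the same route as the paper: both use the Coxeter presentation of $\cS_d$ to check that the elements $U_i+1$ satisfy the defining relations (the paper merely asserts this verification is straightforward, whereas you carry out the braid-relation computation explicitly, correctly using $U_i^2=-2U_i$), and both deduce surjectivity from $U_i=\Phi(T_i-T_i^2)=\Phi(T_i-e)$.
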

\begin{proof}
	Recall that the group algebra $R\cS_d$ is generated by the $T_i$ subject to the relations
	\begin{enumerate}[(a)]
		\item $T_iT_{i+1}T_i = T_{i+1}T_iT_{i+1}$,
		\item $T_iT_j = T_jT_i$ \ ($|i-j| > 1$),
		\item $T_i^2=1$,
	\end{enumerate}
	for $1\leq i, j \leq d-1$ such that each factor is defined.
	To show that the map is well-defined one has to check that it preserves these relations; this is straightforward.
	It is clear that $\Phi$ is surjective, noting that $U_i = \Phi(T_i-T_i^2)$.
\end{proof}

The following description of the kernel of $\Phi$ goes back to [Jon87, p. 364].

\begin{Theorem}\label{thm:6.1.2} For  each $i=1, 2, \ldots, d-2$ define
	$$x_i:= T_iT_{i+1}T_i - T_iT_{i+1} - T_{i+1}T_i + T_i + T_{i+1} -1 \in R\cS_d.
	$$ Let $I$ be the ideal of
	$R\cS_d$ generated by the $x_i$ for $1\leq i\leq d-2$. Then there
	is an exact sequence
	$$0\to I \to R\cS_d \stackrel{\Phi}\to TL_{R,d}(-2) \to 0
	$$
\end{Theorem}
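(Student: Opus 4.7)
The plan is to show that $I\subseteq \ker\Phi$ by a direct expansion, and then to invert the induced surjection $\bar{\Phi}\colon R\cS_d/I\twoheadrightarrow TL_{R,d}(-2)$ by constructing a map in the opposite direction using the defining presentations.

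For the inclusion $I\subseteq \ker\Phi$, I would simply expand
$\Phi(x_i) = (U_i+1)(U_{i+1}+1)(U_i+1) - (U_i+1)(U_{i+1}+1) - (U_{i+1}+1)(U_i+1) + (U_i+1) + (U_{i+1}+1) - 1$
and apply the Temperley--Lieb relations $U_i^2 = -2U_i$ and $U_iU_{i+1}U_i = U_i$ to see that everything cancels. This gives a well-defined surjection $\bar{\Phi}\colon R\cS_d/I\to TL_{R,d}(-2)$, so it remains only to construct a map $\Psi\colon TL_{R,d}(-2)\to R\cS_d/I$ with $\Psi(U_i) = T_i-1+I$ and check that it is a two-sided inverse of $\bar\Phi$ on generators.

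The substantive step is showing that $\Psi$ is well defined, i.e.\ that the elements $u_i:=T_i-1\in R\cS_d$ satisfy the Temperley--Lieb relations modulo $I$. The far-commutation $u_iu_j=u_ju_i$ for $|i-j|>1$ and the quadratic relation $u_i^2 = T_i^2-2T_i+1 = 2-2T_i = -2u_i$ both hold already in $R\cS_d$ because $T_i^2=1$. For the three-term relation $u_iu_{i+1}u_i = u_i$, a direct expansion gives
\begin{equation*}
u_iu_{i+1}u_i - u_i = T_iT_{i+1}T_i - T_iT_{i+1} - T_{i+1}T_i + T_i + T_{i+1} - 1 = x_i,
\end{equation*}
which is zero in $R\cS_d/I$. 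The companion relation $u_{i+1}u_iu_{i+1}=u_{i+1}$ is the same computation with indices swapped, and it reduces to $x_i$ after applying the braid relation $T_iT_{i+1}T_i = T_{i+1}T_iT_{i+1}$ (which is available in $R\cS_d$).

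Having established well-definedness of $\Psi$, the composites $\bar\Phi\Psi$ and $\Psi\bar\Phi$ are easily seen to be identities on the generators $U_i$ and $T_i+I$ respectively, so $\bar\Phi$ is an isomorphism and $\ker\Phi = I$. The only mildly subtle point I anticipate is recognising that both of the ``three-term'' Temperley--Lieb relations are captured simultaneously by the single family $\{x_i\}$ via the symmetry provided by the braid relation; everything else is routine expansion, and no assumption on $R$ beyond commutativity is needed because all coefficients appearing are already integers.
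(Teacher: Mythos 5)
Your proposal is correct and follows essentially the same route as the paper: both show $I\subseteq\ker\Phi$, then invert the induced surjection $R\cS_d/I\to TL_{R,d}(-2)$ by sending $U_i$ to $T_i-1+I$ and verifying the Temperley--Lieb relations modulo $I$. Your writeup actually spells out the key identity $u_iu_{i+1}u_i-u_i=x_i$ and the role of the braid relation for the companion relation, details the paper leaves to the reader.
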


\begin{proof} One checks that $\Phi(x_i)=0$ for each $i=1, \ldots, d-2$. So we have a commutative diagram
	$$\CD 0@>>> {\rm ker} \Phi @>>> R\cS_d  @>{\Phi}>> TL_{R, d}(-2) @>>> 0\cr
	&&@A{\iota}AA    @A{id_{R\cS_d}}AA @A{\pi}AA \cr
	0  @>>> I @>>> R\cS_d @>>> R\cS_d/I  @>>> 0
	\endCD \ ,$$
	where $\pi$ maps the image of $T_i$ in $RS_d/I$ to $\Phi(T_i) = U_i+1$, and $\iota$ is the inclusion map.
	Consider $\pi': TL_{R, d}(-2) \to R\cS_d/I$ defined by taking $U_i$ to the image of $T_i-1$ in $R\cS_d/I$. One checks that
	$\pi'$ preserves the defining relations for  $TL_{R, d}(-2)$, so that it is a well-defined map.
	Finally,
	$$\pi'(\pi(T_i + I)) = T_i+I, \ \ \pi(\pi'(U_i)) = U_i.
	$$
	Therefore ${\rm ker} \Phi = I$.
\end{proof}

It is nowadays widely known that Temperley-Lieb algebras can be viewed as the centraliser algebras of quantum groups $\mathfrak{sl}_2$ in the endomorphism algebra of a tensor power and it goes back to the work of Martin \cite{zbMATH01235858} and Jimbo \cite{zbMATH03970994}. 
Recall that over $R$, the Schur algebra $S_R(2, d)$ is defined as the endomorphism algebra $\End_{R\cS_d}((R^2)^{\otimes d})$, where $(R^2)^{\otimes d}$ affords a right $R\cS_d$-module structure via place permutation.
In order to relate the
Temperley-Lieb algebra to the Schur algebra, we need a suitable action of the Temperley-Lieb algebra on the tensor space.
It is as follows.

\begin{Theorem}\label{thm:6.1.3} Let $V$ be a free $R$-module of rank $2$.
	Then $V^{\otimes d}$ is a module over $\La= TL_{R, d}(-2)$ where $U_i$ acts as ${\rm id}_V^{\otimes(i-1)}\otimes \tau \otimes {\rm id}_V^{\otimes (d-i-1)}$. Here
	$\tau$ is the endomorphism of $V^{\otimes 2}$ defined by 
	$$\tau(v_1\otimes v_2) = v_2\otimes v_1 - v_1\otimes v_2$$
	(for $v_1, v_2\in V$). Moreover there is an algebra isomorphism
	$$\La \to {\rm End}_{S_R(2, d)}(V^{\otimes d})^{op}.$$
\end{Theorem}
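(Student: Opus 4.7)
The plan is to construct the desired $\Psi$ by routing the Temperley--Lieb action through $R\cS_d$, observing that it automatically commutes with the centraliser, and finishing by a rank plus base change argument. For well-definedness of the action on $V^{\otimes d}$, set $\tau_i := \operatorname{id}^{\otimes(i-1)}\otimes \tau \otimes \operatorname{id}^{\otimes(d-i-1)}$ and note that $\tau$ is the swap of $V\otimes V$ minus its identity; consequently the place permutation representation $\rho\colon R\cS_d\to \operatorname{End}_R(V^{\otimes d})$ sends $T_i$ to $\tau_i + \operatorname{id}$, which exactly matches $\Phi(T_i)=U_i+1$ from Lemma \ref{lem:6.1.1} under the tentative assignment $U_i\mapsto \tau_i$. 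Hence $\rho$ factors through $\Phi$ to give an algebra homomorphism $\psi\colon \Lambda = TL_{R,d}(-2) \to \operatorname{End}_R(V^{\otimes d})$ with $\psi(U_i) = \tau_i$ provided $\rho$ annihilates the ideal $I$ of Theorem \ref{thm:6.1.2}. Since each generator $x_i$ of $I$ involves only three consecutive positions, it suffices to verify $\rho(x_1) = 0$ on $V^{\otimes 3}$, which is a direct computation on the eight basis tensors.

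For the centraliser property, $S_R(2, d) = \operatorname{End}_{R\cS_d}(V^{\otimes d})$ consists by definition of $R\cS_d$-linear endomorphisms, hence its elements commute with the image of $\rho$ and therefore with the image of $\psi$; this lifts $\psi$ to the desired algebra homomorphism $\Psi\colon \Lambda \to \operatorname{End}_{S_R(2, d)}(V^{\otimes d})^{op}$. For bijectivity, both sides are finitely generated free $R$-modules whose formation commutes with base change to residue fields of $R$ (the left via the Jones diagram basis of cardinality $C_d$; the right because $V^{\otimes d}$ is a finitely generated projective -- in fact tilting -- $S_R(2, d)$-module with constant endomorphism rank across fibres). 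It therefore suffices to prove $\Psi$ is bijective when $R$ is a field $K$, in which case both sides have $K$-dimension $C_d$ (the right by the classical decomposition of $V^{\otimes d}$ as a tilting $S_K(2, d)$-module), so only injectivity of $\Psi$ remains.

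The main obstacle is this injectivity step uniformly in $R$. Over a characteristic-zero field, injectivity is classical Schur--Weyl duality: the image of $\rho$ coincides with $\operatorname{End}_{S_K(2, d)}(V^{\otimes d})^{op}$ and the kernel of $\rho$ is identified with $I$ via Theorem \ref{thm:6.1.2}. The integral version requires ruling out unexpected relations at small residue characteristics; the cleanest route is to prove $\Psi$ injective directly over $\mathbb{Z}$ by evaluating the images of the Jones basis on prescribed basis tensors of $V^{\otimes d}$ and observing $\mathbb{Z}$-linear independence, after which base change delivers injectivity over every $R$ and the matching ranks then force $\Psi$ to be an isomorphism.
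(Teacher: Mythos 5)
Your construction of the action and of the homomorphism $\Psi$ matches the paper: the paper likewise observes that $T_i-1$ acts as $\tau_i$, so the place permutation action factors through $\Phi$, and that $S_R(2,d)$-linearity is automatic. Where you diverge is the bijectivity of $\Psi$. The paper proves surjectivity directly, by reducing to surjectivity of $R\cS_d\to\End_{S_R(2,d)}(V^{\otimes d})^{op}$, which it gets from classical Schur--Weyl duality (factoring through $\End_{S(d,d)}((K^d)^{\otimes d})^{op}$, using $\dd S_K(d,d)\ge 2$) over residue fields and then Nakayama's lemma; injectivity is then checked separately. You instead propose a rank count ($C_d$ on both sides, compatible with base change) plus an injectivity argument over $\mathbb{Z}$. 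This is a legitimate alternative strategy in principle, but as written it has genuine gaps.

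First, the heart of the theorem in your approach --- faithfulness of the tensor space representation --- is not proved but only announced (``evaluating the images of the Jones basis \ldots and observing $\mathbb{Z}$-linear independence''). Worse, the statement you propose to verify is too weak for your own reduction: $\mathbb{Z}$-linear independence of the images does \emph{not} survive base change to residue fields of positive characteristic (consider multiplication by $p$ on $\mathbb{Z}$), so it cannot ``deliver injectivity over every $R$.'' What you actually need is that the $C_d\times C_d$ evaluation matrix is unimodular (e.g.\ unitriangular for a suitable ordering of diagrams and basis tensors); that is true and is the standard argument, but it is precisely the computation you have omitted. Second, your rank count presupposes that the abstractly presented algebra $TL_{R,d}(-2)$ is free of rank exactly $C_d$; the usual proof that the diagram basis is linearly independent in the presented algebra itself exhibits a faithful module, so quoting it here risks circularity unless you invoke the diagram-algebra realisation independently (which the paper deliberately avoids). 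Third, two smaller inaccuracies: $V^{\otimes d}$ is a tilting, not a projective, $S_R(2,d)$-module (the base-change property of its endomorphism ring comes from the $\Ext$-vanishing for tilting modules, as in the paper's citation of [Cr1, A.4.6]); and in characteristic $2$ the identity $\dim_K\End_{S_K(2,d)}(V^{\otimes d})=C_d$ is not ``the classical decomposition'' but follows from $\dim\Hom(X,Y)=\sum_\nu[X:\Delta(\nu)]\,[Y:\nabla(\nu)]$ together with the characteristic-free $\Delta$-filtration multiplicities of tensor space. Repairing the injectivity step (unimodularity of the evaluation matrix, or the paper's route via surjectivity of $R\cS_d\to\End_{S_R(2,d)}(V^{\otimes d})^{op}$ plus Nakayama) is essential before the proposal can be accepted.
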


\begin{proof}   We know that $R\cS_d$ acts by place permutations on $V^{\otimes d}$ and we can view this  as a right action. 
	With this, $T_i- 1$ acts exactly as the action of $U_i$ as in the statement. This shows that it factors through $\La$. In particular, to show that $\Lambda\rightarrow {\rm End}_{S(2, d)}(V^{\otimes d})^{op}$ is surjective it is enough to check that the canonical map $RS_d\rightarrow {\rm End}_{S(2, d)}(V^{\otimes d})^{op}$ is surjective. But this follows from classical Schur--Weyl duality (see for example \cite{KSX}). In fact, this can be seen in the following way: let $K$ be a field, then the canonical map $K\cS_d\rightarrow \End_{S(2, d)}(V^{\otimes d})^{op}$ fits in the following commutative diagram
	\begin{equation*}
		\begin{tikzcd}
			K\cS_d \arrow[rr] \arrow[dr, "\psi"]&  &\End_{S(2, d)}(V^{\otimes d})^{op}
			\\
			& \End_{S(d, d)}((K^d)^{\otimes d})^{op} \arrow[ur, "\phi"] & 
		\end{tikzcd}
	\end{equation*}
	Here, $\psi$ is surjective because $\dd S_K(d, d)\geq 2$ and $\phi$ is surjective by \citep[1.7]{E1} and \citep[4.7]{Do2} because $(K^d)^{\otimes d}$ is a projective-injective module over $S(d, d)$.  Observe that $\End_{S_R(2, d)}(V^{\otimes d})^{op}\in R\proj$ (see for example \citep[Proposition A.4.3., Corollary A.4.4.]{Cr1}) and it has a base change property (see for example \citep[Corollary A.4.6]{Cr1}). In particular, $R(\mi)\otimes_R \End_{S_R(2, d)}(V^{\otimes d})^{op}\simeq \End_{S_{R(\mi)}(2, d)}(V^{\otimes d})^{op}$ for every maximal ideal $\mi$ of $R$. By the above discussion, the maps $R(\mi)S_d\rightarrow \End_{S_{R(\mi)}(2, d)}(V^{\otimes d})^{op}$ are surjective for every maximal ideal $\mi$ of $R$.  Now, by Nakayama's Lemma the map $RS_d\rightarrow \End_{S_R(2, d)}(V^{\otimes d})^{op}$ is surjective.

	It remains to show that the action of $\La$ is injective. 
	Let $\sum_i a_iU_i \in \La$ acting as zero on $V^{\otimes d}$. The action of $\sum_i a_iU_i $ in $y_k$, defined as the basis element $e_1\otimes \ldots \otimes e_1\otimes e_2\otimes e_1\otimes \ldots \otimes e_1$ where $e_2$ appears in position $k+1$, yields that $a_k=a_{k+1}=0$. This concludes the proof.
\end{proof}

\subsection{The $q$-analogue}\label{Theqanalogue}We shall now discuss the general case of Theorem \ref{thm:6.1.3} and  its importance for all Temperley-Lieb algebras.

Let $R$ be a commutative Noetherian ring with an invertible element $u\in R$. We fix a natural number $d$, and we set
$q:= u^{-2}$.
We take the \emph{Iwahori-Hecke algebra} $H=H_{R, q}(d)$ to be the $R$-algebra with basis $\{  \wT_{w}\mid  w\in \cS_d\}$ with relations
$$\wT_w\wT_s = \left\{\begin{array}{ll} \wT_{ws} & \mbox{if } l(ws) = l(w) + 1\cr
	(u-u^{-1})\wT_w + \wT_{ws} & \mbox{ otherwise.}
\end{array}
\right.
$$
here $s$ runs through the set of transpositions $S=\{ (i \ i+1)\mid 1\leq i < d\}$ in $\cS_d$, and where $l(w)$ is the usual length for $w\in \cS_d$, that
is the minimal number of transpositions needed in a factorisation of $w$.

This presentation corresponds to the presentation used in \cite{DJ1, DJ2}, \cite{DDo}, \cite{Do2} by
$$\wT_w = (-u)^{l(w)}T_w.$$
The algebra $H$ can also be defined by
the braid relations, together with
$\wT_s^2 = (u-u^{-1})\wT_s + 1$, that is
$$(\wT_s-u)(\wT_s+u^{-1}) = 0 \ \ (s\in S).
$$

\begin{Lemma} \label{lem:6.2.1} Let $\delta = -u-u^{-1}$. Then 
	there is a surjective algebra homomorphism 
	$$\Phi: H_{R, q}(d)  \to TL_{R,d}(\delta)$$ 
	taking the generator $\wT_i:=\wT_{(i \ i+1)}$ to $U_i+ u$ for $1\leq i \leq d-1$.
\end{Lemma}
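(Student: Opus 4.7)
The plan is to follow exactly the same blueprint as in the proof of Lemma \ref{lem:6.1.1}, promoted to the Iwahori–Hecke setting. The Iwahori–Hecke algebra $H_{R,q}(d)$ is presented by the generators $\wT_s$ ($s\in S$) subject to the braid relations together with the quadratic relation $(\wT_s-u)(\wT_s+u^{-1})=0$, i.e.\ $\wT_s^2=(u-u^{-1})\wT_s+1$. So to see that $\Phi$ is a well-defined algebra map it is enough to verify that the proposed images $\Phi(\wT_i)=U_i+u$ satisfy these three families of relations in $TL_{R,d}(\delta)$ with $\delta=-u-u^{-1}$.

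First I would check the quadratic relation, which is the only place where $\delta$ enters. Expanding $(U_i+u)^2=U_i^2+2uU_i+u^2=\delta U_i+2uU_i+u^2=(u-u^{-1})U_i+u^2$, and comparing with $(u-u^{-1})(U_i+u)+1=(u-u^{-1})U_i+(u^2-1)+1$, one sees these agree precisely because $\delta=-u-u^{-1}$; this is the calculation that fixes the value of $\delta$. Next I would verify the braid relation $(U_i+u)(U_{i+1}+u)(U_i+u)=(U_{i+1}+u)(U_i+u)(U_{i+1}+u)$ by direct expansion: the $U_iU_{i+1}U_i$ term reduces via (c) of the Temperley–Lieb relations to $U_i$, and symmetrically for the right-hand side; after using the quadratic relation to eliminate the $U_i^2$ and $U_{i+1}^2$ terms, both sides collapse to the same symmetric expression $u\,U_iU_{i+1}+u\,U_{i+1}U_i+u^2U_i+u^2U_{i+1}+u^3$. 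The far-commutation relation $\wT_i\wT_j=\wT_j\wT_i$ for $|i-j|>1$ is immediate from relation (a) of $TL_{R,d}(\delta)$, since the added scalar $u$ does not interfere with commutation.

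Having verified that $\Phi$ respects the defining relations, it extends to an algebra homomorphism $H_{R,q}(d)\to TL_{R,d}(\delta)$. Surjectivity is then completely formal: since $u\in R^\times$, we have $U_i=\Phi(\wT_i)-u=\Phi(\wT_i-u\cdot 1)$ for every $1\leq i\leq d-1$, so every generator of $TL_{R,d}(\delta)$ lies in the image of $\Phi$.

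No genuinely hard step is expected here: the whole content is the $\delta=-u-u^{-1}$ normalization that makes the quadratic relation work out, and after that the braid relation is a short symmetric expansion. The only small caveat I would flag is keeping the sign conventions consistent with the chosen basis $\wT_w=(-u)^{\ell(w)}T_w$ from \cite{DJ1,DJ2}, so that the specialisation $u=1$ (hence $q=1$, $\delta=-2$) recovers Lemma \ref{lem:6.1.1} with $T_i\mapsto U_i+1$ rather than $U_i-1$.
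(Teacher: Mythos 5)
Your proposal is correct and follows essentially the same route as the paper: verify the quadratic and braid relations for $U_i+u$ and note surjectivity from $U_i=\Phi(\wT_i-u)$. The only cosmetic difference is that the paper checks the quadratic relation in factored form, computing $\Phi(\wT_i+u^{-1})\Phi(\wT_i-u)=(U_i-\delta)U_i=0$, whereas you expand $(U_i+u)^2$ directly — the same calculation.
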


\begin{proof} We must show that $\Phi$ is well-defined, that is
	it respects the relations of the Hecke algebra.
	It is clearly surjective.
	We work with the presentation via 
	the braid relations, together with
	$(\wT_i-u)(\wT_i+u^{-1}) = 0$ (for $1\leq i < d-1$).
	With the definition given, $\Phi(\wT_i + u^{-1}) = U_i + u+ u^{-1} = U_i-\delta$ and $\Phi(\wT_i-u) = U_i$. 
	Hence we have  $\Phi(\wT_i + u^{-1})\Phi(\wT_i - u) = (U_i-\delta)U_i = 0$.
	
	To check the braid relations, we compute
	$$(U_i+u)(U_{i+1}+u)(U_i+u) = U_i + (u\delta)U_i + u(U_iU_{i+1} + U_{i+1}U_i) 
	+ 2u^2U_{i} + u^2U_{i+1} + u^3
	$$
	The coefficient of $U_i$ is equal to $u^2$. With this, the expression is
	symmetric in $i, i+1$ and is therefore equal to
	$(U_{i+1}+u)(U_i+u)(U_{i+1}+u)$.
\end{proof}

We want to  determine the kernel of $\Phi$.

\begin{Theorem} \label{thm:6.2.2} For  each $i=1, 2, \ldots, d-2$ define
	$$x_i:= \wT_i\wT_{i+1}\wT_i -u\wT_i\wT_{i+1} - u\wT_{i+1}\wT_i + u^2\wT_i + u^2\wT_{i+1} -u^3 \in H.
	$$ Let $I$ be the ideal of
	$H_{R, q}(d)$ generated by the $x_i$ for $1\leq i\leq d-2$. Fix $\delta = -u-u^{-1}$, then there
	is an exact sequence
	$$0\to I \to H_{R, q}(d)  \stackrel{\Phi}\to TL_{R,d}(\delta) \to 0.
	$$
\end{Theorem}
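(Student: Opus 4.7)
The strategy mirrors the proof of Theorem \ref{thm:6.1.2}, adapted to the $q$-setting. The plan is to exhibit a two-sided inverse to the induced map $\bar{\Phi}\colon H_{R,q}(d)/I \to TL_{R,d}(\delta)$ by constructing a map in the opposite direction that sends the Temperley-Lieb generator $U_i$ to the coset of $\wT_i - u$. Assembling this into a commutative square of short exact sequences, as was done in the classical case, will identify $\ker \Phi$ with $I$.

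\medskip

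First, I would verify that $\Phi(x_i) = 0$ for every $1 \le i \le d-2$, so that $I \subseteq \ker \Phi$. Using $\Phi(\wT_i) = U_i + u$, one expands $\Phi(x_i)$ and collects terms. Relying on the identity $(U_i+u)(U_{i+1}+u)(U_i+u) = U_i + u\delta\, U_i + u(U_iU_{i+1}+U_{i+1}U_i) + 2u^2 U_i + u^2 U_{i+1} + u^3$ that was already established inside the proof of Lemma \ref{lem:6.2.1}, and using $u\delta = -u^2 - 1$, the coefficients of $U_i$, $U_{i+1}$, $U_iU_{i+1}$, $U_{i+1}U_i$ and of the constant term collapse to zero. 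This is a routine bookkeeping computation.

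\medskip

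Next, I would construct $\pi' \colon TL_{R,d}(\delta) \to H_{R,q}(d)/I$ sending each $U_i$ to the class of $\tilde{U}_i := \wT_i - u$, and check that this assignment respects the Temperley-Lieb relations. Relations (a) for $|i-j|>1$ are immediate from the corresponding commutation relations in $H$. For (b), the quadratic relation $\wT_i^2 = (u-u^{-1})\wT_i + 1$ gives $\tilde{U}_i^2 = (-u-u^{-1})\wT_i + 1 + u^2 = \delta \wT_i + u^2 + 1 = \delta \tilde{U}_i$, using once more that $u\delta = -u^2 - 1$. For (c) (and analogously (d)) I would expand $(\wT_i-u)(\wT_{i+1}-u)(\wT_i-u)$ and use the congruence $\wT_i\wT_{i+1}\wT_i \equiv u\wT_i\wT_{i+1} + u\wT_{i+1}\wT_i - u^2\wT_i - u^2\wT_{i+1} + u^3 \pmod{I}$ provided by $x_i \in I$, after which an application of $\wT_i^2 = (u-u^{-1})\wT_i + 1$ reduces the expression to $\wT_i - u = \tilde{U}_i$, as required. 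This verification is the main computation and is where most care is needed: since $\delta$ depends on $u$ and we must stay inside $H/I$ rather than $H$, one must be consistent about when the relation $x_i = 0$ is invoked.

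\medskip

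Finally, I would close the argument exactly as in the classical case: form the diagram with rows $0 \to \ker\Phi \to H \to TL_{R,d}(\delta) \to 0$ and $0 \to I \to H \to H/I \to 0$. Since $\pi'$ and $\bar\Phi$ satisfy $\pi'(\bar\Phi(\wT_i + I)) = \pi'(U_i + u) = \wT_i + I$ and $\bar\Phi(\pi'(U_i)) = \Phi(\wT_i) - u = U_i$, they are mutually inverse on generating sets and hence mutually inverse everywhere. Therefore the inclusion $I \hookrightarrow \ker\Phi$ is an equality, which gives the desired short exact sequence. The only non-routine step is the braid-relation-like check of (c), so I would expect that to be the principal obstacle.
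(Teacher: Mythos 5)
Your proposal is correct and follows essentially the same route as the paper: verify $\Phi(x_i)=0$ so that $I\subseteq\ker\Phi$, then define the inverse map $U_i\mapsto \wT_i-u+I$, check the Temperley--Lieb relations (with the braid relation and $x_{i-1}$ handling relation (d)), and conclude via the mutually inverse maps as in Theorem \ref{thm:6.1.2}. The computations you sketch (e.g.\ $\tilde U_i^2=\delta\tilde U_i$ via $u\delta=-u^2-1$, and $(\wT_i-u)(\wT_{i+1}-u)(\wT_i-u)=x_i+u^2\wT_i-u\wT_i^2\equiv \wT_i-u$) all check out.
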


\begin{proof}  From the  proof of Lemma \ref{lem:6.2.1} we see that
	$$\Phi(\wT_i)\Phi(\wT_{i+1})\Phi(\wT_i) =  u^2(U_i+U_{i+1}) + u(U_{i+1}U_i + U_iU_{i+1}) + u^3$$
	and with this one gets that $\Phi(x_i) = 0$ for all $i$. Hence
	$I\subseteq {\rm ker}(\Phi)$. 
	Analogously to the proof of Theorem 
	\ref{thm:6.1.2} replacing the map $\pi'$ with the map $TL_{R, d}(\delta)\rightarrow H_{R, q}(d)$ defined by taking $U_i$ to the image of $\wT_i-u$ in $H_{R, q}(d)/I$ one proves equality.
\end{proof}

Let $V^{\otimes d}$ be the free $R$-module of rank $n$ over $R$ (later we will take $n=2$). Then $V^{\otimes d}$ is a right
$H$-module, which can be thought of as a deformation of the place permutation action of $\cS_d$. Denote by $I(n, d)$ the set of maps $\{1, \ldots, d\}\rightarrow \{1, \ldots, n\}$ and by $i_j$ the image $i(j)$. If ${\bf i} \in I(n,d)$  labels the basis element
$e_{\bf{i}} = e_{i_1}\otimes e_{i_2}\otimes \ldots \otimes e_{i_d}$ of $V^{\otimes d}$ and $s=(t \ t+1)\in S$ we write $e_{\bf i}\cdot s$ for the basis element obtained by interchanging $e_{i_t}$ and $e_{i_{t+1}}$. Then
$$e_{\bf{i}} \cdot \wT_s:= \left\{\begin{array}{ll} 
	e_{\bf{i}}\cdot s & \ \  i_t < i_{t+1} \cr
	ue_{\bf{i}} & \ \ i_t = i_{t+1} \cr
	(u-u^{-1})e_{\bf{i}} + e_{\bf{i}}\cdot s &  \ \ i_t > i_{t+1}
\end{array}
\right.
$$
Focussing on the TL algebra, we take $n=2$.
Recall that the $q$-Schur algebra $S_q(2, d)$ is the endomorphism algebra
${\rm End}_H(V^{\otimes d})$ via the action as above.

\begin{Theorem}\label{thm:6.2.3} The $H$-module structure on  $V^{\otimes d}$ factors
	through 
	$\Phi: H \to \La= TL_{R, d}(\delta),$ where $\delta = -u-u^{-1}$. Hence $U_s$ acts as
	$$e_{\bf i} U_s:= \left\{\begin{array}{ll} e_{{\bf i}s}- ue_{\bf i} & i_t< i_{t+1} \cr
		ue_{\bf i} - ue_{\bf i} & i_t=i_{t+1}\cr
		(-u)^{-1}e_{\bf i} + e_{{\bf i}s} & i_t> i_{t+1}
	\end{array}
	\right.$$
	where $s=(t \ t+1)\in S$. 
	Moreover there is an algebra isomorphism
	$$\La \to {\rm End}_{S_{R, q}(2, d)}(V^{\otimes d})^{op}.$$
\end{Theorem}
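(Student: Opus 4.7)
The plan is to run the three parts of the statement in sequence, essentially as a quantum adaptation of Theorem~\ref{thm:6.1.3}, with the action formulas forced by $\Phi$.

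\emph{Action formulas and factorisation.} Since $\Phi(\wT_s) = U_s + u$, any $\Lambda$-action on $V^{\otimes d}$ compatible with $\Phi$ must satisfy $e_{\bf i} U_s = e_{\bf i} \wT_s - u e_{\bf i}$; substituting the three branches of the $H$-action defined just before the theorem yields exactly the piecewise formula claimed (with the middle branch collapsing to $0$). The real content is that the $H$-action factors through $\Phi$, i.e.\ the ideal $I$ of Theorem~\ref{thm:6.2.2} annihilates $V^{\otimes d}$. Rather than evaluating each $x_i$ directly, I would check on basis vectors $e_{\bf i}$ that the operators $U_s$ defined by the three-case formula satisfy the Temperley-Lieb relations: $U_s^2 = \delta U_s$, commutation for distant $s$, and $U_s U_{s'} U_s = U_s$ for adjacent $s, s'$. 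Each one becomes a finite case check split by the orderings of the relevant indices $i_t, i_{t+1}$ (and $i_{t+2}$ for the braid-type relation).

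\emph{Surjectivity of $\Lambda \to \End_{S_{R,q}(2,d)}(V^{\otimes d})^{op}$.} Since the $H$-action has been factored through $\Lambda$, it suffices to show that $H \to \End_{S_{R,q}(2,d)}(V^{\otimes d})^{op}$ is surjective. I would transport the classical triangle: over a field $K$, factor this map through $H \to \End_{S_{K,q}(d,d)}((K^d)^{\otimes d})^{op}$, which is surjective by the $q$-analogue of Schur--Weyl duality (equivalently, $\dd S_{K,q}(d,d) \geq 2$), followed by the surjection onto $\End_{S_{K,q}(2,d)}(V^{\otimes d})^{op}$ that exists because $(K^d)^{\otimes d}$ is a faithful projective-injective module over $S_{K,q}(d,d)$ (quantum analogues of \citep[1.7]{E1} and \citep[4.7]{Do2}). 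The base change property of $\End_{S_{R,q}(2,d)}(V^{\otimes d})^{op}$ together with Nakayama's lemma then lifts surjectivity fibrewise to the general Noetherian $R$, exactly as in the classical case.

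\emph{Injectivity, and the main obstacle.} For injectivity I would again reduce to the case of a field, where both sides are finitely generated projective of the same Catalan number rank, so that surjectivity together with fibrewise injectivity forces the map to be an isomorphism. Injectivity over a field is obtained by evaluating on special basis vectors: the separation trick of Theorem~\ref{thm:6.1.3}, using $y_k = e_1 \otimes \cdots \otimes e_1 \otimes e_2 \otimes e_1 \otimes \cdots$ and the ``ascending'' branch $i_t < i_{t+1}$ of the action (which produces the genuinely new term $e_{{\bf i}s}$), isolates the generators $U_s$; the same idea, applied along a diagram / monomial basis of $\Lambda$, isolates arbitrary basis elements. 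The main obstacle lies in the first stage: the relation $U_s U_{s'} U_s = U_s$ for adjacent $s, s'$ mixes all three branches of the action across three consecutive tensor slots, and is the only place where the value $\delta = -u - u^{-1}$ enters substantively; it has to be balanced carefully against the $u - u^{-1}$ cross-terms produced by the descending branch. Everything else is essentially forced once the $q$-Schur--Weyl surjectivity step is in hand.
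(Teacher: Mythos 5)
Your proposal is correct and follows essentially the same route as the paper: the action formula is read off from $\Phi(\wT_s)=U_s+u$, the factorisation is the same computation in disguise (since $(\wT_i-u)(\wT_{i+1}-u)(\wT_i-u)=x_i+(\wT_i-u)$ in $H$, checking the relation $U_sU_{s'}U_s=U_s$ on the operators is exactly checking that the $x_i$ of Theorem~\ref{thm:6.2.2} annihilate $V^{\otimes d}$), and the surjectivity and injectivity arguments (transporting the classical triangle through $\End_{S_{K,q}(d,d)}((K^d)^{\otimes d})^{op}$, base change plus Nakayama, and evaluation on the vectors $y_k$) are precisely those the paper imports from Theorem~\ref{thm:6.1.3}. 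Your extra care on injectivity (extending the separation trick beyond linear combinations of the generators, or invoking the Catalan rank count) is a reasonable strengthening of the paper's brief remark, not a departure from its method.
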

\begin{proof}The first statement follows by checking that the elements $x_i$ act as zero on $V^{\otimes d}$.
	
	The element $\wT_i-u$ acts exactly as the action of $U_i$ in $V^{\otimes d}$, so the canonical map \linebreak${H_{R, q}(d)\rightarrow \End_{S_{R, q}(2, d)}(V^{\otimes d})^{op}}$ factors through $\Lambda$, that is, there is an algebra homomorphism $\La \to {\rm End}_{S_{R, q}(2, d)}(V^{\otimes d})^{op}.$ The same argument as the one given in Theorem \ref{thm:6.1.3} works in this case replacing the Schur algebra by the $q$-Schur algebra and the group algebra of the symmetric group by the Iwahori-Hecke algebra. The injectivity follows again by considering the action of the elements in $\Lambda$, acting as zero on $V^{\otimes d}$, on the elements $y_k$ defined in the exactly same way as in Theorem \ref{thm:6.1.3}.
\end{proof}
Theorem \ref{thm:6.2.3} places $V^{\otimes d}$ in a central position in the representation theory of Temperley-Lieb algebras where it plays a role similar to that played by $(R^n)^{\otimes d}$ in the study the representation theory of symmetric groups via Schur algebras. In fact, Theorem 8.1.5 of \citep{Cr2} specializes to the following.

	\begin{Cor} \label{cor:6.2.4} Let $K$ be a field and fix $q=u^{-2}$ for some element $u\in K^\times$. Let $T$ be a characteristic tilting module of 
		$S$ and let $R(S)$ be the Ringel 
		dual of $S:=S_{K, q}(2, d)$ over a field $K$. 
		Then, $(R(S), \Hom_S(T, V^{\otimes d})$ is a $(V^{\otimes d}\domdim_S T-2)$-$\mathcal{F}(\St_{R(S)})$ quasi-hereditary cover of 
		$TL_{K, d}(-u-u^{-1})$, where $\St_{R(S)}$ denotes the set of standard modules over $R(S)$.
		Moreover, the following assertions hold:
		\begin{enumerate}[(i)]
			\item If $q+1\neq 0$ or $d$ is odd, then $TL_{K, d}(-u-u^{-1})$ is the Ringel dual of $S_{K, q}(2, d)$, and in particular, it is a split quasi-hereditary algebra over $K$;
			\item If $q+1=0$ and $d$ is even, then $(R(S), \Hom_S(T, V^{\otimes d})$ is a $(\frac{d}{2}-2)$-$\mathcal{F}(\St_{R(S)})$ quasi-hereditary cover of 
			$TL_{K, d}(0)$ and $\HN_F \mathcal{F}(\St_{R(S)})=\frac{d}{2}-2$. In particular,
the Schur functor $F:=\Hom_{R(S)}(\Hom_S(T, V^{\otimes d}), - )\colon R(S_{K, q}(2, d))\m\rightarrow TL_{K, d}(0)\m$ induces bijections
			\begin{equation*}
				\Ext_{R(S)}^i(M, N)\simeq \Ext_{TL_{K, d}(0)}^i(FM, FN), \quad \forall M, N\in \mathcal{F}(\St_{R(S)}), \quad  0\leq i\leq \frac{d}{2}-2.
			\end{equation*}
		\end{enumerate}
	\end{Cor}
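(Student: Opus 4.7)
The plan is to derive Corollary \ref{cor:6.2.4} as a specialisation of Theorem \ref{thm:2.5.1} to the $q$-Schur algebra $S=S_{K,q}(2,d)$ and the partial tilting module $Q=V^{\otimes d}$. The key structural input is Theorem \ref{thm:6.2.3}, which supplies an algebra isomorphism $TL_{K,d}(-u-u^{-1})\cong \End_S(V^{\otimes d})^{op}$, identifying the endomorphism algebra appearing in the general cover theorem with the Temperley-Lieb algebra of interest. Since $\Hom_S(T,-)$ is fully faithful on $\add T\supseteq \add V^{\otimes d}$, this algebra also coincides with $\End_{R(S)}(\Hom_S(T, V^{\otimes d}))^{op}$.

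For the main assertion, I would first apply Proposition \ref{dominantandcodominantoftiltingcomparison}(iii): the $q$-Schur algebra $S$ admits a simple preserving duality and $V^{\otimes d}\in \add T$, so
\begin{equation*}
V^{\otimes d}\codomdim_S T \;=\; V^{\otimes d}\domdim_S T.
\end{equation*}
Provided this common value $n$ is at least $2$, Theorem \ref{thm:2.5.1}(b) yields directly that $(R(S), \Hom_S(T, V^{\otimes d}))$ is an $(n-2)$-$\mathcal{F}(\St_{R(S)})$ split quasi-hereditary cover of $TL_{K,d}(-u-u^{-1})$, which is the first statement of the corollary.

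For part (i), when $q+1\neq 0$ or $d$ is odd, the $q$-analogue of Lemma \ref{lem:4.0.1} (recorded in the Remark opening Subsection 5.2) shows that $V^{\otimes d}$ is itself a characteristic tilting module, so $\add V^{\otimes d}=\add T$ and $V^{\otimes d}\domdim_S T=+\infty$. Consequently $\End_S(V^{\otimes d})^{op}$ is a realisation of the Ringel dual $R(S)$, and combined with Theorem \ref{thm:6.2.3} this gives $TL_{K,d}(-u-u^{-1})\cong R(S)$; being the Ringel dual of a split quasi-hereditary algebra, it is itself split quasi-hereditary. For part (ii), when $q+1=0$ and $d$ is even, Theorem \ref{thm:5.0.7} evaluates $V^{\otimes d}\domdim_S T=d/2$, and feeding $n=d/2$ into the general claim produces a $(d/2-2)$-$\mathcal{F}(\St_{R(S)})$ cover. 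To identify the Hemmer-Nakano dimension exactly as $d/2-2$, I would invoke the only-if direction of Theorem \ref{thm:2.5.1}(b): improvement to an $(d/2-1)$-$\mathcal{F}(\St_{R(S)})$ cover would force $V^{\otimes d}\codomdim_S T\geq d/2+1$, contradicting Theorem \ref{thm:5.0.7}. The Ext-bijection statement for $0\leq i\leq d/2-2$ is then precisely the content of being a $(d/2-2)$-$\mathcal{F}(\St_{R(S)})$ cover.

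The main obstacle is essentially bookkeeping rather than conceptual: one must carefully match the algebras on both sides of Theorem \ref{thm:2.5.1} (tracking opposite algebras and the identification of $\End_{R(S)}(\Hom_S(T, V^{\otimes d}))^{op}$ with $\End_S(V^{\otimes d})^{op}$) and treat the small-$d$ boundary cases in (ii), where $d/2-2$ becomes negative; for $d=2$ the algebra $TL_{K,2}(0)$ is itself Frobenius (isomorphic to $K[x]/(x^2)$) and the Hemmer-Nakano statement either becomes vacuous or should be interpreted with the convention $\HN_F \mathcal{F}(\St_{R(S)})=-1$ from Subsection 2.5.
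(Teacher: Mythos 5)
Your proposal is correct and follows essentially the same route as the paper: the paper's proof is a one-line citation of Theorem \ref{thm:5.0.7} together with Theorems 8.1.5 and 6.0.1 of \cite{Cr2}, and your argument simply unpacks that citation using the in-paper versions of the same machinery (Theorem \ref{thm:2.5.1} with $Q=V^{\otimes d}$, Proposition \ref{dominantandcodominantoftiltingcomparison}(iii) to pass between dominant and codominant dimension, and Theorem \ref{thm:6.2.3} to identify $\End_S(V^{\otimes d})^{op}$ with the Temperley-Lieb algebra). Your handling of the exactness of the Hemmer-Nakano dimension via the only-if direction of Theorem \ref{thm:2.5.1}(b), and your flagging of the $d=2$ boundary case (which the paper sidesteps by deferring to the cited external theorem), are both appropriate.
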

\begin{proof}
	The result follows from Theorem \ref{thm:5.0.7} and \citep[Theorem 8.1.5.]{Cr2} and \citep[Theorem 6.0.1]{Cr2}.
\end{proof}

\section{Uniqueness of the quasi-hereditary cover of $TL_{R, d}(\delta)$}\label{sec7}

In Corollary \ref{cor:6.2.4}, we construct a quasi-hereditary cover of $TL_{K, q}(\delta)$ using the Ringel dual of a $q$-Schur algebra. We will argue now that it is the best quasi-hereditary cover of $TL_{K, d}(\delta)$ if $d>2$. For that, going to the integral case is helpful. Assume that $R$ is a commutative Noetherian ring. Let $u$ be an invertible element of $R$ and fix $q=u^{-2}$. If $d=1, 2$ then the Temperley-Lieb algebra $TL_{R, q}(-u-u^{-1})$ coincides with the Iwahori-Hecke algebra $H_{R, q}(d)$ and so this case was dealt in \citep[Subsection 7.2]{Cr1}.

Assume from now on that $d>2$. Combining Theorem \ref{thm:6.2.3} with Theorem 8.1.5 of \citep{Cr2} we obtain the following:

\begin{Cor} \label{cor:7.1}
	Let $R$ be a commutative Noetherian ring. Fix an element $u\in R^\times$ and $q=u^{-2}$. 
	Let $T$ be a characteristic tilting module of $(S_{R, q}(2, d), \{\St(\l)_{\l\in \L^+(2, d)} \})$. Denote by $R(S)$ the Ringel dual of $(S_{R, q}(2, d), \{\St(\l)_{\l\in \L^+(2, d)} \})$, that is, $R(S)=\End_{S_{R, q}(2, d)}(T)^{op}$. 
	
	Then, $(R(S), \Hom_{S_{R, q}(2, d)}(T, V^{\otimes d}) )$ is a $(V^{\otimes d}\domdim_{S_{R, q}(2, d), R} T-2)-\mathcal{F}(\Stsim_{R(S)})$ split quasi-hereditary cover of $TL_{R, d}(-u-u^{-1})$.
\end{Cor}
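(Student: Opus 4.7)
The plan is to recognize this corollary as a direct application of the general machinery for building quasi-hereditary covers from Ringel duals (namely Theorem 8.1.5 of \cite{Cr2}, which is an integral version of Theorem \ref{thm:2.5.1} phrased in terms of relative dominant dimension) combined with the identification of the Temperley-Lieb algebra as the centraliser of $S_{R,q}(2,d)$ on $V^{\otimes d}$ obtained in Theorem \ref{thm:6.2.3}. So there is essentially nothing new to prove beyond assembling the pieces.

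First I would verify that the setup satisfies the hypotheses of that general cover theorem. Since $(S_{R,q}(2,d), \{\St(\l)\}_{\l \in \L^+(2,d)})$ is a split quasi-hereditary $R$-algebra, $T$ is a characteristic tilting module by assumption, and $V^{\otimes d}$ is a partial tilting module over $S_{R,q}(2,d)$ (it decomposes as a sum of indecomposable tilting modules $T(\l)$ indexed by the appropriate partitions, and in the integral setting this is preserved by base change via the projectivity hypothesis), the module $Q := V^{\otimes d}$ is an admissible choice for the theorem. Note also that $DQ \otimes_A Q \in R\proj$ is automatic in this context (see \citep[A.4.3]{Cr1}).

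Next, by Theorem \ref{thm:6.2.3} the endomorphism algebra $\End_{S_{R,q}(2,d)}(V^{\otimes d})^{op}$ is isomorphic to $TL_{R,d}(-u-u^{-1})$, which supplies the identification of the base algebra in the cover. Applying Theorem 8.1.5 of \cite{Cr2} (with $A = S_{R,q}(2,d)$, $T$ the characteristic tilting module, and $Q = V^{\otimes d}$) then produces $(R(S), \Hom_{S_{R,q}(2,d)}(T, V^{\otimes d}))$ as an $(n-2)$-$\mathcal{F}(\Stsim_{R(S)})$ split quasi-hereditary cover of $\End_{S_{R,q}(2,d)}(V^{\otimes d})^{op} \cong TL_{R,d}(-u-u^{-1})$, where $n = V^{\otimes d}\domdim_{S_{R,q}(2,d), R} T$.

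The only potential subtlety is that Theorem \ref{thm:2.5.1}(a) in the body of the paper is phrased in terms of relative codominant dimension, whereas the statement of Corollary 7.1 uses relative dominant dimension. The discrepancy is reconciled by the fact that $S_{R,q}(2,d)$ carries a simple preserving duality fixing the partial tilting module $V^{\otimes d}$, so that Proposition \ref{dominantandcodominantoftiltingcomparison}(iii), applied at each residue field and lifted via the base-change machinery developed in \cite{Cr1}, yields the equality
\begin{equation*}
V^{\otimes d}\domdim_{(S_{R,q}(2,d), R)} T = V^{\otimes d}\codomdim_{(S_{R,q}(2,d), R)} T.
\end{equation*}
This step of transferring the codominant/dominant interchange from fields to the integral ground ring $R$ is the only nontrivial point; but it is precisely the content built into the integral version Theorem 8.1.5 of \cite{Cr2}, so invoking that statement directly bypasses the issue.
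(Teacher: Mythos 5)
Your proposal is correct and follows exactly the paper's route: the paper derives Corollary \ref{cor:7.1} in one line by combining Theorem \ref{thm:6.2.3} (identifying $\End_{S_{R,q}(2,d)}(V^{\otimes d})^{op}$ with $TL_{R,d}(-u-u^{-1})$) with Theorem 8.1.5 of \cite{Cr2}. Your additional remarks on verifying the hypotheses and on the dominant/codominant reconciliation are sound but not needed beyond what the cited integral theorem already packages.
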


In the following, we will write $R(S)$ to denote the Ringel dual $\End_{S_{R, q}(2, d)}(T)^{op}$. 
Denote by $F_{R, q}$ the Schur functor associated with the quasi-hereditary cover constructed in Corollary \ref{cor:7.1}. The aim now is to compute $\HN_{F_{R, q}}\mathcal{F}(\Stsim_{R(S)})$ and in particular to determine $V^{\otimes d}\domdim_{S_{R, q}(2, d), R} T$ in terms of the ground ring $R$.

\begin{Theorem}\label{thm:7.2}
	Let $R$ be a commutative Noetherian ring. Fix an element $u\in R^\times$ and $q=u^{-2}$.  Let $T$ be a characteristic tilting module of $(S_{R, q}(2, d), \{\St(\l)_{\l\in \L^+(2, d)} \})$. Then,
	\begin{align*}
		V^{\otimes d}\domdim_{(S_{R, q}(2, d), R)} T=\begin{cases}
			\dfrac{d}{2}, & \text{ if } 1+q\notin R^\times \text{ and } d \text{ is even}\\
			+\infty, & \text{ otherwise}
		\end{cases}.
	\end{align*}
\end{Theorem}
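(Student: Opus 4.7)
The plan is to reduce the integral statement to the field-case result (Theorem \ref{thm:5.0.7}) by base change to residue fields of $R$. Both ingredients of our setup are compatible with such base change: the split quasi-hereditary structure of $S_{R, q}(2, d)$ localises to $(S_{K(\mi), \bar q}(2, d), \{K(\mi)\otimes_R \St(\l)\})$ at each maximal ideal $\mi$ with residue field $K(\mi) = R/\mi$ and $\bar q$ the image of $q$, a characteristic tilting module $T$ specialises to a characteristic tilting module over $S_{K(\mi), \bar q}(2, d)$, and the tensor space $V^{\otimes d}$ commutes with $-\otimes_R K(\mi)$ since it is the $d$-th tensor power of a free $R$-module.

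First, I would invoke the reduction machinery of \cite{Cr1} for split quasi-hereditary covers and relative dominant dimension with respect to a summand of a characteristic tilting module in the integral setup. The outcome is an equality
\begin{equation*}
V^{\otimes d}\domdim_{(S_{R, q}(2, d), R)} T \;=\; \inf_{\mi \in \MaxSpec R} V^{\otimes d}\domdim_{S_{K(\mi), \bar q}(2, d)} \bigl(K(\mi) \otimes_R T\bigr),
\end{equation*}
so the integral invariant is controlled pointwise. Each term on the right is computed by Theorem \ref{thm:5.0.7}: it equals $d/2$ when $d$ is even and $1 + \bar q = 0$ in $K(\mi)$, and equals $+\infty$ otherwise. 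The crucial arithmetic observation is that $1 + \bar q = 0$ in $K(\mi)$ is equivalent to $1 + q \in \mi$.

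To conclude, I would analyse three cases. If $d$ is odd, every residue-field value is $+\infty$ by Theorem \ref{thm:5.0.7}, so the infimum is $+\infty$. If $d$ is even and $1 + q \in R^\times$, then $1 + q$ is contained in no maximal ideal, so the value at every residue field is again $+\infty$ and the infimum is $+\infty$. Finally, if $d$ is even and $1 + q \notin R^\times$, some maximal ideal contains $1 + q$, giving value $d/2$ at that point, while at the remaining maximal ideals the value is either $d/2$ or $+\infty$; hence the infimum is exactly $d/2$, as claimed. The main obstacle is securing the base-change formula displayed above: the subtlety lies in verifying that the defining $(A, R)$-exact sequences witnessing relative dominant dimension can be chosen compatibly with specialisation, but this is precisely the content of the integral-to-field reduction lemmas in \cite{Cr1} applied to the partial tilting summand $V^{\otimes d}$ of $T$.
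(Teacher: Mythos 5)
Your proposal is correct and follows essentially the same route as the paper: the paper's proof also reduces to the field case of Theorem \ref{thm:5.0.7} via the base-change property of $S_{R,q}(2,d)$ and the integral-to-field reduction results of \cite{Cr1} and \citep[Theorem 3.2.5]{Cr2}, which give exactly the infimum-over-residue-fields formula you display. Your explicit case analysis (in particular the observation that $1+\bar q=0$ in $R/\mi$ iff $1+q\in\mi$, and that a non-unit lies in some maximal ideal) is the content the paper leaves implicit.
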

\begin{proof}Since $S_{R, q}(2, d)$ has the base change property: $S\otimes_R S_{R, q}(2, d)\simeq S_{S, 1_S\otimes q}(2, d)$ as $S$-algebras for every commutative ring $S$ which is an $R$-algebra and the standard modules of $S_{S, 1_S\otimes q}(2, d)$ are of the form $S\otimes_R \St(\l)$, $\l\in \L^+(n, d)$ (see for example \citep[Subsection 3.3, Section 5]{cruz2021cellular}), the result follows from Theorem \ref{thm:5.0.7}, \citep[Propositions A.4.7, A.4.3.]{Cr1} and \citep[Theorem 3.2.5.]{Cr2}.
\end{proof}

\subsection{Hemmer-Nakano dimension of $\mathcal{F}(\Stsim_{R(S)})$} Similarly to the classical case (see also \cite{Cr1}), there are two cases to be considered.  \label{sec7.1}

Following \cite{Cr1}, the commutative Noetherian ring $R$ is called \emph{$2$-partially $q$-divisible} if $1+q\in R^\times$ or $1+q=0$.
\begin{Theorem}
		Let $R$ be a local regular $2$-partially $q$-divisible (commutative Noetherian) ring, where $q=u^{-2}$, $u\in R^\times$.  Let $T$ is a characteristic tilting module of $S_{R, q}(n, d)$. Then,
		\begin{align}
		\HN_{F_{R, q}} \mathcal{F}(\Stsim_{R(S)})=V^{\otimes d}\domdim_{(S_{R, q}(2, d), R)} T -2.
		\end{align}
\end{Theorem}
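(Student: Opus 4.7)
The plan is to establish the two inequalities separately. Corollary \ref{cor:7.1} already yields the lower bound
\[
\HN_{F_{R,q}}\mathcal{F}(\Stsim_{R(S)}) \geq V^{\otimes d}\domdim_{(S_{R,q}(2,d),R)} T - 2,
\]
so the substantive work is the reverse inequality. First I would invoke Theorem \ref{thm:7.2} to make the numerical value explicit under the $2$-partially $q$-divisible hypothesis: the right-hand side is either $+\infty$ (precisely when $1+q\in R^\times$, or when $d$ is odd) or equals $d/2$ (when $1+q=0$ and $d$ is even). In the infinite case there is nothing further to prove, since by the $q$-analogue of Lemma \ref{lem:4.0.1} the module $V^{\otimes d}$ is then itself a characteristic tilting module, so $R(S)$ is Morita equivalent to $\End_{S_{R,q}(2,d)}(V^{\otimes d})^{op}\cong TL_{R,d}(-u-u^{-1})$ (by Theorem \ref{thm:6.2.3}), making $F_{R,q}$ an equivalence of categories.

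The remaining case is $1+q=0$ with $d$ even, where the target value is $d/2-2$. To produce an upper bound, I would pass to the (algebraically closed) residue field. Since $R$ is local regular, let $k$ denote an algebraic closure of the residue field $R/\mathfrak{m}$; the $2$-partially $q$-divisible condition descends ($1+q=0$ in $R$ implies $1+q=0$ in $k$). The split quasi-hereditary structure of $(S_{R,q}(2,d),\{\St(\lambda)\}_{\lambda})$ and the partial tilting module $V^{\otimes d}$ both enjoy the standard base change compatibility (as recalled in the proof of Theorem \ref{thm:7.2} and in \citep[Section 5]{cruz2021cellular}), so after tensoring with $k$ one obtains $(S_{k,q}(2,d),\{\St(\lambda)_k\})$, its Ringel dual $R(S_k)$, and the Schur functor $F_{k,q}$. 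The reduction principle developed in \cite{Cr1}, which converts an integral $i$-$\mathcal{F}(\Stsim)$-cover into an $i$-$\mathcal{F}(\Stsim)$-cover over the residue field, yields
\[
\HN_{F_{R,q}}\mathcal{F}(\Stsim_{R(S)}) \leq \HN_{F_{k,q}}\mathcal{F}(\Stsim_{R(S_k)}).
\]

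Over the field $k$ the equivalence in Theorem \ref{thm:2.5.1}(b) applies: $V^{\otimes d}\codomdim_{S_{k,q}(2,d)} T_k \geq n \geq 2$ holds if and only if $(R(S_k),\Hom_{S_k}(T_k,V^{\otimes d}))$ is an $(n-2)$-$\mathcal{F}(\Stsim_{R(S_k)})$ cover. Together with Proposition \ref{dominantandcodominantoftiltingcomparison}(iii), which identifies relative dominant and relative codominant dimension for modules in $\add T_k$ under the simple preserving duality on $S_{k,q}(2,d)$, and Theorem \ref{thm:5.0.7}, this gives $\HN_{F_{k,q}}\mathcal{F}(\Stsim_{R(S_k)}) = V^{\otimes d}\domdim_{S_{k,q}(2,d)} T_k - 2 = d/2-2$. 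Combined with Theorem \ref{thm:7.2} (so that the fibre value matches the integral one), this matches the lower bound and concludes the proof.

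The main obstacle will be Step 2, namely setting up the reduction from the integral to the field case cleanly: one must verify that any $(i{+}1)$-$\mathcal{F}(\Stsim)$ cover structure over $R$ would force, after base change, an $(i{+}1)$-$\mathcal{F}(\Stsim)$ cover structure over $k$. This rests on the base-change behaviour of $\Ext$-groups between $\Stsim$-filtered modules (which are $R$-projective by construction) and on the preservation of the image of the Schur functor, both of which are the tools systematically developed in \cite{Cr1}. Once this is in place, the numerical conclusion transports back to $R$ without further issue.
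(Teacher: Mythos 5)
Your overall architecture (lower bound from Corollary \ref{cor:7.1}, dispose of the infinite case, then bound the finite case from above by passing to a field) matches the paper, and your field-level computation via Theorem \ref{thm:2.5.1}(b), Proposition \ref{dominantandcodominantoftiltingcomparison}(iii) and Theorem \ref{thm:5.0.7} is fine. The gap is in your Step 2: the inequality
$\HN_{F_{R,q}}\mathcal{F}(\Stsim_{R(S)}) \leq \HN_{F_{k,q}}\mathcal{F}(\Stsim_{R(S_k)})$
for $k$ the \emph{residue} field is not a consequence of the reduction machinery of \cite{Cr1}, and it is false as a general principle. Base change to $R/\mathfrak{m}$ is not flat, so an $i$-$\mathcal{F}(\Stsim)$ cover over $R$ need not specialise to an $i$-$\mathcal{F}(\Stsim)$ cover over the residue field; the comparison of $\Ext$-groups picks up Tor-correction terms. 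Theorem \ref{thm:7.1.2} of this very paper exhibits the failure: there $1+q$ lies in $\mathfrak{m}$ but is nonzero, the residue-field fibre has Hemmer--Nakano dimension $\frac{d}{2}-2$, yet the integral Hemmer--Nakano dimension is $\frac{d}{2}-1$. The results in \cite{Cr1} relating the integral invariant to quotients $R/\pri$ come with a shift of $+1$ (via heights of primes), not with the clean inequality you assert, so your argument as written cannot distinguish the $2$-partially $q$-divisible case from the case of Theorem \ref{thm:7.1.2} and would give the wrong answer in the latter.

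The repair is to base change to the \emph{quotient field} $Q(R)$ instead: localisation is flat and exact, so $\HN_{Q(R)\otimes_R F_{R,q}}\mathcal{F}(Q(R)\otimes_R\Stsim_{R(S)})\geq \HN_{F_{R,q}}\mathcal{F}(\Stsim_{R(S)})$ holds without further argument. This is exactly where the $2$-partially $q$-divisible hypothesis earns its keep: in the finite case $1+q\notin R^\times$ forces $1+q=0$ in $R$ itself (not merely in $R/\mathfrak{m}$), so the degeneration survives at the generic point, $V^{\otimes d}\domdim_{(S_{Q(R),q}(2,d),Q(R))}\,Q(R)\otimes_R T=\frac{d}{2}$, and \citep[Corollary 5.3.6]{Cr2} (equivalently Theorem \ref{thm:2.5.1}(b) over the field $Q(R)$) caps the generic-fibre Hemmer--Nakano dimension at $\frac{d}{2}-2$. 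This is the route the paper takes.
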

\begin{proof}
	By Corollary \ref{cor:7.1}, $\HN_{F_{R, q}} \mathcal{F}(\Stsim_{R(S)})\geq V^{\otimes d}\domdim_{(S_{R, q}(2, d), R)} T -2$.
	
	 If ${V^{\otimes d}\domdim_{(S_{R, q}(2, d), R)} T}=+\infty$, then $d$ is odd, and then there is nothing to prove. Assume that it is finite. By Theorem \ref{thm:7.2}, $V^{\otimes d}\domdim_{(S_{R, q}(2, d), R)} T=\frac{d}{2}$. In particular, $d$ is even and $1+q\notin R^\times$. Hence, $1+q$ must be zero. Therefore, $${Q(R)\otimes_R V^{\otimes d}\domdim_{(S_{Q(R), q}(2, d), Q(R))} Q(R)\otimes_R T}=\frac{d}{2},$$  where $Q(R)$ is a quotient field of $R$. 
	
	By  \citep[Corollary 5.3.6.]{Cr2}, $\HN_{Q(R)\otimes_R F_{R, q}} \mathcal{F}(Q(R)\otimes_R \St_{R(S)})$ cannot be higher than \linebreak ${V^{\otimes d}\domdim_{(S_{R, q}(2, d), R)} T-2}$. It follows that 
	\begin{align*}
	V^{\otimes d}\domdim_{(S_{R, q}(2, d), R)} T-2=	\HN_{Q(R)\otimes_R F_{R, q}} \mathcal{F}(Q(R)\otimes_R \St_{R(S)})\geq \HN_{F_{R, q}} \mathcal{F}(\Stsim_{R(S)}).
	\end{align*}
\end{proof}

\begin{Theorem}\label{thm:7.1.2}
	Let $R$ be a local regular commutative Noetherian ring which is not a $2$-partially $q$-divisible commutative ring, where $q=u^{-2}$, $u\in R^\times$.  Let $T$ is a characteristic tilting module of $S_{R, q}(n, d)$. Then,
	\begin{align}
		\HN_{F_{R, q}} \mathcal{F}(\Stsim_{R(S)})=V^{\otimes d}\domdim_{(S_{R, q}(2, d), R)} T -1.
	\end{align}
\end{Theorem}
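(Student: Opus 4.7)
Plan. If $d$ is odd, Theorem \ref{thm:7.2} yields $V^{\otimes d}\domdim_{(S_{R,q}(2,d),R)} T = +\infty$, and Corollary \ref{cor:7.1} then gives $\HN_{F_{R,q}}\mathcal{F}(\Stsim_{R(S)}) = +\infty$, which matches the claim. Assume henceforth that $d$ is even, so that Theorem \ref{thm:7.2} gives $V^{\otimes d}\domdim_{(S_{R,q}(2,d),R)} T = d/2$, and the baseline from Corollary \ref{cor:7.1} already produces $\HN_{F_{R,q}}\mathcal{F}(\Stsim_{R(S)}) \geq d/2-2$. The goal is to promote this to an equality at $d/2-1$.

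The strategy is a base-change analysis across the triple $(R, Q(R), k)$ where $k=R/\mi$ is the residue field and $Q(R)$ is the quotient field. Since $R$ is not $2$-partially $q$-divisible, $1+q$ lies in $\mi\setminus\{0\}$, so $1+\bar q = 0$ in $k$ while $1+q$ becomes a unit in $Q(R)$. Over $k$, which is $2$-partially $\bar q$-divisible, the preceding theorem applies and yields $\HN_{F_{k,\bar q}}\mathcal{F}(\Stsim_{R(S_k)}) = V^{\otimes d}\domdim_{(S_{k,\bar q}(2,d),k)} T - 2 = d/2 - 2$. Over $Q(R)$, Theorem \ref{thm:7.2} gives $V^{\otimes d}\domdim_{Q(R)} T = +\infty$, so Corollary \ref{cor:7.1} produces a perfect cover with $\HN_{F_{Q(R),q}} = +\infty$.

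For the upper bound $\HN_{F_{R,q}}\mathcal{F}(\Stsim_{R(S)}) \leq d/2 - 1$, I would invoke the general specialisation principle for Hemmer-Nakano dimensions developed in \cite{Cr1}: because the cover is defined by a projective Noetherian $R$-algebra whose standard modules are compatible with ring extensions (see \cite{cruz2021cellular} and \cite[Subsection 2.2]{Cr2}), and because every quasi-hereditary structure is preserved by base change, the HN dimension over $R$ cannot exceed the HN dimension over the residue field by more than one. The residue-field computation $\HN_{F_{k,\bar q}} = d/2-2$ then yields the desired upper bound. For the lower bound $\HN_{F_{R,q}} \geq d/2-1$, the task is to show that $F_{R,q}$ induces bijections on $\Ext^{d/2-1}_{R(S)}(M, N) \to \Ext^{d/2-1}_{TL_{R,d}(-u-u^{-1})}(F_{R,q}M, F_{R,q}N)$ for every pair $M, N \in \mathcal{F}(\Stsim_{R(S)})$. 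Tensoring with $Q(R)$ turns this map into an isomorphism, since the $Q(R)$-cover is perfect, and both Ext-groups are finitely generated $R$-modules; thus the kernel and cokernel are $R$-torsion. A refined analysis using the regularity of $R$, the fact that $1+q$ is a non-zerodivisor in $R$, and the filtration multiplicity structure of modules in $\mathcal{F}(\Stsim_{R(S)})$ should show that this torsion vanishes at degree $d/2-1$.

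The main obstacle is this last step: showing that the kernel and cokernel of the Ext-comparison map at degree exactly $d/2-1$ genuinely vanish, rather than being merely torsion. This should be carried out by an inductive argument on $\Stsim$-filtration length combined with the base-change compatibility of standard and tilting modules over $R(S_{R,q}(2,d))$, along the same template used for integral Iwahori-Hecke algebras in \cite[Section 7]{Cr1}. The dichotomy with the $2$-partially $q$-divisible case is precisely that here $1+q$ is a non-zero element of $R$, which supplies the additional regularity needed to trade the generic isomorphism (over $Q(R)$) against the residue-field defect (over $k$), yielding the precise gain of one degree in Hemmer-Nakano dimension.
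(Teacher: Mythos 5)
Your overall architecture (play the generic fibre $Q(R)$, where the cover is perfect because $1+q$ becomes a unit, against a special fibre where $1+q$ vanishes) is the right one and matches the paper's, but both halves of your argument have genuine gaps. For the lower bound $\HN_{F_{R,q}}\mathcal{F}(\Stsim_{R(S)})\geq \frac{d}{2}-1$, you reduce to showing that the kernel and cokernel of the comparison map on $\Ext^{d/2-1}$ are not merely $R$-torsion but zero, and you explicitly defer this ("a refined analysis \dots should show that this torsion vanishes"). That is precisely the hard content of the step, and it is not something one can wave at: the paper does not redo it by hand but invokes the general lifting theorem \citep[Theorem 5.0.9]{Cr1}, whose inputs are exactly the two facts you have assembled, namely that the cover survives base change to the residue field ($\HN_{F_{R(\mi),q_\mi}}\geq 0$, via Corollary \ref{cor:7.1} and \citep[Theorem 3.2.5]{Cr2}) and that $\HN_{Q(R)\otimes_R F_{R,q}}=+\infty$. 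Without citing or reproving that theorem, your lower bound is unproved.

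The upper bound is where your route actually goes wrong. You assert a "specialisation principle" of the form $\HN_{F_{R,q}}\leq \HN_{F_{k,\bar q}}+1$ with $k=R/\mi$ the residue field of the \emph{maximal} ideal. No such result is available: the comparison theorems in \cite{Cr1} that lose only one degree are stated for residue fields at \emph{height-one} primes (the proofs go through the DVR $R_\pri$), and for a regular local ring of Krull dimension $\geq 2$ (which is the case of interest, e.g.\ localisations of $\mathbb{Z}[x,x^{-1}]$) the maximal ideal is not of height one; iterating the one-step comparison would only give $\HN_{F_{R,q}}\leq \HN_{F_{k,\bar q}}+\height(\mi)$, which is too weak. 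The paper's argument instead produces a height-one prime $\pri$ with $1+q\in\pri$ (this exists by Krull's principal ideal theorem precisely because $1+q$ is a nonzero nonunit, i.e.\ because $R$ is not $2$-partially $q$-divisible), notes that over $Q(R/\pri)$ the relative dominant dimension of $T$ equals $\frac{d}{2}$, and then applies \citep[Theorem 5.1.1]{Cr1} to cap $\HN_{F_{R,q}}$ at $\frac{d}{2}-1$. You need to replace your residue-field step by this height-one localisation argument (or prove the principle you invoke, which I do not believe holds in the stated generality). Finally, the case $d=2$ is not covered by your induction template and is handled in the paper by a separate citation of \citep[Theorem 7.2.7]{Cr1}.
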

\begin{proof}
	By Corollary \ref{cor:7.1}, if $V^{\otimes d}\domdim_{(S_{R, q}(2, d), R)} T$ is infinite, then there is nothing to show. So, assume that $V^{\otimes d}\domdim_{(S_{R, q}(2, d), R)} T$ is finite. By Theorem \ref{thm:7.2}, $d$ is even and $1+q\notin R^\times$. By assumption, $1+q\neq 0$, otherwise $R$ would be a $2$-partially $q$-divisible ring. It follows that $Q(R)\otimes_R V^{\otimes d}\domdim_{(Q(R)_\otimes R S_{R, q}(2, d), R)} Q(R)\otimes_R T$ is infinite by Theorem \ref{thm:7.2}. The result for $d=2$ follows from \citep[Theorem 7.2.7]{Cr1}. Assume that $d\geq 4$. By Corollary \ref{cor:7.1} and \citep[Theorem 3.2.5.]{Cr2}, $\HN_{F_{R(\mi), q_\mi}} \mathcal{F}(R(\mi)\otimes_R \St_{R(S)})\geq 0$ and ${\HN_{Q(R)\otimes_R F_{R, q}} \mathcal{F}(Q(R)\otimes_R \St_{R(S)})}=+\infty$, where $\mi$ is the unique maximal ideal of $R$, and $q_\mi$ is the image of $q$ in $R/\mi$.
By \citep[Theorem 5.0.9]{Cr1}, $\HN_{F_{R, q}} \mathcal{F}(\Stsim_{R(S)})\geq \frac{d}{2}-1$. The Hemmer-Nakano dimension cannot be higher because similarly to the proof of Theorem 7.2.7 of \cite{Cr1} there exists a prime ideal of height one $\pri$ such that $1+q\in \pri$. Hence, $Q(R/\pri)\otimes_R V^{\otimes d}\domdim_{S_{Q(R/\pri), q_\pri}(2, d), Q(R/\pri)}$ is exactly $\frac{d}{2}$, where $q_\pri$ denotes the image of $q$ in $R/\pri\subset Q(R/\pri)$. The result follows from  \citep[Theorem 5.1.1]{Cr1}.
\end{proof}

\subsection{Uniqueness}\label{Uniqueness}
In this part, assume that $R=\mathbb{Z}[x, x^{-1}]$ and fix $q=x^{-2}$. Assume that $d>2$. By \citep[Proposition 5.0.3]{Cr1} and Theorem \ref{thm:7.1.2}, $\HN_{F_{R, q}} \mathcal{F}(\Stsim_{R(S)})\geq \frac{d}{2}-1$. In particular, the Schur functor $F_{R, q}$ induces an exact equivalence 
\begin{align}
	\mathcal{F}(\Stsim_{R(S)})\rightarrow \mathcal{F}(F_{R, q}\Stsim_{R(S)}). \label{eq15}
\end{align}
\begin{Cor}\label{cor:7.2.1}
	$(R(S), \Hom_{S_{R, q}(2, d)}(T, V^{\otimes d}) )$  is the unique split quasi-hereditary cover of $TL_{R, d}(-x-x^{-1})$ satisfying the property (\ref{eq15}), where $T$ is a characteristic tilting module of $S_{R, q}(2, d)$ and $R(S)$ denotes the Ringel dual of $S_{R, q}(2, d)$. In particular, $TL_{R, d}(-x-x^{-1})$ is a split quasi-hereditary algebra over $R$ if and only if $d$ is odd.
\end{Cor}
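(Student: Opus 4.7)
The plan is to derive Corollary \ref{cor:7.2.1} in two stages: first establish uniqueness via the general criterion Corollary \ref{cor:2.5.2}, and then deduce the ``if and only if'' from Theorem \ref{thm:6.2.3}, the remark preceding Theorem \ref{thm:5.0.7}, and a base-change specialisation.

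First I would verify that $(R(S), \Hom_{S_{R, q}(2, d)}(T, V^{\otimes d}))$ actually satisfies (\ref{eq15}) strongly enough to trigger Corollary \ref{cor:2.5.2}. The ring $R = \mathbb{Z}[x, x^{-1}]$ is \emph{not} $2$-partially $q$-divisible, since $1+q = x^{-2}(x^2+1)$ is nonzero and $x^2+1$ is not a unit. Combining Theorem \ref{thm:7.1.2} with Theorem \ref{thm:7.2} then gives
\begin{equation*}
\HN_{F_{R, q}} \mathcal{F}(\Stsim_{R(S)}) = \begin{cases}+\infty, & d \text{ odd,} \\[1pt] \dfrac{d}{2} - 1, & d \text{ even,} \end{cases}
\end{equation*}
and the assumption $d > 2$ ensures that this value is at least $1$ in both cases. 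In particular, the cover is a $1$-$\mathcal{F}(\Stsim_{R(S)})$ cover and $F_{R, q}$ yields the exact equivalence (\ref{eq15}).

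Next, suppose $(A', P')$ is a second split quasi-hereditary cover of $TL_{R, d}(-x-x^{-1})$ whose Schur functor induces an exact equivalence $\mathcal{F}(\Stsim_{A'}) \to \mathcal{F}(F_{R, q}\Stsim_{R(S)})$. Such an equivalence preserves Ext groups in every degree, so $(A', P')$ is automatically a $1$-$\mathcal{F}(\Stsim_{A'})$ cover. Interpreting ``satisfying (\ref{eq15})'' as equating the image subcategory $\mathcal{F}(F_{A',P'}\Stsim_{A'})$ with $\mathcal{F}(F_{R,q}\Stsim_{R(S)})$, the identity functor $L = \id_{TL_{R, d}(-x-x^{-1})\m}$ restricts trivially to an exact equivalence between the two, and Corollary \ref{cor:2.5.2} delivers equivalence of the covers.

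For the ``if and only if'' clause, the two directions are handled separately. If $d$ is odd, the $q$-analogue of Lemma \ref{lem:4.0.1} (recorded in the remark before Theorem \ref{thm:5.0.7}) together with the base change property of $S_{R, q}(2, d)$ promotes the field-level statement to the integral setting, showing that $V^{\otimes d}$ is a characteristic tilting module over $S_{R, q}(2, d)$; then $\Hom_{S_{R, q}(2, d)}(T, V^{\otimes d})$ is a progenerator of $R(S)$ and Theorem \ref{thm:6.2.3} identifies $R(S)$ with $TL_{R, d}(-x-x^{-1})$ up to Morita equivalence, so the split quasi-hereditary structure on $R(S)$ transfers. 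For $d$ even I would argue by contradiction: if $TL_{R, d}(-x-x^{-1})$ were split quasi-hereditary over $R$, then specialising along any ring map $R \to K$ to a field with $v := $ image of $x$ satisfying $v^2 = -1$ (so that $\delta = -v-v^{-1} = 0$) would make $TL_{K, d}(0)$ split quasi-hereditary over $K$ by base change, hence of finite global dimension; this contradicts the classical fact that $TL_{K, d}(0)$ has infinite global dimension for $d$ even.

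The main obstacle I anticipate is the precise reading of ``satisfying (\ref{eq15})'' for a competing cover: Corollary \ref{cor:2.5.2} genuinely requires an exact equivalence $L$ between the two subcategories in $TL_{R, d}(-x-x^{-1})\m$, which in practice demands that the two images $\mathcal{F}(F_{A', P'}\Stsim_{A'})$ and $\mathcal{F}(F_{R, q}\Stsim_{R(S)})$ coincide; this must be folded into the formulation of the hypothesis so that $L = \id$ works. A secondary (but purely technical) point is the integral-to-field base change needed in the $d$ odd case to promote the remark-level statement about $V^{\otimes d}$ being characteristic tilting, for which the machinery of \cite{Cr1, Cr2, cruz2021cellular} does the work.
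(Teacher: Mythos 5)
Your argument for the uniqueness statement is essentially the paper's: establish $\HN_{F_{R,q}}\mathcal{F}(\Stsim_{R(S)})\geq 1$ (using $d>2$) and then invoke Corollary \ref{cor:2.5.2} with $L=\id$. One genuine gap here: Theorem \ref{thm:7.1.2} is stated for \emph{local} regular rings, and $\mathbb{Z}[x,x^{-1}]$ is not local, so you cannot ``combine Theorem \ref{thm:7.1.2} with Theorem \ref{thm:7.2}'' directly. The paper bridges exactly this point by additionally citing \citep[Proposition 5.0.3]{Cr1}, a local-to-global statement for Hemmer--Nakano dimensions; without some such globalisation step your computation of $\HN_{F_{R,q}}\mathcal{F}(\Stsim_{R(S)})$ over $\mathbb{Z}[x,x^{-1}]$ is not justified. (For the purposes of this corollary one only needs the lower bound $\geq \frac{d}{2}-1\geq 1$, which is what the paragraph preceding the corollary in the paper records.)

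For the ``if and only if'' clause your route differs from the paper's. The paper deduces both directions in one stroke from Theorem \ref{thm:7.2} together with the general criterion \citep[Theorem 6.0.1]{Cr2} (equivalently \citep[Proposition A.4.7]{Cr1}): $\End_{S}(V^{\otimes d})^{op}$ is split quasi-hereditary precisely when the relative dominant dimension of $T$ with respect to $V^{\otimes d}$ is infinite, which by Theorem \ref{thm:7.2} happens over $\mathbb{Z}[x,x^{-1}]$ exactly when $d$ is odd (since $1+q=x^{-2}(x^2+1)$ is a nonzero nonunit). Your argument for $d$ odd (promote $V^{\otimes d}$ to an integral characteristic tilting module and identify $TL$ with the Ringel dual up to Morita equivalence) is workable but requires more care than you indicate, since ``$\add V^{\otimes d}=\add T$ integrally'' does not follow formally from the residue-field statements; the infinite-relative-dominant-dimension formulation avoids this. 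Your argument for $d$ even imports the external fact that $TL_{K,d}(0)$ has infinite global dimension (true, e.g.\ by Westbury's classification or by the K\"onig--Xi criterion for cellular algebras, combined with base change of split quasi-heredity to residue fields); this is a legitimate alternative, but it trades the paper's self-contained use of Theorem \ref{thm:7.2} for a citation outside the machinery developed here, so you should state and source that fact explicitly if you take this route.
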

\begin{proof}
	The first statement follows from Corollary \ref{cor:2.5.2} together with \citep[Proposition 5.0.3]{Cr1} and Theorem \ref{thm:7.1.2}. For the second statement see for example \citep[Proposition A.4.7.]{Cr1} or \citep[Theorem 6.0.1]{Cr2} together with Theorem \ref{thm:7.2}).
\end{proof}

As a consequence, when $d$ is odd, the Temperley-Lieb algebra $TL_{R, d}(-x-x^{-1})$ is exactly a Ringel dual of $S_{\mathbb{Z}[x, x^{-1}], q}(2, d)$.

\section*{Acknowledgements}
Part of the collaboration was done during a research visit of the second-named author to the University of Stuttgart,  and the authors are grateful for the support given. The authors would like to thank Steffen Koenig for his comments on an earlier version of this manuscript.

\enlargethispage{1\baselineskip}

\interlinepenalty=1000000

\bibliographystyle{alphaurl}
\bibliography{ref}

\end{document}